\newcommand{\Adresses}{{
\bigskip
\footnotesize

Octave Curmi, AIX MARSEILLE UNIVERSIT\'E, CNRS, CENTRALE MARSEILLE, I2M, FRANCE.

\emph{E-mail address: }octave.curmi@univ-amu.fr
}}
\DeclareMathAlphabet{\pazocal}{OMS}{zplm}{m}{n}
\let\mathcal\pazocal
\let\pazocal\temp
\newcommand\C{\mathbb{C}}
\newcommand\R{\mathbb{R}}
\newcommand\Z{\mathbb{Z}}
\newcommand\N{\mathbb{N}}
\newcommand\Sp{\mathbb{S}}
\newtheorem*{theo}{Theorem}
\newtheorem{theorem}{Theorem}[section]
\newtheorem{lemma}[theorem]{Lemma}
\newtheorem{prop}[theorem]{Proposition}
\newtheorem{cor}[theorem]{Corollary}
\newtheorem{defn}[theorem]{Definition}
\newtheorem{rk}[theorem]{Remark}
\newtheorem{ex}[theorem]{Example}
\newtheorem{nota}[theorem]{Notation}
\newtheorem{thm}[theorem]{Theorem}
\newtheorem{discussion}[theorem]{Discussion}
\newtheorem{conditionk}[theorem]{Condition}
\newcommand\Dt{\mathbb{D}}
\newcommand\Df{\mathbb{D}_{f}}
\newcommand\Dfex{{\mathbb{D}_{f,ex}}}
\newcommand\Do{\mathbb{D}_{0}}%diviseur de la modif de X
\newcommand\Ex{E}
\newcommand\Vf{V(f)}
\newcommand\Vg{V(g)}
\newcommand\Vft{\widetilde{V(f)}}
\newcommand\Vgt{\widetilde{V(g)}}
\newcommand\Dg{\widetilde{V(g)}}
\newcommand\V{V_{f\cdot g}} %% l'union des surfaces Vf et Vg
\newcommand\Ft{\widetilde{F}} %F vue dans \Xt
\newcommand\dF{\partial \F}
\newcommand\F{F}
\newcommand\ft{{f_{\Xt}}}
\newcommand\gt{{g_{\Xt}}}
\newcommand\gts{{g_{_{\Skt}}}}
\newcommand\gk{{g^\C}}
\newcommand\gti{\tilde{g}}
\newcommand\rx{r_X}
\newcommand\tr{r_{_{\pazocal{S}}}}
\newcommand\norm{N_{_{\pazocal{S}}}}
\newcommand\kp{\kappa^\oplus}
\newcommand\km{\kappa^\ominus}
\newcommand\K{K}
\newcommand\resol{\pi}
\newcommand\dist{\rho} %fonction définissant 0 dans X
\newcommand\NK{{\left(\norm\circ \K\right)}}
\newcommand\X{X}
\newcommand\Xt{\tilde{X}}
\newcommand\Li{\Sp} %bord de X (link)
\newcommand\dk{{\delta_k}}
\newcommand\ek{{\varepsilon_k}}
\newcommand\Fs{F^\square}
\newcommand\Sks{\Sk^\square}
\newcommand\Courbes{\pazocal{C}} %Courbes compactes
\newcommand\Courbesn{\norm^*(\Courbes)} %Courbes dans la normalisée
\newcommand\Courbesk{\left(\norm\circ \K\right)^*(\Courbes)} %Courbes dans le modèle complexe.
\newcommand\Courbeskt{\left(\norm\circ \K\right)^*(\Courbest)}
\newcommand\Ck{\left(\norm\circ \K\right)^*(C)} % La préimage d'une courbe C dans le modèle complexe
\newcommand\Cuk{\left(\norm\circ \K\right)^*(C_1)}
\newcommand\Cdk{\left(\norm\circ \K\right)^*(C_2)}
\newcommand\Courbest{\pazocal{C}_{tot}} %%"toutes les courbes" dans la normalisée
\newcommand*{\str}[1][E]{{#1}^{str}} %% "sous variété stricte" (utilisé dans modèles algébriques locaux...
\newcommand\Upt{\overline{U_p}} %les ouverts dans St
\newcommand\Uqt{\overline{U_q}}
\newcommand\Upit{\overline{U_{p_i}}}
\newcommand\Upk{U_p^\C} %les ouverts dans St "dénormalisée"
\newcommand\Uqk{U_q^\C}
\newcommand\Upik{U_{p_i}^\C}
\newcommand{\ba}[1][E]{\overline{#1}}
\newcommand\pt{{\ba[p]}}
\newcommand\Ct{\ba[C]}
\newcommand\Cpt{\ba[C']}
\newcommand\Ut{\ba[U]}
\newcommand{\ti}[1][E]{\widetilde{#1}}
\newcommand\Cti{\ti[C]}
\newcommand\Cpti{\ti[C']}
\newcommand\Uti{\ti[U]}
\newcommand\VG{{\pazocal{V}(\Gamma)}}
\newcommand\EG{{\pazocal{E}(\Gamma)}}
\newcommand\PV{{\pazocal{V}}}
\newcommand\PE{{\pazocal{E}}}
\newcommand*{\ve}[1][E]{v_{#1}} %% vertex associé à une courbe
\newcommand*{\Gra}[1][E]{\Gamma(#1)} %Graphes duaux des configurations de surfaces.
\newcommand\Gn{\Gra[\Courbesn]}
\newcommand\Gc{\Gra[\Courbes]}
\newcommand\Gct{\Gra[\Courbest]}
\newcommand\Gk{\Gra[\Courbesk]}
\newcommand\Gkt{\Gra[\Courbeskt]}
\newcommand\Gresol{\Gra[E]}
\newcommand*{\Grad}[1][E]{\overset{\star}{\Gamma}\left(#1\right)} %Les mêmes, décorés.
\newcommand\Gcd{\Grad[\Courbes]}
\newcommand\Gkd{\Grad[\Courbesk]}
\newcommand\Gresold{\Grad[E]}
\newcommand\Gcdtpart{\Grad[\Courbest]}
\newcommand\Gcdt{\Grad[\Courbest]}
\newcommand\Gkdt{\Grad[\Courbeskt]}
\newcommand\Gresoldc{\Grad[E]}
\newcommand\Gmult{\Gamma^\mu (E_{tot})}
\newcommand*{\Gdp}[2]{\Gamma_{#2}({#1})}%Graphe de plombage d'une configuration de surfaces dans une variété lisse.
\newcommand\plus{\oplus}
\newcommand\minus{\ominus}
\newcommand\Sk{\pazocal{S}_k}
\newcommand\Sb{{\widetilde{\pazocal{S}}}}
\newcommand\Skt{{\widetilde{\pazocal{S}_k}}}
\newcommand\St{{\overline{\pazocal{S}}}}
\newcommand\Sn{{\Skt^{N}}}
\newcommand\Zk{Z_k}
\newcommand\Crit{\operatorname*{Crit}}
\newcommand{\sm}[1]{{#1}^0}
\newcommand{\si}[1]{\Sigma #1}
\author{Octave Curmi}
\title{Topology of smoothings of non-isolated singularities of complex surfaces}
\date{}
\begin{document}

\maketitle

\begin{abstract}
We prove that the boundaries of the Milnor fibers of smoothings of non-isolated reduced complex surface singularities are graph manifolds. Moreover, we give a method, inspired by previous work of Némethi and Szilard, to compute associated plumbing graphs. 
\end{abstract}

\section{Introduction}

The study of Milnor fibers of complex-analytic functions, which began in the second half of the 20th century, gave rise to a rich interaction between algebra and topology. 

John Milnor provided in \cite{Mil56} the first examples of exotic spheres, that is, smooth manifolds homeomorphic but not diffeomorphic to the $n$-sphere, in the case $n=7$. 

Meanwhile, a first relation between the topology of varieties and complex analytic regularity was discovered by Mumford, in \cite{Mum61}, where he established that a $2$-dimensional normal complex surface which is a topological manifold is nonsingular. Exploring the possibility of extending such results to higher dimensions, Brieskorn proved in \cite{Bri66} that it is no longer the case, producing many examples of singularities which are topological manifolds, namely all singularities of the form $$V=\{z_1^2+\cdots +z_k^2-z_0^3=0\}\subset \C^{k+1} , ~k\text{ odd and }k \geq 3.$$

The \textbf{link} $V\cap \Sp_\varepsilon$ of such singularities is always a knotted sphere, and in some cases it is an exotic sphere, as was shown by Hirzebruch in the case $k=5$, see \cite[p 46-48]{Bri00} and \cite{Hirz95}. See also \cite[Chapter 9]{Mil68}, and \cite[Section 1]{Sea19}.

This potential fabric of exotic spheres led Milnor to study further the topology of hypersurface singularities, and eventually to write his famous 1968 book \cite{Mil68}, aimed at the study of isolated singularities of hypersurfaces of $\C^n$, $V(f)=\{f=0\}$. In this work he introduced two equivalent fibrations, using respectively the levels of $f/|f|$ on spheres centered at the critical points of $f$ and the levels of $f$ in $\C^n$. 

The second of these two fibrations has been extended to more general contexts by Lê, see \cite{Le77}, and is known as the \textbf{Milnor-Lê fibration}. However, it may produce singular generic fibers, due to the singularities of the ambient space. Hamm, in \cite{Ham71}, provided a setting in which the Milnor-Lê fibration is actually a \textbf{smoothing} of $V(f)$, that is, a way to put $V(f)$ in a \textbf{flat} family of analytic spaces, where the generic space is smooth. Namely, if $(X,0)$ is a germ of equidimensional complex analytic space, and $f$ is any holomorphic function on $(X,0)$ such that $V(f)\supset \Sigma X$, then the function $f$ provides a smoothing of the singularity $(V(f),0)$, where $\Sigma X$ denotes the singular locus of $X$.

\bigskip

This work is dedicated to the study of smoothings of complex analytic surface singularities. We start with a germ of complex surface $(V,p)$, and, when there exists a smoothing $f \colon (X,0)\rightarrow (\C,0)$, we study the generic fiber of this smoothing.

A singularity of surface may admit different smoothings, or even none. The study of the topology of the fibers of the smoothings of a given singularity of complex surface is very hard, even for isolated ones, and there is only a few types of singularities where a description of the full fiber is known. It is the case for the Kleinean sigularities A, D, E, where the Milnor fiber is unique and diffeomorphic to the minimal resolution (see Brieskorn, \cite{Bri66.2}), as well as for the singularities of normal toric surfaces, with a description by surgery (see Lisca, \cite{Lis08} and Némethi \& Popescu-Pampu, \cite{NemPop10}), and for sandwich singularities (De Jong \& Van Straten, \cite{DejVan98}). As for nonisolated singularities, the only known case is that of hypersurface singularities of the form $\{f(x,y)+z\cdot g(x,y)=0\}$, see Sigur\dh sson, \cite{Sig16}. One can also refer to the survey \cite{Gre19} for more about the topology of smoothings and deformations of singularities.

On the other hand, the study of the \textbf{boundary} of the Milnor fiber has been a very active area of research in the last decades.

The boundary of the fiber of a smoothing of an isolated singularity is unique, and equal to the link of the singularity. In today's words, Mumford proved in \cite{Mum61} that the link of any isolated singularity of complex surface is a \textbf{graph manifold}, that is, a manifold describable using a decorated graph whose vertices represent fibrations in $\Sp^1$ over compact surfaces. It is Waldhausen, in \cite{Wal67}, that later introduced this vocabulary and began studying this class of varieties in itself. Furthermore, since the work of Grauert (\cite{Gra62}), one knows exactly which graph manifolds appear as links of isolated singularities of complex surfaces. However, this strong point is tempered by the fact that one still does not know, for example, which of these manifolds appear as links of singularities of \textbf{hypersurfaces} of $\C^3$.

Still, one would dream of having an analogous result for boundaries of Milnor fibers associated to non isolated singularities. The first steps towards the comprehension of the topology of these manifolds were made by Randell \cite{Ran77}, then Siersma in \cite{Sie91}, \cite{Sie01}, who computed the homology of the boundary $\partial F$ of the Milnor fiber in certain cases, and characterized the cases in which $\partial F$ is a rational homology sphere. 

Concerning the general topology of this manifold, a series of results were aimed at proving that the boundary of the Milnor fibers associated to a non isolated singularity are, again, graph manifolds. A first proof of this result was sketched in 2003 by Michel and Pichon (\cite{MicPic03}) and corrected in \cite{MicPic04}, for the case where the total space of the smoothing is smooth and the function $f$ is reduced. In 2016, they published the details of their proof in \cite{MicPic16}. In collaboration with Weber, they provided explicit plumbing graphs for several classes of singularities: \textbf{Hirzebruch surface singularities} in \cite{MicPicWeb07}, and the so-called \textbf{suspensions} ($f=g(x,y)+z^n$) in \cite{MicPicWeb09}. Fern\'{a}ndez de Bobadilla and Menegon Neto, in \cite{FerMen}, proved it in the context of smoothings of non-isolated and not necessarily reduced singularities whose total space has an isolated singularity, for a function of the form $f\cdot\overline{g}$, with $f$ and $g$ holomorphic. But none of these approaches was constructive. By contrast, Némethi and Szil\'ard gave a constructive proof for the case of reduced holomorphic functions $f \colon (\mathbb{C}^3, 0) \to (\mathbb{C}, 0)$ in their 2012 book \cite{NemSzi12}. Namely, they provided an algorithm to compute the boundary of the Milnor fiber of the function $f$.

However, in general, the total space of a smoothing of a non-isolated singularity of surface has no reason to be smooth, or even with isolated singularity. For example, as soon as the 
generic singularity $(V,q)$ along an irreducible component of the singular locus of $V$ can not be holomorphically embedded in $\C^3$, this means that it can not be seen as a hypersurface of a variety with isolated singularity. It is therefore essential to generalize this result concerning the topology of the boundary of the Milnor fiber to more general spaces.

In this work, we extend the strategy developed in \cite{NemSzi12} by Némethi and Szil\'ard and we prove the following, see Theorem \ref{thm main}:
\begin{theo}\label{th main intro} Let $(\X,0)$ be a germ of $3$-dimensional complex analytic variety, and $f \colon (\X,0)\rightarrow (\mathbb{C},0)$ a germ of reduced holomorphic function on $(\X,0)$, such that $\Vf$ contains the singular locus  $\si[\X]$ of $X$. Then the boundary of the Milnor fiber of $f$ is an oriented $3$-manifold, which can be represented by an orientable plumbing graph.
\end{theo}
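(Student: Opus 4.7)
The plan is to adapt the algorithmic strategy of Némethi and Szilárd from \cite{NemSzi12}, which treats the case $(\X,0)=(\C^3,0)$, to an arbitrary germ $(\X,0)$ of equidimensional $3$-fold. The first step is to choose an embedded resolution $\resol \colon \Xt \to \X$ of the pair $(\X,\Vf)$. Since by hypothesis $\Vf$ already contains the singular locus of $\X$, such a $\resol$ simultaneously resolves the ambient singularities and the divisor, producing a smooth $3$-fold $\Xt$ on which $E := \resol^{-1}(\Vf)$ is a simple normal crossings divisor with smooth irreducible components $(E_v)_v$. Because $\resol$ is biholomorphic over $\X \setminus \Vf$, the Milnor fiber $F$ of $f$ is diffeomorphic to a fiber of $\ft := f \circ \resol$ for $|\ft|$ small and nonzero, and its boundary $\dF$ sits inside a tubular neighborhood of $E$ in the smooth space $\Xt$.

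The second step is to build, in the spirit of the \emph{real oriented} models of \cite{NemSzi12}, a decomposition of this tubular neighborhood into closed pieces indexed by the vertices and edges of the dual graph $\Gresol$ of $E$. Around a generic point of a component $E_v$, the function $\ft$ has the local monomial form $u^{m_v}$, where $m_v$ is the multiplicity of $\ft$ along $E_v$, so the intersection of $\ft^{-1}(\delta)$ with such a vertex piece is a circle bundle over a suitably truncated copy of $E_v$. Around a double point $p = E_v \cap E_w$, the local model is $u^{m_v} v^{m_w} = \delta$, whose small slice is a thickened torus mediating between the Seifert pieces associated to $v$ and $w$. Assembling these pieces along their common boundary tori reconstructs $\dF$ as a plumbing of Seifert fibered $3$-manifolds.

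To read off an orientable plumbing graph out of $\Gresol$, one decorates each vertex $v$ with the genus of $E_v$ (corrected for its intersection points with the rest of $E$) and with the Euler number of the $\Sp^1$-bundle obtained from the $m_v$-th root of $\ft/|\ft|$, which can be computed from $E_v^2$ and the multiplicities $(m_w)_{w\sim v}$ along neighbouring components; each edge carries the toric glueing data determined by the pair $(m_v,m_w)$. The complex-analytic nature of the local models ensures that all identifications along tori are orientation-preserving with respect to the canonical orientations coming from the complex structures on $E_v$ and on the transverse holomorphic disks, so the resulting plumbing graph is orientable.

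The main obstacle is that, in contrast with the hypersurface setting of \cite{NemSzi12}, here $E$ contains components arising from the resolution of the \emph{ambient} singularities of $\X$, not only from the strict transform of $\Vf$. Along such components, $\ft$ is still locally monomial in suitable coordinates on $\Xt$, but producing a coherent real oriented model and identifying the combinatorial invariants (multiplicities, Seifert invariants, self-intersections) that these ``purely ambient'' components contribute to $\dF$ requires extending the local calculations of \cite{NemSzi12} to arbitrary smooth germs equipped with an SNC divisor carrying a monomial function. Once this local analysis is secured, the globalization, the orientability, and the explicit recipe for the plumbing decorations follow by essentially formal arguments.
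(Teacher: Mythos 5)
Your plan breaks down before the ambient singularities of $X$ even enter the picture, because the local-to-global model you describe does not compute $\partial F$. First, a dimensional problem: in this setting the components $E_v$ of the simple normal crossings divisor $E=\pi^{-1}(V(f))\subset\widetilde X$ are complex \emph{surfaces}, not curves, so ``the genus of $E_v$'', ``a circle bundle over a truncated copy of $E_v$'' and ``the Euler number of the $\mathbb S^1$-bundle obtained from the $m_v$-th root of $\ft/|\ft|$'' are not defined. Near a generic point of $E_v$ one has $\ft=u^{m_v}$ and the nearby fiber $\ft^{-1}(\delta)$ is an $m_v$-sheeted cyclic cover of a piece of $E_v$ (real dimension $4$), not a circle bundle; the Mumford-type vertex/edge decomposition you invoke is the one for links of normal \emph{surface} singularities and does not transplant to a function on a $3$-fold. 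Second, and more fundamentally, $\partial F=f^{-1}(\delta)\cap S_\varepsilon$ is not the boundary of a tubular neighbourhood of (any compact part of) $E$, nor the link of $V(f)$: since $f$ has non-isolated singularities, $\partial F$ contains a vanishing zone over the circles $\Sigma V(f)\cap S_\varepsilon$, governed by the transversal Milnor fibrations and their monodromy, and the naive resolution picture gives no control on how this zone is glued to the rest. This difficulty is already present when $X=\C^3$, which is exactly why Némethi and Szil\'ard do not argue as you propose.

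The missing idea, which is the backbone of the paper, is to choose a \emph{companion} function $g$ of $f$ and to replace $\partial F$ by the link of the real analytic $4$-dimensional germ $\Sk=\{f=|g|^k\}\subset X$ (with $k$ even and large), which has an \emph{isolated} singularity and whose link is shown to be orientation-preserving diffeomorphic to $\partial F$. Only then does a ``boundary of a tube around an exceptional configuration'' argument apply: one takes a modification of $X$ adapted to the pair $(f,g)$, the strict transform of $\Sk$, its normalization, local complex models, and a Hirzebruch--Jung resolution, and the resulting plumbing vertices correspond to (cyclic covers of) the \emph{double curves} of the total transform of $V(f\cdot g)$ -- not to the divisor components $E_v$, as in your graph $\Gamma(E)$. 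Finally, your orientation claim is untenable: precisely because the auxiliary space is only real analytic, some plumbing edges must be decorated with $\ominus$ (this never happens for complex configurations), so an argument asserting that ``the complex-analytic nature of the local models'' makes all torus gluings positively oriented cannot be correct; if it were, $\partial F$ would always be the boundary of a plumbing along a complex curve configuration, which fails in general.
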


We prove this theorem in a constructive way, by adapting to our more general context the method of Némethi and Szil\'ard. As was the case in their book, our proof gives rise  to a method for the computation of $\partial F$. It may therefore represent a step in the obtention of a characterization of the graph manifolds which appear as boundaries of Milnor fibers. Let us insist on the fact that the singular locus of the ambient germ of variety $(X,0)$ induces a series of complications in the proof, implying in particular the need for more data than in \cite{NemSzi12} in order to perform the computations (see Section \ref{section configuration complexification}), as well as the introduction of the notion of companion (see Definition \ref{companion}), which is made to obtain the analogue of ICIS on germs which may not be complete intersections. We also treated the questions of orientations in a more global way, which we hope to be more natural. An important common point with the situation of \cite{NemSzi12}, and the main idea of their proof, is the introduction of a real analytic germ $(\Sk,0)$ of dimension $4$ with isolated singularity having the same boundary as the Milnor fiber, therefore actually realizing this manifold as the link of a singularity. The fact that this variety is real analytic, and not complex-analytic, imposes the use of $\minus$ decorations for some edges of the graph, which never happens in the case of isolated complex surface singularities.
\bigskip

I would like to thank Patrick Popescu-Pampu for his longterm support and comments around this work, as well as Javier Fern\'andez de Bobadilla, Michel Boileau, Andr\'as Némethi, Anne Pichon and David Trotman for helpful discussions and references.

I thank also the AGT team in Aix-Marseille Université and the ANR LISA for allowing me to attend many enriching events.
\bigskip

Each section of the paper starts with a brief description of its content.
\tableofcontents
\section{Background material}

\subsection{Milnor fibration on a singular variety}
In the sequel, a {\em variety} will mean a reduced and equidimensional complex analytic space. 

Let $(X,0)$ be a germ of complex variety of dimension $3$, and let $f \colon (X,0)\rightarrow (\mathbb{C},0)$ be a germ of holomorphic function on $(\X,0)$ assumed not to divide zero, that is, not to vanish on any irreducible component of $(X,0)$. The function $f$ defines a germ $(\Vf,0)$ of hypersurface on $(\X,0)$, where $\Vf:=\{x\in \X, f(x)=0\}$. Denote by $\Sigma \Vf$ the singular locus of $\Vf$, and by $\Sigma \X$ the one of $X$. 

In the sequel, if $(X,0)$ is a germ, $X$ will denote a representative of this germ.

\begin{defn}
We say that a real analytic function $\rho \colon X \rightarrow \R_+$ \textbf{defines} $0$ in $X$ if $0$ is isolated in $\rho^{-1}(0)$, i.e. if there is another representative $X'\subset X$ of $(X,0)$ such that $\rho_{|X'}^{-1}(0)=\{0\}$.
\end{defn}

\begin{theorem}
\emph{(H. Hamm, \cite[Satz 1.6]{Ham71}, Lê \cite[Theorem 1.1]{Le77})}\label{thm fibration}

\noindent Given a real analytic function $\rho$ defining $0$ in $\X$, and $\varepsilon>0$, denote $\X_{\varepsilon}:=\X\cap \rho^{-1}([0,\varepsilon))$, and $\Li_\varepsilon:=\X\cap \{\rho=\varepsilon\}$. Let $f\colon (X,0)\rightarrow (\C,0)$ be a germ of holomorphic function, such that $\X\setminus \Vf$ is smooth. Then there exists $\varepsilon_0>0$, such that $\forall ~ \, 0<\varepsilon \leqslant \varepsilon_0, \exists ~ \delta_\varepsilon > 0$ such that $\forall ~  \,0<\delta\leqslant \delta_\varepsilon$, the following two maps are diffeomorphic smooth fibrations:

\begin{itemize}
\item $\frac{f}{|f|}\colon \Li_\varepsilon \setminus \Vf \rightarrow \Sp^1$

\item $f\colon \partial\left(\left\{|f|=\delta\right\} \cap \X_\varepsilon\right)\rightarrow \partial\left(D_\delta\right)$, where $D_\delta$ denotes the closed disc of radius $\delta$ around $0$ in $\C$.
\end{itemize}

\end{theorem}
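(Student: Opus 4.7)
The plan is to reduce both assertions to a single transversality statement relating $\rho$ and $f$, from which both fibrations follow via Ehresmann's fibration theorem and an explicit vector-field construction. Concretely, I would first establish that for all sufficiently small $\varepsilon > 0$, the sphere $\Li_\varepsilon$ meets each complex-analytic fiber $f^{-1}(c)$ transversally inside the smooth locus $\X \setminus \Vf$, for every $c \in \C$ with $|c|$ sufficiently small. This is the analogue, in the singular ambient space $\X$, of Milnor's original sphere-transversality lemma, or of Thom's $a_f$-condition as exploited by L\^e.

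The transversality statement itself would be proved by contradiction, using the Curve Selection Lemma applied to the semi-analytic locus of points in $\X \setminus \Vf$ where non-transversality occurs. If transversality failed in every neighborhood of $0$, the Curve Selection Lemma would produce a real analytic arc $\gamma : [0, \eta) \to \X$ with $\gamma(0) = 0$ and $\gamma(t) \in \X \setminus \Vf$ for $t > 0$, along which $d\rho|_{T_{\gamma(t)}\X}$ lies in the real span of the restrictions to $T_{\gamma(t)}\X$ of $d(\operatorname{Re} f)$ and $d(\operatorname{Im} f)$. Comparing leading orders in $t$ of $\rho(\gamma(t))$ and $f(\gamma(t))$ --- using that $\X \setminus \Vf$ is smooth so $f$ is a submersion there, and that $\rho$ defines $0$ so $\rho(\gamma(t))$ has positive order in $t$ --- yields a contradiction. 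The hypothesis $\Sigma \X \subset \Vf$ enters precisely here: it guarantees the smooth differential geometry outside $\Vf$ needed for this comparison, so that the argument can be carried out on a Whitney stratification of $\X$ having $\Vf$ as a union of closed strata.

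Given this transversality, the second fibration is almost immediate: for $0 < \delta \ll \varepsilon$, the set $\{|f| = \delta\} \cap \X_\varepsilon$ is a smooth $4$-manifold whose compact boundary $\{|f| = \delta\} \cap \Li_\varepsilon$ is mapped by $f$ onto $\partial D_\delta$ as a proper submersion, and Ehresmann's theorem delivers local triviality. The same transversality implies that $f/|f|$ is a submersion on $\Li_\varepsilon \setminus \Vf$; to obtain local triviality despite non-compactness, one lifts the angular vector field $\partial_\theta$ on $\Sp^1$ to a smooth vector field on $\Li_\varepsilon \setminus \Vf$ tangent to the level sets of $|f|$, whose flow trivializes $f/|f|$ over arcs of $\Sp^1$. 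To identify the two fibrations, I would integrate a further vector field on $\Li_\varepsilon \cap \{0 < |f| \leq \delta\}$ along which $d(\arg f)$ vanishes and $d(|f|) > 0$; its flow conjugates the tube boundary $\{|f| = \delta\} \cap \Li_\varepsilon$ with a shell of $\Li_\varepsilon \setminus \Vf$ in a way that preserves $\arg f$, yielding a diffeomorphism of fibrations over $\Sp^1$.

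The main obstacle is the Curve Selection Lemma argument of the second paragraph. Because $\rho$ is only required to be real analytic (not a squared distance function in an ambient embedding) and $\X$ is singular at $0$, one must first arrange the real analytic data carefully --- typically by choosing a Whitney stratification of $\X$ compatible with $\Vf$ along which $\rho$ is well-behaved --- so that the Curve Selection Lemma applies intrinsically on $\X$ and so that the order of vanishing of $\rho(\gamma(t))$ along an analytic arc $\gamma$ can actually be compared with that of $f(\gamma(t))$. Once the key transversality is in hand, the remaining steps are standard.
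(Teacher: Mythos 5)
First, a point of comparison: the paper does not prove this statement at all --- it is quoted as background, with the proof attributed to Hamm (Satz 1.6) and L\^e (Theorem 1.1). So there is no internal proof to measure you against; your sketch has to be judged against the classical arguments, and it does follow their general route (stratified transversality, Ehresmann, Milnor-style vector fields to compare the tube and sphere fibrations).

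There is, however, a genuine gap in the key step as you describe it. Your Curve Selection Lemma argument uses arcs $\gamma$ with $\gamma(0)=0$, and such an argument can only rule out non-transversality \emph{accumulating at the origin}; it yields the existence of $\varepsilon_0$ such that the levels $\{\rho=\varepsilon\}$, $\varepsilon\leqslant\varepsilon_0$, are (stratified) transverse to $V(f)$, or transverse to one fixed fiber. It does not produce the quantifier you actually need: for each fixed $\varepsilon\leqslant\varepsilon_0$, a $\delta_\varepsilon>0$ such that $f^{-1}(c)$ is transverse to $\{\rho=\varepsilon\}$ for all $0<|c|\leqslant\delta_\varepsilon$. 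The potential failure there is located at points of $V(f)\cap\{\rho=\varepsilon\}$, which are at "distance" $\varepsilon$ from the origin, so arcs through $0$ say nothing about it, and a naive comparison of leading orders of $\rho(\gamma(t))$ and $f(\gamma(t))$ does not apply. The classical mechanism closing this gap is the one you mention only in passing: the existence of a Thom ($a_f$) stratification of the pair $(X,V(f))$ (Hironaka), so that limits of tangent spaces of nearby fibers $f^{-1}(c)$ contain the tangent spaces of the strata of $V(f)$; combined with the stratified transversality of $\{\rho=\varepsilon\}$ to $V(f)$ and compactness of $V(f)\cap\{\rho=\varepsilon\}$, this propagates transversality to all fibers over a sufficiently small punctured disc, giving $\delta_\varepsilon$. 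Your sketch should make this the central lemma rather than an aside, since without it the second fibration (and the uniformity in $c$) does not follow. The remaining steps --- Ehresmann on the tube (in its version for manifolds with boundary/corners, since the tube meets $\{\rho=\varepsilon\}$), the lifting of $\partial_\theta$ for $f/|f|$, and the flow increasing $|f|$ at constant argument to identify the two fibrations --- are standard and essentially as in Milnor and L\^e, provided the vector fields are constructed stratum-wise and controlled near $V(f)$ so that the flow stays in the region considered.
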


\begin{defn}
The first of the two fibrations above is referred to as the \textbf{Milnor fibration of $f$}, and the second one is called the \textbf{Milnor-Lê fibration}. 
%(see \cite[Theorem 1.1]{Le1})
The closure of the fiber of the Milnor-Lê fibration is called the \textbf{Milnor fiber} of the germ of function $f$.
\end{defn}

\begin{rk}
The Milnor-Lê fibration is also sometimes referred to as the Milnor fibration.

Using transversality arguments, one may show that the diffeomorphism type of the Milnor fiber does not depend on the chosen representative, so we speak about \textbf{the Milnor fiber of the germ of function $f\in \mathcal{O}_{\X,0}$}
\end{rk}

\subsection{Graph manifolds}
We explain here the definition of graph manifold (also called plumbed manifolds) that we refer to. For details, one can consult the foundational articles \cite{Mum61} and \cite{Wal67}, as well as the article \cite{Neu81} for a description of the so-called plumbing calculus and other topological considerations on graph manifolds.

Recall that the Euler number of a fibration in $\Sp^1$ over a surface is an integer $e$ that characterizes entirely the total space of the fibration up to homeomorphism. See \cite[Example 4.6.5]{GomSti} for more details.

\begin{defn}
An \textbf{orientable plumbing graph} is a graph $\Gamma$ with decorations of the following type:
\begin{itemize}
\item Each edge is decorated by a $\plus$ or $\minus$ symbol.
\item Each vertex $v$ is decorated by an Euler number $e_v\in \Z$ and a genus $[g_v]$, $g_v\in \N$. 
\end{itemize}
\end{defn}

\begin{rk}
When representing plumbing graphs, we may omit the decorations $\plus$ and $[0]$.
\end{rk}

\begin{defn}{\bf (Graph manifold)}

If $\Gamma$ is an orientable plumbing graph, define the oriented \textbf{graph manifold} $M_\Gamma$ associated to $\Gamma$ in the following way: for each vertex $v$ of $\Gamma$ decorated by $([g_v],e_v)$, let $M_v$ be the $\Sp^1$-fibration of Euler number $e_v$ over the closed smooth surface $B_v$ of genus $g_v$. Pick an orientation of the base and the fibers so that, taken together, they give the orientation of $M_v$.

Now, let $\lambda_v$ be the number of times the vertex $v$ appears as endpoint of an edge. Remove from $B_v$ disjoint open disks $(D_i)_{1\leqslant i\leqslant\lambda_v}$, consequently removing as many open solid tori from $M_v$. Each $\partial D_i$ is oriented as boundary component of $B_v \setminus \bigsqcup D_i$. Denote by $M_v^b$ the resulting circle bundle with boundary. Denote $\partial M_v^b=\bigsqcup T_i$ a disjoint union of tori.

For every edge between the vertices $v$ and $v'$, glue the manifolds $M_v^b$ and $M_{v'}^b$ in the following way: pick boundary components $T=\partial D\times \Sp^1$ in $M_v^b$, $T'=\partial D' \times \Sp^1$ in $M_{v'}^b$, and glue $T$ and $T'$ according to the matrix $\epsilon\begin{bmatrix}
0 & 1\\
1 & 0 
\end{bmatrix}$, $\epsilon$ being the sign on the edge.

Finally, we use the convention $$M_{\Gamma_1 \bigsqcup \Gamma_2}=M_{\Gamma_1} \# M_{\Gamma_1}$$
\end{defn}

\section{The ambient germ $(X,0)$ can be assumed normal}\label{X normal}
Before proceeding to the heart of the proof, let us reduce our study to the situation where the ambient germ of variety $(\X,0)$ is normal.

\begin{lemma}
Let $(X,0)$ be a germ of complex variety of dimension $3$, and let $N\colon (\overline{X},\overline{x})\rightarrow (X,0)$ be its normalization. Let $f$ be a germ of analytic function on $(X,0)$ such that $V(f) \supseteq \Sigma X$. Then $N$ provides an orientation-preserving isomorphism of complex-analytic manifolds from the Milnor fiber of $\tilde{f} \colon = f \circ N$ defined on the normal germ $(\overline{X},\overline{x})$ to the one of $f$.
\end{lemma}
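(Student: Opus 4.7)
The plan is to exploit the fact that the hypothesis $\Vf\supseteq \Sigma X$ places the Milnor fiber of $f$ entirely inside the smooth locus $X\setminus \Sigma X$, which is precisely the open subset where the normalization $N$ is a biholomorphism. Once this is noticed, the statement should be nearly tautological, and the remaining work is just to pull back carefully the defining function $\rho$ used in Theorem \ref{thm fibration} via $N$.

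First I would pick a real analytic function $\rho$ defining $0$ in $X$ and set $\tilde{\rho}:=\rho\circ N$. Since $N$ is a finite morphism, $N^{-1}(0)$ is a finite set, so after shrinking to sufficiently small representatives, $\tilde{\rho}$ defines each preimage point in $\overline{X}$. Because $N$ restricts to an isomorphism over $X\setminus\Sigma X$, one has $\Sigma \overline{X}\subseteq N^{-1}(\Sigma X)\subseteq N^{-1}(\Vf)=V(\tilde{f})$, so $\overline{X}\setminus V(\tilde{f})\cong X\setminus\Vf$ is smooth, and $\tilde{f}$ does not divide zero on $\overline{X}$ because $f$ does not divide zero on $X$ and every irreducible component of $\overline{X}$ maps surjectively onto one of $X$. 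Theorem \ref{thm fibration} therefore applies to both $f$ on $X$ with $\rho$ and to $\tilde{f}$ on $\overline{X}$ with $\tilde{\rho}$; by taking the minima of the admissible radii, I can fix common valid values $\varepsilon,\delta$.

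The crucial point is then that the Milnor fiber of $f$ sits inside $\{|f|=\delta\}\cap X_\varepsilon\subseteq X\setminus \Vf\subseteq X\setminus \Sigma X$, since $f$ does not vanish there. Because $\tilde{f}=f\circ N$ and $\tilde{\rho}=\rho\circ N$, the identity
$$N^{-1}\left(\{|f|=\delta\}\cap X_\varepsilon\right)=\{|\tilde{f}|=\delta\}\cap \overline{X}_\varepsilon$$
holds, and this preimage lies inside $N^{-1}(X\setminus \Sigma X)$, on which $N$ is a biholomorphism onto $X\setminus \Sigma X$. Passing to closures of the corresponding subsets, $N$ restricts to a biholomorphism between the Milnor fiber of $\tilde{f}$ and that of $f$. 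Orientation-preservation then comes for free, since any biholomorphism of complex manifolds preserves the canonical orientation induced by the complex structure.

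The only step requiring some care, rather than being a genuine obstacle, is the coordination of the quantifier chain $\exists\varepsilon_0\,\forall \varepsilon\leqslant \varepsilon_0\,\exists\delta_\varepsilon\ldots$ from Theorem \ref{thm fibration} on the two spaces $X$ and $\overline{X}$, together with the verification that $\tilde{\rho}$ genuinely defines the finite fiber $N^{-1}(0)$. Both are straightforward consequences of the finiteness of $N$ and of the independence of the Milnor fiber up to diffeomorphism from the chosen representative, recalled in the remark after Theorem \ref{thm fibration}.
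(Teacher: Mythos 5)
Your proof is correct and follows essentially the same route as the paper: both hinge on the observation that the Milnor fiber lies in $\{f\neq 0\}\subseteq X\setminus \Sigma X$, which is contained in the normal locus over which $N$ is an isomorphism, after choosing $\varepsilon,\delta$ admissible simultaneously for $(f,\rho,X)$ and for $(\tilde f,\rho\circ N,\overline X)$. The only cosmetic difference is that the paper writes the normalization explicitly as a multigerm $(X_1,x_1)\sqcup\dots\sqcup(X_k,x_k)$ and exhibits $F$ as the disjoint union $F_1\sqcup\dots\sqcup F_k$, which you handle implicitly by noting that $\rho\circ N$ defines each point of the finite fiber $N^{-1}(0)$.
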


\begin{proof}
Let $(\overline{\X},\overline{x})=(\X_1,x_1)\bigsqcup \dots \bigsqcup (\X_k,x_k)\xrightarrow{N}(\X,0)$ be the normalization of $(X,0)$. Then $f$ and $r$ induce analytic functions $f_i=f\circ N,~r_i=r\circ N$ on each $\X_i$, and $r_i$ defines $x_i$ in $\X_i$.

\noindent Now, let $\varepsilon>0,~ \delta_\varepsilon>0$ be adapted (in the sense of Theorem \ref{thm fibration}) to every triple $(f_i,r_i,\X_i)$, and to $(f,r,\X)$. In this situation, we now have a disjoint union of Milnor fibers $F_1\bigsqcup \dots \bigsqcup F_k$ in $\X_1\bigsqcup \dots \bigsqcup \X_k$, each associated with the analytic function $f_i$, $F_i=f_i^{-1}(\delta)\cap \X_{i_\varepsilon}$, and $N$ provides a diffeomorphism between $F$, the Milnor fiber of $f$, and $F_1\bigsqcup \dots \bigsqcup F_k$, because $$F=f^{-1}(\delta)\cap \X_\varepsilon \subset X^0,$$ and $X^0$ is itself contained in the set of normal points of $\X$.
\end{proof}

From now on, the ambient variety $X$ will be assumed normal.

\section{The variety $\Sk$}\label{Sk}
The goal of this section is to introduce a 4-dimensional real analytic variety with isolated singularity $(\Sk,0)$, which is built in such a way as to have a link diffeomorphic to the boundary of the Milnor fiber of $f$.

Let us introduce a second function $g$, that will be used as a computational tool.

\begin{defn}
Let $g\colon (\X,0)\rightarrow (\C,0)$ be a reduced holomorphic function. Together, $f$ and $g$ define a germ of morphism $\Phi:=(f,g) : (\X,0) \rightarrow (\C^2,0)$. 

The \textbf{critical locus} of $\Phi$, denoted $C_\Phi$, is the union $$C_\Phi:= \overline{\Crit(\Phi)}\cup \Sigma X$$

where $\Crit(\Phi)$ denotes the set of smooth points of $\X$ where $\Phi$ is not a local submersion.

The \textbf{discriminant locus} of $\Phi$, denoted by $\Delta_\Phi$, is the image of $C_\Phi$ by $\Phi$: $$\Delta_\Phi:= \Phi(C_\Phi).$$
\end{defn}

For our purposes, the function $g$ must be in a generic position relatively to $f$, more precisely we will ask $g$ to be a companion of $f$ in the following sense:

\begin{defn} \label{companion} 
We say that the holomorphic function $g\colon (X,0)\rightarrow (\C,0)$ is a \textbf{companion of $f$} if it satisfies the following conditions:
\begin{enumerate}
\item The surface $\Vg$ is smooth outside the origin. \label{g smooth}
\item $\Vg\cap \Sigma \Vf = \{0\}$. \label{intersection non sauvage}
\item The surfaces $\Vg$ and $\Vf$ intersect transversally outside the origin.\label{intersection transverse}
\item \label{discr} The discriminant locus $\Delta_\Phi$ is an analytic curve.
\end{enumerate}
\end{defn}

\begin{rk}
This definition is analogous to that of ``Isolated Complete Intersection Singularity'' (ICIS) described in \cite[3.1]{NemSzi12}, or in \cite[2.8]{Loo84}, valid only for ambient smooth germs. The denomination ``companion'' is introduced for the first time in our paper. Note that if the couple $(f,g)$ forms an ICIS in $(X,0)=(\C^3,0)$, then $g$ is a companion of $f$ (see \cite[2.8]{Loo84}).
\end{rk}

\begin{lemma}\label{existence companion}
Given a holomorphic function $f\colon (X,0)\rightarrow (\C,0)$ such that $\Sigma \Vf \supset \Sigma X$, there exists a function $g\colon (X,0)\rightarrow (\C,0)$ that is a companion of $f$.

Furthermore, given an embedding $(X,0)\subset (\C^N,0)$ and a function $f$ on $X$, there is a Zariski open dense set $\Omega$ in the space of affine hyperplanes of $\C^N$ at $0$ such that if $\{G=0\}$ belongs to $\Omega$, the restriction $g$ of $G \colon (\C^N,0)\rightarrow (\C,0)$ to $X$ will be a companion of $f$.
\end{lemma}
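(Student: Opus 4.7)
The plan is to prove the stronger second statement, which immediately implies the first. Fix an embedding $(X,0)\hookrightarrow(\C^N,0)$, parametrize nonzero linear forms $G$ on $\C^N$ by $\check{\mathbb{P}}^{N-1}$, and set $g := G|_X$. For each of the four conditions defining a companion I exhibit a Zariski open dense subset of $\check{\mathbb{P}}^{N-1}$ on which it holds; the desired $\Omega$ is their intersection.

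For conditions (1) and (2), observe that $\Sigma X$ has dimension $\leq 1$ (since $X$ is normal of dimension $3$) and $\Sigma\Vf$ has dimension $\leq 1$ (as singular locus of a reduced $2$-dimensional germ). Let $\mathcal{T}$ denote the union of the tangent lines at $0$ to the finitely many branches of $\Sigma X\cup \Sigma\Vf$. The condition that $\ker G$ contain no line of $\mathcal{T}$ is Zariski open dense and forces $\Vg\cap(\Sigma X\cup\Sigma\Vf)=\{0\}$ as germs, giving (2) and the singular-point part of (1). For smoothness of $\Vg$ at $p\in(X\setminus\Sigma X)\cap\Vg\setminus\{0\}$, one needs $G$ not to vanish on the tangent space $T_pX$. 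The incidence subvariety $B\subset(X\setminus\Sigma X)\times\check{\mathbb{P}}^{N-1}$ defined by $G(p)=0$ and $G|_{T_pX}\equiv 0$ has fibers over $X\setminus\Sigma X$ of projective codimension $4$, hence $\dim B\leq 3+(N-5)=N-2<N-1$, and its projection to $\check{\mathbb{P}}^{N-1}$ is a proper closed subvariety. Condition (3) follows by the same method: by (1) and (2), any $p\in\Vf\cap\Vg\setminus\{0\}$ lies in the smooth surface $\Vf\setminus\Sigma\Vf$, where transversality is the nonvanishing of $G|_{T_p\Vf}$; the analogous dimension count on $\Vf\setminus\Sigma\Vf$ gives a Zariski open dense condition.

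Condition (4), the main technical point, requires a polar-curve argument. Decompose $C_\Phi=\Sigma X\cup\overline{\Crit(\Phi)\cap(X\setminus\Sigma X)}$. Since $f\equiv 0$ on $\Sigma X\subset\Vf$, one has $\Phi(\Sigma X)\subset\{0\}\times\C$. On $X\setminus\Sigma X$, split by whether $df$ vanishes: after shrinking the representative so that $0$ is the unique critical value of $f$ near $0$, the set $\{df=0\}\cap(X\setminus\Sigma X)$ lies in $\Sigma\Vf$ and maps into $\{0\}\times\C$. The remaining polar set $P=\{p\in X\setminus\Sigma X : df|_p\neq 0,\,dg|_p\in\C\cdot df|_p\}$ is controlled by the incidence variety $\mathcal{P}\subset (X\setminus\Sigma X\setminus V(df))\times\check{\mathbb{P}}^{N-1}$ defined by $G|_{\ker(df|_{T_pX})}=0$; the fiber over $p$ is the projective annihilator of a $2$-plane in $\C^N$, of dimension $N-3$, so $\dim\mathcal{P}\leq N$ and the generic fiber of the projection to $\check{\mathbb{P}}^{N-1}$ has dimension $\leq 1$. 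Therefore $P$, and hence $C_\Phi$, has dimension $\leq 1$ for generic $G$; conditions (2) and (3) imply $\Phi|_{C_\Phi}^{-1}(0)=\{0\}$, so $\Phi|_{C_\Phi}$ is finite at $0$, and Remmert's proper mapping theorem identifies $\Delta_\Phi$ as an analytic germ of dimension $\leq 1$, i.e.\ an analytic curve.

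The hardest step is this polar-curve dimension bound in the singular-ambient setting; it goes through because the polar set is intrinsically defined on the smooth locus $X\setminus\Sigma X$, while $\Sigma X$ contributes to the discriminant only through the line $\{0\}\times\C$.
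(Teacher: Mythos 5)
Your overall plan is the standard one and is close in spirit to what the paper does: the paper disposes of Conditions (1)--(3) by choosing a hyperplane avoiding the tangent cone of $\Vf$ (which contains that of $\Sigma X$), and for Condition (4) it does not reprove anything but cites \cite[Cor.~1 of \S 5]{Hir77} and \cite[Thm.~2.1]{Le77}. You instead try to reprove the genericity statements from scratch by incidence-variety dimension counts, and this is exactly where your argument has genuine gaps. All three of your counts end with an unjustified step: you project an incidence set that lives over a \emph{non-compact} representative of the germ (over $X\setminus\Sigma X$, over $\Vf\setminus\Sigma\Vf$, over $X\setminus\Sigma X\setminus V(df)$) and assert that its image in $\check{\mathbb{P}}^{N-1}$ is ``a proper closed subvariety'', or that ``the generic fiber has dimension $\leq 1$, therefore $P$ has dimension $\leq 1$ for generic $G$''. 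Without properness the image of an analytic set need not be closed, let alone Zariski-closed, and fibre-dimension genericity does not follow; moreover the neighbourhood of $0$ on which the conclusion holds must be allowed to depend on $G$ while the bad set must not, a uniformity issue your argument never addresses. The standard repair is to replace these incidence sets by objects with a \emph{compact} exceptional fibre over $0$: the fibre over $0$ of the projectivized (relative) conormal space of $X$ (resp.\ of $\Vf$, resp.\ of the pair $(X,f)$), i.e.\ the set of limits at $0$ of tangent hyperplanes, which has dimension $\leq N-2$ in $\check{\mathbb{P}}^{N-1}$; avoiding it is a Zariski open dense condition and, by closedness of the conormal, forces transversality (resp.\ the polar set to be a curve) on a whole punctured neighbourhood of $0$. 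This is precisely the content of the results of Hironaka and L\^e invoked in the paper; your proposal identifies the right statements but does not actually prove them.

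There is also a concrete error in the count for Condition (1): the fibre of your incidence variety $B$ over $p$ has codimension $4$ only when $p\notin T_pX$; on the locus where $p\in T_pX$ (e.g.\ for conical germs, where this happens identically) the two conditions $G(p)=0$ and $G|_{T_pX}\equiv 0$ are not independent, the fibre has codimension $3$, and $\dim B$ can reach $N-1$, so the inference ``$\dim B\leq N-2$, hence the projection is not dominant'' breaks down. (The conclusion is still true there, but for a different reason: over such a locus the projection has positive-dimensional fibres, or, again, one argues via the conormal fibre over $0$.) The same caveat applies to your count for Condition (3) on $\Vf\setminus\Sigma\Vf$. By contrast, the parts of your argument for Condition (4) that follow the dimension bound are fine and are a correct way to finish: $\Phi(\Sigma X)$ and $\Phi(\{df=0\})$ land in $\{0\}\times\C$ and are curves because $g$ does not vanish identically on the branches involved, Conditions (2)--(3) give $\Phi|_{C_\Phi}^{-1}(0)=\{0\}$, and finiteness plus Remmert yields that $\Delta_\Phi$ is an analytic curve. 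So the missing ingredient is exactly the genericity of the polar-curve bound (and of transversality for (1) and (3)) as a Zariski open dense condition; as written, your proof assumes it rather than establishes it, whereas the paper imports it from the cited theorems.
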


\begin{proof}
To verify Conditions \ref{g smooth}, \ref{intersection non sauvage} and \ref{intersection transverse}, it is enough to take a hyperplane which contains neither the tangent cone to $\Vf$ nor that to $\Sigma X$ (which is contained in the first one). These finite conditions are therefore realized by a dense subset of hyperplanes through $0$.

Now, the fact that a generic linear form satisfies Condition \ref{discr} follows from the corollary $1$ of section $5$ of \cite{Hir77} and theorem 2.1 of \cite{Le77}.
\end{proof}

\begin{nota}
In the sequel, $g$ will denote a companion of $f$, and $\Phi$ denotes the morphism $(f,g)\colon (X,0)\rightarrow (\C^2_{x,y},0)$.
\end{nota}

\begin{defn}
For any $k \in 2\N^*$, we define the real analytic smooth variety $$\Zk:=\{(x,y)\in \C^2, x=|y|^k\}\subset \C^2.$$
This variety is a real surface which we orient by taking the pullback of the complex orientation of the $y$-axis by the projection on the second coordinate $\colon \left\lbrace\begin{array}{ccc}
\Zk & \rightarrow & \{x=0\}\\
(|y|^k,y) & \mapsto & y
\end{array}\right.$
\end{defn} 

%\begin{rk}
%Note that if the integer $k$ is odd, the variety $\Zk$ is not analytic.
%\end{rk}

Reminding that $\Delta_\Phi$ is a complex-analytic curve, one can use the Puiseux expansion method relatively to the Newton polyhedron of each of its branches, as briefly exposed in \cite[5.3]{Sha131}. A direct computation then leads to:

\begin{lemma} \label{lemma discriminant}
Let $s_0$ be the slope of the steepest edge of the Newton polygon of the branches $D_2,\cdots,D_n$.

Then for $k \in 2\N^*$, $k>s_0$, there is a neighbourhood $U_k$ of the origin in $\C^2$ on which: $$U_k\cap \left(\Delta_\Phi \setminus V(x)\right) \subset U_k\cap \{|x|>|y|^k\}.$$
%More precisely, the condition on $k$ is to be greater than the greatest slope of the Newton polygon of the function associated to the analytic curve $\Delta_\Phi$.
\end{lemma}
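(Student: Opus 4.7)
The plan is to decompose $\Delta_\Phi \setminus V(x)$ into its finitely many irreducible branches. Since $\Delta_\Phi$ is a complex-analytic curve by Condition \ref{discr}, after removing the (possibly empty) component $V(x)$, only finitely many branches $D_2,\dots,D_n$ remain. The degenerate branch $V(y)$, should it occur among them, is harmless: the inequality $|x|>|y|^k=0$ automatically holds at every point of this branch outside the origin. So one may focus on the branches that are not contained in the $y$-axis.

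For each such branch $D_i$, the projection onto the $y$-axis is a finite analytic cover (since $D_i$ does not contain $V(x)$), and the Newton--Puiseux theorem provides a parametrization
\[
x \;=\; c_i\, y^{s_i} + \text{higher-order terms in } y,
\]
where $c_i\neq 0$ and $s_i>0$ is a positive rational equal to the slope of an edge of the Newton polygon of a local equation of $D_i$. By the defining property of $s_0$, each $s_i$ satisfies $s_i\leq s_0$. Fix $k \in 2\N^*$ with $k>s_0$; then $k - s_i > 0$, and on $D_i$ one has the asymptotic
\[
\frac{|x|}{|y|^k} \;=\; |c_i|\,|y|^{s_i-k}\bigl(1+o(1)\bigr) \;\longrightarrow\; +\infty \qquad \text{as } y \to 0.
\]
Consequently there is a radius $r_i>0$ such that $|x|>|y|^k$ holds at every point of $D_i \cap \{0<|y|<r_i\}$.

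To conclude, take $U_k$ to be a bidisc of polyradius smaller than $\min_i r_i$ in the $y$-coordinate (and arbitrarily small in the $x$-coordinate). Since only finitely many branches are involved, this is an open neighbourhood of the origin, and on $U_k \cap (\Delta_\Phi \setminus V(x))$ the desired inclusion holds. The main care, as far as I can see, is purely organisational: keeping track of the possibly several conjugate Puiseux parametrizations attached to a single irreducible branch, and separating the exceptional branch $V(y)$ from the generic case. The substantive content reduces to the elementary asymptotic comparison above, so I do not foresee a serious obstacle beyond this book-keeping.
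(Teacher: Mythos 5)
Your argument is correct and is exactly the ``direct computation'' the paper alludes to: it establishes the lemma by Puiseux-expanding each branch of $\Delta_\Phi$ other than $V(x)$ with respect to $y$ and comparing the leading exponent $s_i\leq s_0$ with $k$. One small slip of wording: the finiteness of the projection $(x,y)\mapsto y$ on $D_i$ (equivalently the existence of the expansion $x=c_i\,y^{s_i}+\cdots$) comes from $D_i\neq V(y)$, i.e.\ $D_i$ is not the $x$-axis, which you had just excluded --- not from $D_i$ not containing $V(x)$.
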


\begin{rk}
Remark that $s_0$ occurs in fact at first as the steepest slope of all Newton polygons of all branches of $\Delta_\Phi$, and it is indeed equal to the steepest slope of the Newton polyhedron of $\Delta_\Phi$.
\end{rk}

\begin{defn}
A integer $k\in 2\N$, $k>s_0$ is called \textbf{big enough}. A neighbourhood $U_k$ of the origin in $\C^2$ as in Lemma \ref{lemma discriminant} is called \textbf{small enough} for $k$.
\end{defn}

From now on, $k$ will denote a big enough even integer.

\begin{defn}\label{def Sk germ}
For $k\in 2 \N, k>s_0$, define the germ $$(\Sk,0):=\left(\Phi^{-1}(\Zk),0\right)=\left(\{f=|g|^k\},0\right) \subset (\X,0).$$
\end{defn}

We are now going to build actual representatives $f^{-1}(\dk)\cap B_{\ek}$ of $F$ and $\Sk \cap B_{\ek}$ of $\Sk$ adapted to our construction, where $\ek, \dk$ will be positive numbers.

From now on, for reasons of convenience regarding this construction, we fix $k\in 2\N$ big enough and respecting 
Conditions \ref{k large enough}, \ref{cond k bons points.} and \ref{fractions kn1 m2 sont entiers pairs}. These conditions do not depend on the choice of $\ek$ and $\dk$.

\begin{defn}\label{def eps small}
A number $\ek>0$ is called \textbf{small enough} for $k$ if it verifies the following three conditions:
\begin{enumerate}
\item The neighbourhood $\Phi(X_{\ek})$ of the origin in $\C^2$ is small enough for $k$. 

\item $\forall ~0<\varepsilon<\ek$, $\Sk$ intersects transversally $\Sp_\varepsilon$. 

\item \label{condeps3} The number $\ek>0$ is as in the fibration Theorem \ref{thm fibration}, and such that the intersection $\Vf\cap \Vg$ is transverse in $X_{\ek}\setminus \{0\}$, and for all $0<\varepsilon<\ek$, the intersection $(\Vf \cap \Vg)\cap \Sp_\varepsilon$ is transverse.
\end{enumerate}
\end{defn}

From now on, $\ek>0$ is chosen to be small enough for $k$.

\begin{lemma}\label{orientation Sk}
The germ $(\Sk,0)$ is an isolated singularity of real analytic $4$-variety. Furthermore the representative $\Sk\cap X_{\ek}$ of $(\Sk,0)$ is orientable. 
\end{lemma}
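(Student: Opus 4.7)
My plan is to show that the morphism $\Phi$ is transverse to the smooth real-analytic surface $Z_k$ at every point of $\Sk \setminus \{0\}$, then deduce smoothness by the preimage theorem, and finally orient $\Sk$ bundle-theoretically by pulling back a canonical orientation of the normal bundle of $Z_k$ in $\C^2$. A preliminary observation is that, since $k$ is even, the parametrization $y \mapsto (|y|^k,y)$ is a real-analytic embedding of $\C$ into $\C^2$, so $Z_k$ is itself a smooth real-analytic $2$-submanifold of $\C^2$.

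The crucial intersection computation is that $\Delta_\Phi \cap Z_k = \{0\}$ as germs at the origin: Lemma \ref{lemma discriminant} gives $\Delta_\Phi \setminus V(x) \subset \{|x|>|y|^k\}$, which is disjoint from $Z_k \subset \{|x|=|y|^k\}$, while $V(x) \cap Z_k = \{0\}$. I would then split $\Sk \setminus \{0\}$ into two loci and argue transversality on each. On $\Sk \setminus (V(f) \cap V(g))$ one has $\Phi(p) \in Z_k \setminus \{0\} \subset \C^2 \setminus \Delta_\Phi$, hence $p \notin C_\Phi$, so $X$ is smooth at $p$ and $\Phi$ is a holomorphic submersion at $p$. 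On the remaining locus $V(f) \cap V(g) \setminus \{0\}$, companion conditions \ref{g smooth}, \ref{intersection non sauvage}, and \ref{intersection transverse}, together with the normality reduction of Section \ref{X normal} (which forces $\Sigma X \subset \Sigma V(f)$, so $V(g) \cap \Sigma X = \{0\}$), imply that $V(f)$ and $V(g)$ are smooth hypersurfaces of $X$ meeting transversally at $p$; therefore $X$ is smooth at $p$ and $d\Phi_p=(df_p,dg_p)$ is surjective. In both cases the preimage theorem produces a smooth real-analytic $4$-submanifold of $X$ near $p$, which gives the isolated-singularity statement.

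For orientability, at every $p \in \Sk \setminus \{0\}$ the complex-linear surjection $d\Phi_p$ identifies the real normal bundle of $\Sk$ inside $X$ with the pullback of the normal bundle of $Z_k$ inside $\C^2$. The latter carries a canonical orientation, defined so that together with the given orientation of $Z_k$ it recovers the complex orientation of $\C^2$. Pulling this orientation back along $d\Phi$ and combining, via the decomposition $T_p X = N_p \Sk \oplus T_p \Sk$, with the complex orientation of the smooth locus of $X$ yields a coherent orientation on the smooth $4$-manifold $\Sk \cap X_{\ek} \setminus \{0\}$, hence an orientation of $\Sk \cap X_{\ek}$.

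The main obstacle, to my eye, is the smoothness of $X$ itself at the points of $V(f) \cap V(g) \setminus \{0\}$: this is needed both to invoke the transverse preimage construction and to have a complex orientation of $X$ available near those points, and it is really the companion conditions together with the normality of $X$ that make this work. Once this is granted, the rest is a standard transverse-preimage argument combined with the fact that holomorphic submersions preserve complex orientations, so the local orientation recipe is canonical and the orientations glue automatically.
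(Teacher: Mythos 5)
Your argument is correct, and it rests on the same two inputs as the paper's proof: Lemma \ref{lemma discriminant} (so that $\Zk\setminus\{0\}$ avoids $\Delta_\Phi$ in a small enough neighbourhood) for the points of $\Sk$ off $\Vf\cap\Vg$, and the companion conditions for the points of $\Vf\cap\Vg\setminus\{0\}$. The execution differs in two places. First, where the paper handles the locus $\Vf\cap\Vg\setminus\{0\}$ by writing the explicit local equation $\{I(u,v,w)\,u=|v|^k\}$ and reading off smoothness as a graph, you fold both cases into one transversality/preimage-theorem argument; this forces you to justify that $X$ is smooth along $\Vf\cap\Vg\setminus\{0\}$, which you rightly single out as the crux, but your stated reason is off: normality is not what gives $\Sigma X\subset\Sigma\Vf$. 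The correct reason is the hypothesis $\Sigma X\subset\Vf$ together with the fact that a hypersurface through a singular point $x$ of $X$ is itself singular there (its Zariski tangent space has dimension at least $\dim T_xX-1\geq 3>2$); then condition \ref{intersection non sauvage} gives $\Vg\cap\Sigma\Vf=\{0\}$, hence $\Vg\cap\Sigma X=\{0\}$, so the relevant points are smooth points of $X$ at which $\Vf$ and $\Vg$ are smooth and transverse, and $d\Phi=(df,dg)$ is surjective as you claim. Note that the paper's proof tacitly uses the same fact the moment it takes coordinates $(u,v,w)$ at such a point, so with this repair your argument is no less rigorous than the original. Second, your orientation is built at every point of $\Sk\setminus\{0\}$ by pulling back the normal orientation of $\Zk\subset\C^2$ through the submersion $\Phi$ and combining it with the complex orientation of the smooth locus of $X$, whereas the paper orients only the fibration locus $\Sk\setminus(\Vf\cap\Vg)$ by a base-times-fiber product and extends implicitly across the real-codimension-two set $\Sk\cap\Vf\cap\Vg$. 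The two recipes agree up to an overall sign fixed by conventions (all factors are even-dimensional), so both establish orientability; yours avoids the extension step, while the paper's exhibits directly the base--fiber product structure that is reused in the later orientation discussions (Discussions \ref{orient gen}, \ref{orient plus} and \ref{orient moins}).
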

\begin{proof}
Thanks to Lemma \ref{lemma discriminant}, the variety $\Sk\setminus (\Vf \cap \Vg)\cap \X_{\ek}$ is a smooth locally trivial fibration over $\Zk\setminus \{0\}\cap \Phi(\X_{\ek})$, with fiber a smooth complex curve of the form $f^{-1}(x_0)\cap g^{-1}(y_0)\cap \overline{\X_{\ek}}$. 

The smoothness of $\Sk\cap X_{\ek}$ at $\Vf \cap \Vg \setminus \{0\}$ comes from the fact that on $\Vf \cap \Vg \setminus \{0\}$, because of the conditions of Definition \ref{companion}, $\Vf$ and $\Vg$ are smooth surfaces intersecting transversally. Hence if $p \in \Vf \cap \Vg \setminus \{0\}$, there are local coordinates $(u,v,w)$ at $p$ such that $\Vf=\{u=0\}$ and $\Vg=\{v=0\}$. Locally, $$ \left\{ \begin{array}{l}
f=I_f(u,v,w) \cdot u\\

g=I_g(u,v,w)\cdot v
\end{array} \right. $$ where $I_f,I_g$ are units at $p$. Hence, at $p$, $$\Sk =\{I(u,v,w)u=|v|^k\}$$ where $I$ is a unit at $p$. This shows that $\Sk$ is smooth along $\Vf \cap \Vg \setminus \{0\}$. 

Now, an orientation of $\Sk\setminus (\Vf\cap \Vg)$ is built by orienting the fibers using the complex structure, and then taking the product of the orientations of the base and fibers. This provides an orientation of $\Sk$.
\end{proof}

From now on, we fix a representative of the link of $\Sk$ to be $$\partial \Sk := \Sk\cap \Sp_\ek.$$

Let us now build an appropriate representative of the Milnor fiber of $f$.

Condition \ref{condeps3} of Definition \ref{def eps small} on $\ek$ implies that $\exists$ $\delta_1,\delta_2>0$ such that $\Phi_{|S_\ek}$ induces a smooth locally trivial fibration above $D_{\delta_1}\times D_{\delta_2}\subset \C^2$. Note that it implies that $D_{\delta_1}\times D_{\delta_2}\subset \Phi(X_\ek)$.

Set $\dk > 0$ to be as in Theorem \ref{thm fibration} relatively to $\varepsilon$, and such that $\dk \leqslant \delta_1$ and $\dk^{1/k} \leqslant \delta_2$. The representative of the Milnor fiber that we choose will be $$F:=\overline{\{f=\dk\}\cap X_\ek}$$ with boundary $$\partial F=f^{-1}(\dk)\cap S_\ek.$$

We can now state and prove the following:

\begin{prop}
\label{prop diffeomorphic boundaries}
In the setting described in this section, the oriented manifolds $\partial \Sk$ and $\partial F$ are orientation-preserving diffeomorphic.
\end{prop}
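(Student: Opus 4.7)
The plan is to realize $\partial\Sk$ and $\partial F$ as the endpoint fibers of a proper submersion $\pi\colon W \to [0,1]$ in $S_\ek$, and then apply Ehresmann's fibration theorem. To this end, consider the real-analytic family of smooth surfaces in $\C^2$
$$C_t := \bigl\{(x,y)\in D_{\delta_1}\times D_{\delta_2} : x = (1-t)|y|^k + t\dk\bigr\}, \qquad t\in[0,1],$$
interpolating between $C_0 = \Zk\cap(D_{\delta_1}\times D_{\delta_2})$ and $C_1 = \{\dk\}\times D_{\delta_2}$. Form the total space
$$W := \bigl\{(p,t)\in S_\ek\times[0,1] : f(p) = (1-t)|g(p)|^k + t\dk\bigr\},$$
whose fiber over $t$ is $M_t\times\{t\}$, where $M_t := \Phi^{-1}(C_t)\cap S_\ek$. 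Assuming $\ek$ is small enough that $|g|\leq\delta_2$ on $\overline{X_\ek}$, one has $M_0 = \partial\Sk$ and $M_1 = \partial F$, and $\pi$ is automatically proper by compactness of $S_\ek\times[0,1]$.

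Submersivity of $\pi$ follows from a short differential computation: at any point $(p,t)\in W$ where $\Phi|_{S_\ek}$ is itself a submersion, the differential of $H(p,t) := f(p) - (1-t)|g(p)|^k - t\dk$ in the $S_\ek$-direction surjects onto $\C$ (pick $\xi\in T_pS_\ek$ with $d\Phi(\xi) = (\alpha,0)$ for any $\alpha$), so one can find a tangent vector of the form $(\xi,1)$ to $W$. The remaining key point is therefore to verify that $C_t\cap\Delta_\Phi = \emptyset$ for every $t\in[0,1]$. For $t=0$, Lemma~\ref{lemma discriminant} gives $\Delta_\Phi\setminus V(x)\subset\{|x|>|y|^k\}$ while $C_0$ lies on $\{|x|=|y|^k\}$; for $t>0$, the curve $C_t$ meets $\{|x|>|y|^k\}$ only where $|y|^k<\dk$, and a direct Puiseux-expansion argument on each branch of $\Delta_\Phi$ using the Newton-slope bound $k > s_0$ shows that no branch meets $C_t$ in this range, possibly after further shrinking $\delta_1$, $\delta_2$, $\ek$.

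Granted this, Ehresmann's theorem produces a diffeomorphism $\varphi\colon\partial\Sk\to\partial F$. For orientations, endow each $M_t$ with the product of the orientation of $C_t$ (real-analytically interpolating from the $y$-axis pullback orientation on $\Zk$ to the complex orientation of $C_1$, both realised as pullbacks via projection onto the second coordinate) with the induced orientation on the circle fibers of $\Phi|_{S_\ek}$ coming from the boundary orientation of $S_\ek\subset X_\ek$. At $t=0$ this agrees with the orientation of $\partial\Sk$ given by Lemma~\ref{orientation Sk}, and at $t=1$ with the complex boundary orientation of $F$; the lifted Ehresmann isotopy, starting at the identity on $M_0$, is therefore orientation-preserving throughout. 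The principal obstacle is the discriminant-avoidance check in the range $t\in(0,1)$: Lemma~\ref{lemma discriminant} only controls $\Delta_\Phi$ from above by $\{|x|>|y|^k\}$, so one must verify by a finer Puiseux analysis that each branch of $\Delta_\Phi$ stays uniformly clear of the family $C_t$.
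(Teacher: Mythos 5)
The place where your argument genuinely breaks is the identification $M_0=\partial \Sk$ and $M_1=\partial F$. For this you need $\Phi(\partial\Sk)$ and $\Phi(\partial F)$ to lie in $D_{\delta_1}\times D_{\delta_2}$, i.e. $|g|\leqslant \delta_2$ on both boundaries, and your fix (``assume $\ek$ small enough that $|g|\leqslant\delta_2$ on $\overline{X_\ek}$'') is circular: $\delta_1,\delta_2$ are produced \emph{after} $\ek$, as radii of a polydisc over which $\Phi_{|S_\ek}$ is a locally trivial fibration, so shrinking $\ek$ forces re-choosing them, and there is no reason the new $\delta_2$ dominates $\max_{S_\ek}|g|$. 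In general it cannot: over values of $g$ comparable to its maximum on $S_\ek$ the fibers of $\Phi$ become tangent to the sphere, so no polydisc containing $\Phi(S_\ek)$ consists of regular values of $\Phi_{|S_\ek}$. Consequently your family $W$ only connects the parts of $\partial\Sk$ and $\partial F$ lying over the small polydisc. This is exactly the portion of the work that the paper's proof does \emph{before} any interpolation: two gradient-flow isotopies (of $|g|$ on $F\cap\{|g|\geqslant\dk^{1/k}\}$ and on $\Sk\cap\{|g|\geqslant\dk^{1/k}\}$, using $C_\Phi\cap\{|g|\geqslant\dk^{1/k}\}=\emptyset$) retract $F$ and $\Sk$ onto the manifolds with corners $\Fs$ and $\Sks$, trimming off the region $\{|g|\geqslant\dk^{1/k}\}$; only the remaining lateral pieces are then matched inside $S_\ek$, which is the analogue of your Ehresmann family. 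Those first two isotopies move through the interior of $X_\ek$, so they cannot be recovered inside your total space $W\subset S_\ek\times[0,1]$; without them your diffeomorphism is between proper subsets of the two boundaries.

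Moreover, the condition you single out as the principal obstacle, $C_t\cap\Delta_\Phi=\emptyset$ for $t\in(0,1)$, is both false in general and not the right condition. Already for $X=\C^3$, $f=x_1^2+x_2^2+x_3^2$, $g=x_1$, the discriminant contains the branch $\{x=y^2\}$, and for every $t\in(0,1)$ the equation $y^2=(1-t)|y|^k+t\dk$ has a real solution $y$ of order $\sqrt{t\dk}$ well inside $D_{\delta_2}$, so every intermediate curve $C_t$ meets $\Delta_\Phi$; no finer Puiseux analysis will remove this. What your submersivity argument actually needs is that $\Phi_{|S_\ek}$ be submersive along $W$, and that is guaranteed not by avoiding $\Delta_\Phi$ (the critical values of the restriction to the sphere are tangency values, not discriminant values) but by keeping the whole family $C_t$ inside the polydisc $D_{\delta_1}\times D_{\delta_2}$ over which $\Phi_{|S_\ek}$ is a locally trivial fibration --- which is precisely how the paper justifies its third isotopy. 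So your interpolation idea is essentially the paper's last step and can be made to work over the polydisc, but as written the proposal has a genuine gap: the endpoint fibers are not $\partial\Sk$ and $\partial F$, and repairing this requires adding the two gradient-flow isotopies (or an equivalent argument in the interior of $X_\ek$), while the discriminant-avoidance requirement should be discarded in favour of the fibration condition.
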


\begin{proof}
We are going to build an isotopy from $\partial F$ to $\partial \Sk$ in the smooth locus $\sm{X_\varepsilon}$ of $X_\varepsilon$, using vector fields. This isotopy will be built in three steps, represented schematically in figure %\ref{fig:Retractions}.

In the sequel, we will use the notion of gradient for some functions defined on parts of $\sm{X_\varepsilon}$. Take an embedding $X_\varepsilon \subset \C^N$. Then this gradient will be defined with respect to the restriction of the usual hermitian form of $T_p\C^N$ on $T_p\sm{X_\varepsilon}$, for any $p\in \sm{X_\varepsilon}$.

First, observe that there is an isotopy from $F$ to the manifold with corners \begin{equation}\label{eq F corners}
\Fs:=\{f=\delta\}\cap \{|g|\leqslant \delta^{1/k}\} \cap \overline{X_\varepsilon}
\end{equation}

This isotopy follows from integrating the vector field provided by the gradient of the function $$|g|\colon F\cap \{|g|\geqslant\delta ^{1/k}\} \rightarrow \R_+$$ To see that this vector field is everywhere non zero on the compact set $F\cap \{|g|\geqslant\delta ^{1/k}\}$, observe that the hypothesis $k \geq k_0$ ensures that $F\cap \{|g|\geqslant\delta ^{1/k}\} \cap C_\Phi=\emptyset$, which means that the levels of $f$ and $g$ intersect transversally everywhere on $F\cap \{|g|\geqslant\delta ^{1/k}\}$, guaranteeing that the same is true for $f$ and $|g|$.

In the same spirit, there is an isotopy from $\Sk$ to the manifold with corners \begin{equation}\label{eq Sk square}\Sks:=\Sk\cap \{|g|\leqslant\delta ^{1/k}\}\cap X_\varepsilon
\end{equation}
This one is obtained, again, by integrating the gradient of the function $$|g|\colon \Sk\cap \{|g|\geqslant\delta ^{1/k}\} \rightarrow \R_+$$ which is an everywhere non-zero vector field on the compact $\Sk\cap \{|g|\geqslant\delta ^{1/k}\}$.

Now, observe that we have the following two decompositions for the boundaries :
$$
\partial \Fs= \left(\{f=\delta\}\cap \{|g|= \delta^{1/k}\} \cap \overline{X_\varepsilon}\right) \bigcup\limits_{\{f=\delta\}\cap \{|g|= \delta^{1/k}\} \cap S_\varepsilon} \left(\{f=\delta\}\cap \{|g|\leqslant \delta^{1/k}\} \cap S_\varepsilon\right)
$$

$$
\partial \Sks= \left(\{f=\delta\}\cap \{|g|= \delta^{1/k}\} \cap \overline{X_\varepsilon}\right) \bigcup\limits_{\{f=\delta\}\cap \{|g|= \delta^{1/k}\} \cap S_\varepsilon} \left(\{f=|g|^k\}\cap \{|g|\leqslant \delta^{1/k}\} \cap S_\varepsilon\right)
$$

We now build an isotopy on $S_\varepsilon$ taking the second part of the decomposition of $\partial \Fs$ to the one of $\partial \Sks$, and preserving $\{f=\delta\}\cap \{|g|= \delta^{1/k}\} \cap S_\varepsilon$. It will, again, be obtained by integrating an everywhere non-zero vector field. This one is built in the following way:

At $p\in \{Im(f)=0\}\cap \{|g|^k\leqslant Re(f)\leqslant \delta\}\cap S_\varepsilon$, the vector is given by the gradient of the function $$Re(f)\colon \{g=g(p)\} \cap S_\varepsilon \rightarrow \R$$

The fact that this vector field is nowhere zero on the compact $\{Im(f)=0\} \cap \{|g|^k\leqslant Re(f)\leqslant \delta\}\cap S_\varepsilon$ comes from the condition asked on $\delta$. Indeed $\delta$ is chosen so that $\Phi$ induces a smooth locally trivial fibration over $D_\delta\times D_{\delta^{1/k}}$, ensuring that at a point of this compact, the levels of $f$ and $g$ intersect transversally, guaranteeing the same for the levels of $Re(f)$ and $g$.

The combination of these three isotopies provides an isotopy between $\partial \Sk$ and $\partial F$, hence an orientation-preserving homeomorphism. To conclude this proof, it is enough to invoke the fact that two $3$-manifolds that are orientation-preserving homeomorphic are also orientation-preserving diffeomorphic. See e.g. Munkres \cite{Mun60}.

Let us note however that if one wants to build a diffeomorphism, it can be obtained using the three isotopies that we composed in this proof, and smoothing the corners at every step. %For more details on this idea of smoothing corners, see \cite{DH}.

\end{proof}

\section{The tower of morphisms}\label{section tower}
Proposition \ref{prop diffeomorphic boundaries} implies that the computation of $\partial F$ can be replaced by the one of $\partial \Sk$. We will carry out this computation through a resolution of $(\Sk,0)$ with particular properties. Namely, in the next sections \ref{section first modif} to \ref{resolution step}, we build step by step a proper morphism $$\Pi\colon (\Sb,\Ex)\rightarrow (\Sk,0)$$ where $\Ex$ is a simple configuration of compact smooth real analytic oriented surfaces (see Definition \ref{def simple conf of surfaces, plumbing graphs.}) in a oriented real analytic manifold $\Sb$ of dimension $4$. 

This morphism will be an analytic isomorphism outside $E$, and will therefore allow us to identify the boundary $\partial \Sk$ with its preimage by $\Pi$. We then prove that this preimage can be seen as the boundary of a tubular neighbourhood of $E$ in $\Sb$, which is a graph manifold determined by the configuration $E$, as explained in Theorem \ref{thm corresp boundary plumbing graph}.

At every step, the preimage of the origin will be a configuration of real analytic oriented surfaces, that we will represent by its dual graph, with some decorations. Note that until the penultimate step, some of these surfaces may be singular. However at every step it will be possible to associate a dual graph to the preimage of the origin.

The first level of this morphism will come from a modification $\rx$ of $X$, respecting certain conditions relatively to $f$ and $g$. We will consider the strict transform $\Skt$ of $\Sk$ by $\rx$. Denote the restriction of $\rx$ to $\Skt$ by $$\tr\colon \left(\Skt,\Courbes\right) \rightarrow \left(\Sk,0\right).$$

The second step is the normalization of $\Skt$ $$\norm \colon \left(\Sn,\Courbesn\right) \rightarrow \left(\Skt,\Courbes\right).$$

Then, using local equations for $\Skt$, we build local morphisms $\kappa_p$ from complex surfaces to a finite number of open sets covering $(\Skt,\Courbes)$, and use their liftings to the normalizations of the source and the target to build a global morphism $$\K \colon \left(\St,\Courbesk\right) \rightarrow \left(\Sn,\Courbesn\right)$$ from a $4$-dimensional variety $\St$ to $\Sn$, that will be a diffeomorphism outside $\Courbesn$.

The variety $\St$ will have controlled isolated singularities, namely of Hirzebruch-Jung type. A standard resolution of such singularities is explained in Appendix \ref{appendix resol HJ}, and requires only few data to be computed, leading to the morphism of the final step$$\pi \colon \left(\Sb,E\right)\rightarrow \left(\St,\Courbesk\right).$$

The construction of the composed morphism $\Pi$ is summarized in the following diagram
\begin{center}

\begin{tikzpicture}

\filldraw 
(3,-1.3) circle (0pt) node[] {$(\Zk,0)$} 
(3,0) circle (0pt) node[] {$(\Sk,0)$} 
(3,1.3) circle (0pt) node[] {$\left(\Skt,\Courbes\right)$} 
(3,2.9) circle (0pt) node [] {$\left(\Sn,\Courbesn\right)$}
(3.2,4.4) circle (0pt) node[] {$\left(\St,\Courbesk\right)$} 
(3,5.5) circle (0pt) node[] {$\left(\Sb,E\right)$}
(3.9,0) circle (0pt) node[] {$\subset$}
(3.9,1.3) circle (0pt) node[] {$\subset$}
(3.9,-1.3) circle (0pt) node[] {$\subset$}
(4.8,0) circle (0pt) node[] {$(\X,0)$}
(4.8,1.3) circle (0pt) node[] {$(\Xt,\Do)$}
(4.8,-1.3) circle (0pt) node[] {$(\C^2,0)$}
(6.4,0) circle (0pt) node[] {$(\R^+,0)$}
;
\path[-stealth]
(3,5.2) edge node[right]{$\resol$} (3,4.6)
(3,4.1) edge node[right]{$\K$} (3,3.2)
(3,2.4) edge node[right]{$\norm$} (3,1.7)
(3,1) edge node[right]{$\tr$} (3,0.2)
(3,-0.2) edge node[right]{$\Phi$} (3,-1)
(4.6,1) edge node[right]{$\rx$} (4.6,0.2)
(4.6,-0.2) edge node[right]{$\Phi$} (4.6,-1)
(5.3,0) edge node[below]{$\dist$} (5.8,0)
(2.3,5.5) edge [bend right=60] node [left] {$\Pi$} (2.4,0)
;

\end{tikzpicture}

\end{center}

\subsection*{Algorithmic aspect} 
We will decorate the different dual graphs
$$\Gc,\Gk,\Gresol$$ 
obtaining the decorated graphs 
$$\Gcd,\Gkd,\Gresold. $$
The decorations of these graphs will be of different natures. 

They are built and decorated in order to get the plumbing graph
 $$\Gresold=\Gdp{E}{\Sb}$$
for the boundary of a tubular neighbourhood of $E$ in $\Sb$, which is the aim of this study.

It is very important to notice that, at every step, some of the information needed to compute a decorated graph is encoded in the previous one. Namely:
\hfill
\begin{itemize}
\item $\Gresold$ is entirely determined by $\Gkdt$.
\item $\Gkdt$ is entirely determined by $\Gcdt$ and some information relative to the singularities of $X$, namely the collection of switches introduced in Subsection \ref{subsection non rational curves}, and the additional covering data mentioned in Subsection \ref{subsection graphe gkdt}. In general, accessing that information may represent a difficult task, for which one should have, among other things, a good understanding of the structure of the initial modification $\rx$. However this does not stop us from carrying out the proof of the general result. In some cases, that additional information is unnecessary, as for example in the case where $X$ is smooth (see \cite{NemSzi12}), or in the toric case (see \cite{cur19.2}).
\end{itemize}

\textbf{In conclusion}, this relation between the graphs, which in some cases allows to deduce one graph from the previous one, permits us to build a method for the computation of $\partial F$, whose starting point will be the decorated graph $\Gcdt$ together with the required additional information relative to the singularities of $X$. The construction and definition of $\Gcdt$ and $\Skt$ is the object of the next section.
\section{First modification of $\Sk$}\label{section first modif}
Recall that a \textbf{modification} of a complex analytic variety is a proper bimeromorphic morphism whose target is the given variety. We introduce in this section a complex analytic modification $\rx:\Xt\rightarrow \X$ of $X$, that will verify some conditions relative to $f$ and $g$:
\begin{defn}{\bf (Modification adapted to $\Phi$.)}\label{modif adaptee}\index{Modification!adapted to a map $\Phi$}

\noindent A modification $\rx\colon (\Xt,\rx^{-1}(0)) \rightarrow (X,0)$ is said to be \textbf{adapted} to the morphism $\Phi=(f,g)$, where $g$ is a companion of $f$, if the two following conditions are simultaneously satisfied:
\begin{enumerate}
\item \label{modif locus} The modification $\rx$ is an isomorphism outside of $(\Vf\setminus \Vg) \cup \{0\}$. Or equivalently, $\rx$ may not be an isomorphism only at $\Vf\setminus \Vg$ or $\{0\}$.

\item \label{sncd} Denote $\Dt:=\rx^{-1}(\V)$.
Denote by $\Do$ the union of the irreducible components of $\Dt$ sent on the origin by $\rx$. Denote by $\Dfex$ the union of the components of $\mathbb{D}$ sent on curves in $V(f)$, and by $\Vft$ and $\Vgt$ the strict transforms of $\Vf$ and $\Vg$ by $r_X$ respectively. Finally, denote $\Df:=\Dfex \cup \Vft$.

We define two complex curves in $\Xt$ by
$$\Courbest:=\left(\Df \cap \Do\right) \cup (\Df \cap \Vgt).$$ and $$\Courbes:=\left(\Df \cap \Do\right)\cup \left(\Dfex \cap \Vgt\right),$$ the union of the compact irreducible components of $\Courbest.$ 

In order to emphasize that the global configuration of the irreducible components of the curves $\Courbes$ and $\Courbest$ is essential for us, we will say that the two curves are {\bf curve configurations}.

With these notations, the second condition imposed on $\rx$ is that the total transform $$\Dt:=\rx^{-1}(\V)$$ of $\V$ shall be a simple normal crossings divisor at every point of $\Courbest$.
\end{enumerate}
\end{defn}

The fact that $\rx$ does not modify $\Vg$ except at the origin leads to:
\begin{lemma}\label{lemma preim origin rx}
The preimage of the origin by $\rx$ is $$\rx^{-1}(0)=\Do\cup (\Dfex\cap \Vgt).$$
\end{lemma}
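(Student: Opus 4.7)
The plan is to prove the two inclusions separately, using Condition (\ref{modif locus}) of Definition \ref{modif adaptee} --- which says that $\rx$ is an isomorphism outside $(\Vf\setminus\Vg)\cup\{0\}$ --- as the central ingredient.

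For the inclusion $\supseteq$, the containment $\Do\subseteq\rx^{-1}(0)$ is immediate from the definition of $\Do$. For a point $p\in\Dfex\cap\Vgt$, pick a component $C$ of $\Dfex$ through $p$; by definition $\rx(C)$ is a curve in $\Vf$, so $\rx(p)\in\Vf$, while $p\in\Vgt$ gives $\rx(p)\in\Vg$, and hence $\rx(p)\in\Vf\cap\Vg$. If one had $\rx(p)\neq 0$, then $\rx(p)\in\Vg\setminus\{0\}$ and Condition (\ref{modif locus}) would force $\rx$ to be an isomorphism at $\rx(p)$; but $p$ lies on the $2$-dimensional exceptional component $C$, whose fibre over $\rx(p)$ is positive-dimensional, contradicting the isomorphism property. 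Therefore $\rx(p)=0$.

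For the inclusion $\subseteq$, I would first show that $\rx^{-1}(\Vg)=\Vgt\cup\Do$ as analytic sets. Its irreducible components are either components of the strict transform $\Vgt$ or exceptional components of $\rx$ whose image sits inside $\Vg$; by Condition (\ref{modif locus}), every exceptional component of $\rx$ maps into $(\Vf\setminus\Vg)\cup\{0\}$, and intersecting with $\Vg$ leaves only $\{0\}$, so the exceptional components mapping into $\Vg$ are exactly those of $\Do$. Since $0\in\Vg$, this yields $\rx^{-1}(0)\subseteq\Vgt\cup\Do$, and hence every $p\in\rx^{-1}(0)\setminus\Do$ automatically lies on $\Vgt$. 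It then remains to check that such a $p$ must lie on some component of $\Dfex$, giving $p\in\Dfex\cap\Vgt$: this follows from the fact that in the situation at hand every point of $\rx^{-1}(0)$ sits on one of the $2$-dimensional exceptional components of $\Dt$, namely on $\Do\cup\Dfex$, and by hypothesis $p\notin\Do$.

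I expect the main obstacle to be this last step, which amounts to ruling out that $p\in\Vgt\cap\rx^{-1}(0)$ could be an isolated isomorphism-preimage of $0$ lying on $\Vgt$ alone and avoiding both $\Do$ and $\Dfex$. Eliminating this possibility should follow from the SNC Condition (\ref{sncd}) together with the fact that the adapted modification $\rx$ has been chosen non-trivially enough above $0$; once this is granted, combining the two inclusions gives the stated equality $\rx^{-1}(0)=\Do\cup(\Dfex\cap\Vgt)$.
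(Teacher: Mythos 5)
Your inclusion $\supseteq$ and the first reduction in $\subseteq$ are correct: every component of the hypersurface $(g\circ\rx)^{-1}(0)$ either dominates $\Vg$, hence lies in $\Vgt$, or is exceptional with image contained in $\Vg\cap\left((\Vf\setminus\Vg)\cup\{0\}\right)=\{0\}$, so that $\rx^{-1}(\Vg)=\Vgt\cup\Do$ and $\rx^{-1}(0)\subseteq\Do\cup\Vgt$. This is, in substance, all that the paper itself offers: Lemma \ref{lemma preim origin rx} is stated there without proof, as a direct consequence of the fact that $\rx$ does not modify $\Vg$ outside the origin.

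The genuine gap is the step you yourself flag: that every $p\in\rx^{-1}(0)\setminus\Do$ lies on a component of $\Dfex$. Your proposed justification (Condition \ref{sncd} plus ``$\rx$ chosen non-trivially enough above $0$'') cannot work as stated. First, Condition \ref{sncd} only constrains $\Dt$ at points of $\Courbest$, and a hypothetical bad point $p\in\Vgt\cap\rx^{-1}(0)$ lying on no component of $\Dfex\cup\Do$ is not known to belong to $\Courbest$, so the SNC hypothesis is silent there. Second, Definition \ref{modif adaptee} alone does not contain the statement: for $\X=\C^3$, $f=xy$, $g=z$ the identity map is an adapted modification (Condition \ref{modif locus} holds vacuously and $\Dt=\{xyz=0\}$ is everywhere SNC), yet $\Do=\Dfex=\emptyset$ while $\rx^{-1}(0)=\{0\}$; moreover, over a merely normal, possibly non-$\mathbb{Q}$-factorial $\X$, purity of the exceptional locus may fail, so a small exceptional curve over $0$ contained in $\Vgt$ and meeting no component of $\Dfex\cup\Do$ is not excluded by the definition either. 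What actually closes the argument is a property of the modifications the paper really uses (Lemma \ref{Resolution lemma} applied with $U\subseteq\X\setminus\si{\Vf}$ and $0\in\si{\Vf}$): the fibre $\rx^{-1}(0)$ is contained in the union of the exceptional divisors of $\rx$. Granting this, your own image argument finishes the proof: an exceptional divisor meeting $\rx^{-1}(0)$ has image $\{0\}$ or an irreducible curve inside $(\Vf\setminus\Vg)\cup\{0\}\subseteq\Vf$, hence belongs to $\Do\cup\Dfex$; thus $\rx^{-1}(0)\subseteq\Do\cup\Dfex$, which combined with $\rx^{-1}(0)\subseteq\Do\cup\Vgt$ yields the stated equality. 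You should therefore make this divisoriality of $\rx$ over the origin explicit (and verify it for the chosen $\rx$) rather than expecting it to follow from the SNC condition.
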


%\begin{ex}
%Figure \ref{fig: configuration surfaces} shows an example of divisor $\Dt$. The components of $\Df$ are represented in blue, those of $\Dg$ in green, and those of $\Do$ in red. Full lines represent irreducible components of $\Courbest$, and the arrow represents a non-compact component of $\Df \cap \Dg$. The points of intersection of two irreducible components of $\Courbest$ are decorated using signs, anticipating the convention introduced in Definition \ref{def convention signes}.
%\end{ex}

%\begin{center}
%
%\begin{figure}[h]
%\hspace{1cm}
%        \includegraphics[totalheight=7cm]{configuration.eps}
%    \caption{The divisor $\mathbb{D}$, and the curve configurations $\Courbes$, $\Courbest$.}
%    \label{fig: configuration surfaces}
%
%\end{figure}
%\end{center}

%\begin{prop}\label{prop configurations connected}(See Proposition \ref{prop fibre phi connected} and \cite[Corollary 7.1.5]{NemSzi12}). If the germ $(X,0)$ is a complete intersection, the curves $\Courbes$ and $\Courbest$ are connected.
%\end{prop}

\begin{rk}
The curves $\Courbes$ and $\Courbest$ will play a key role in the sequel. They are in general complicated to compute, but in the toric setting, we can express them with only combinatorial considerations (see \cite{cur19.2}).
\end{rk}

\begin{defn}\label{orientations courbes initiales}
Orient the irreducible components of $\Courbes$ and $\Courbest$ by their complex structures in $\Xt$.
\end{defn}

Condition \ref{modif locus} of Definition \ref{modif adaptee} implies that what we build is not an embeddded resolution of the surface $\V$. Indeed we do not modify it along $\Vf\cap \Vg \setminus \{0\}$, where it has simple singularities, i.e. it looks locally like a transverse intersection of smooth surfaces. 

The existence of a modification of $X$ adapted to $\Phi$ is guaranteed by the following

\begin{lemma}{\bf (Resolution Lemma), \cite[p.633]{Sza94}}\label{Resolution lemma}

Let $X$ be an irreducible variety over an algebraically closed field of characteristic $0$, $V$ a codimension one subvariety of $X$, and $U$ an open subset of the smooth locus of $X$ such that $U\cap V$ has smooth components crossing normally. Then there is a projective morphism $\rx \colon \Xt \rightarrow X$ which satisfies the following conditions:
\begin{enumerate}
\item $\rx$ is a composition of blowing ups of smooth subvarieties.
\item $\rx$ induces an isomorphism over $U$.
\item $\Xt$ is smooth.
\item $\rx^{-1}(V\cup(X\setminus U))$ is a divisor. Moreover, it has smooth components crossing normally.
\end{enumerate}
\end{lemma}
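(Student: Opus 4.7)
The plan is to derive this lemma from the standard log-resolution theorem of Hironaka (in its modern functorial form), the key point being that the centers of the successive blowing-ups can be arranged to avoid $U$.

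First I would apply Hironaka's desingularization theorem to the pair $(X, V \cup (X\setminus U))$. In characteristic zero this yields a morphism $X_1 \to X$ which is a finite composition of blowing-ups of smooth subvarieties, such that $X_1$ is smooth and the total transform of $V \cup (X\setminus U)$ is a simple normal crossings divisor. So Conditions (1), (3) and (4) are immediately available; the entire content is the statement that this can be done without touching $U$, i.e.\ that Condition (2) holds and is compatible with the others.

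Next I would invoke the \emph{functoriality} (sometimes called equivariance or strong desingularization) property satisfied by modern resolution algorithms, as developed for instance by Bierstone--Milman, Villamayor, Włodarczyk, Kollár and Szabó himself. In these versions, the center chosen at each stage is intrinsically defined by a local invariant which, at a point, is minimal (and hence triggers no blow-up) exactly when the ambient space is already smooth and the distinguished divisor is already simple normal crossings at that point. By hypothesis, every point of $U$ satisfies these conditions: $U$ lies in the smooth locus of $X$, the component $X \setminus U$ of the boundary does not meet $U$, and the remaining component $V$ meets $U$ as a smooth normal crossings divisor. Therefore the first center is disjoint from $U$, the blow-up is an isomorphism over $U$, and the preimage of $U$ in $X_1$ again satisfies the same hypotheses with respect to the updated boundary. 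By induction on the length of the resolution tower, no center ever meets the (successive preimages of the) open set $U$, which gives Condition (2).

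I expect the main subtlety to be precisely this compatibility argument: one has to know that the version of Hironaka used produces centers lying entirely inside the \emph{non-SNC locus} of the given boundary, not merely inside the singular locus of $X$. Once this functorial statement is quoted (as Szabó does in \cite{Sza94}), the rest of the verification is formal. No additional computation is needed; in particular, projectivity of $\rx$ follows because a finite composition of blowing-ups along proper (indeed closed) smooth centers is projective.
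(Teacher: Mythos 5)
The paper does not prove this statement at all: it is quoted verbatim as Szab\'o's Resolution Lemma, with a bare citation to \cite[p.633]{Sza94}, and is then simply applied (with $U = X\setminus \si{\Vf}$) to obtain a modification adapted to $\Phi$. So there is no internal proof to compare yours against; the relevant comparison is with the argument in the cited source, and your sketch is essentially that argument: run an embedded/log resolution of the pair $(X, V\cup(X\setminus U))$ by smooth blow-ups and check that the centers can be taken disjoint from (the successive preimages of) $U$. You are also right to isolate the genuine subtlety, namely that one needs the centers to lie in the locus where the pair fails to be smooth with simple normal crossings boundary, not merely in $\Sing X$. One caution, though: your inductive step as written assumes that the resolution invariant of the chosen algorithm is minimal precisely at points where the ambient space is smooth and the accumulated boundary is SNC, and this is \emph{not} automatic for every strong desingularization algorithm (some algorithms, because their invariant remembers the history of exceptional divisors, do blow up points where the divisor is already simple normal crossings). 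So the argument is only complete once you quote a version of resolution that explicitly has the ``centers inside the non-SNC locus'' property — which is exactly what Szab\'o establishes, and what later treatments (e.g.\ Bierstone--Milman's work on the role of exceptional divisors) address. With that reference made precise, your outline is correct, and the remaining points (projectivity of a finite composition of blow-ups along closed smooth centers, conditions (1), (3), (4) coming directly from the resolution statement) are indeed formal.
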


In our context, one can for example take the open subset $U$ to be $X\setminus  \si{\Vf}$.
\begin{rk}\label{remark modif}
\begin{enumerate}
\item The union of the components of $\Df$ is what we call the \textbf{mixed transform} of $\Vf$ by $\rx$. Indeed it is not exactly the strict transform of $\Vf$ by $\rx$, as it also contains the components of $\Dfex$ that are in the exceptional divisor, in particular the preimage of the singular locus $\si{\Vf}$ of $\Vf$.
\item Note that $\Vgt=\overline{\rx^{-1}(\Vg\setminus 0)}$ because the origin is the only point of $\Vg$ at which we allow $\rx$ not to be an isomorphism.
\item The Resolution Lemma \ref{Resolution lemma} implies that in general one can always build a modification $\rx$ such that $\Dt$ is globally a Simple Normal Crossings Divisor, which will imply Condition \ref{sncd} on $\rx$. However, what we really need in order to proceed is this weaker condition. 
\item In the same spirit, unlike what is required in \cite[6.1.2]{NemSzi12}, we allow the morphism $\rx$ to modify some curves in $(\Vf \setminus \Vg)\cup \{0\}$ along which $\Vf$ may be smooth. This point and the previous one will be important for our analysis of Newton nondegenerate surface singularities in the toric setting.
\item We get a partition of the set of irreducible components of the total transform $\Dt$ in three parts: the set of irreducible components of $\Df,\Vg$ or $\Do$. By abuse of notation, we will sometimes refer to an irreducible component of $\Dt$ as ``in'' ($\in$) one of these three surfaces, instead of ``subset of'' ($\subset$).
\end{enumerate}
\end{rk}
\begin{nota}\label{multiplicities}
Denote $$\ft:=f \circ \rx,~ \gt:=g \circ \rx : \Xt \rightarrow \C$$ the pullbacks of $f$ and $g$ to $\Xt$. In the sequel, if $D_i$ is an irreducible component of $\Dt$, we will denote $$m_i:=mult_{D_i}(\ft),~ n_i:=mult_{D_i}(\gt)$$ the respective multiplicities of $\ft,\gt$ along this component.

Note that if $D_1\in \Vg$, then necessarily $m_1=0$ and $n_1=1$.
\end{nota}

\noindent The firt step of the construction of the desired morphism $\Pi$ is the modification of $\Sk$ obtained by restricting $\rx$ to the strict transform $$(\Skt,\Courbes):=\left(\overline{\rx^{-1}(\Sk\setminus \{0\})},\Courbes\right)\subset (\Xt,0).$$ of $\Sk$ by $\rx$. It will have non-isolated singularities, being from this point of view more complicated than $(\Sk,0)$. However we can domesticate these singularities. This is the point of the next section. 

\section{Normalization and local models}\label{section norm loc models}

In this section we give ``local complex models'' for the normalization $\Sn$ of $\Skt$, which will allow us to get back to the context of singularities of complex surfaces.

Denote $\tr:= {\rx}_{|\Skt} :\Skt\rightarrow \Sk$, and recall the notations $m_i,n_i$ introduced in Notation \ref{multiplicities}. 

\begin{prop} \label{preimage de 0 dans skt} Let $k\in 2\N$ be such that for every component $D_1\in \Do$, $kn_1>m_1$. Then:
\hfill
\begin{enumerate}

\item The preimage in $\Skt$ of the origin in $\Sk$ is \begin{equation}\label{Courbes im rec de 0}\tr^{-1}(0)=\Courbes=(\Df\cap \Do)\cup (\Dfex\cap \Vgt).
\end{equation}
\item The support of the total transform of $\Vg \cap \Sk$ by $\tr$ is equal to $\Courbest=(\Df\cap \Do)\cup(\Df\cap \Vg)$. In other words, denoting $$\gts:=g\circ \tr\colon \Skt\rightarrow \C$$ the pullback of $g$ to $\Skt$, we have \begin{equation}\label{Courbest transf tot g}
\gts^{-1}(0)=\Courbest.
\end{equation}
\end{enumerate}
\end{prop}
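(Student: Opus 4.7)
The plan is to reduce part (1) to a local computation in $\Xt$ by exploiting the SNC structure of $\Dt$ at $\Courbest$ together with the real-analytic equation $\ft = |\gt|^k$ for $\Sk$ pulled back via $\rx$. Since $\tr^{-1}(0) = \Skt \cap \rx^{-1}(0)$ and, by Lemma \ref{lemma preim origin rx}, $\rx^{-1}(0) = \Do \cup (\Dfex \cap \Vgt)$, part (1) will follow from the two inclusions $\Courbes \subseteq \Skt$ and $(\rx^{-1}(0) \setminus \Courbes) \cap \Skt = \emptyset$, the complement in the second being contained in $\Do \setminus \Df$.

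For $\Courbes \subseteq \Skt$, I would split according to the two pieces of $\Courbes$. At a point $p \in \Df \cap \Do$, take SNC coordinates $(u,v,w)$ with the $\Df$-component $D = \{u=0\}$ and the $\Do$-component $D_1 = \{v=0\}$. Since every component of $\Df$ is sent by $\rx$ to a subvariety of $\Vf$ that is not reduced to $\{0\}$, and $\Vf \cap \Vg = \{0\}$ by companion Condition \ref{intersection non sauvage}, $\gt$ does not vanish identically on $D$; hence locally $\ft = u^m v^{m_1} I_1$ and $\gt = v^{n_1} I_2$ with $I_1, I_2$ units at $p$. The equation $u^m v^{m_1} I_1 = |v|^{kn_1} |I_2|^k$ can be solved along a sequence $v_j \to 0$ (say real positive) by extracting an $m$-th root for $u_j$; the hypothesis $kn_1 > m_1$ forces $u_j \to 0$, producing a sequence in $\rx^{-1}(\Sk \setminus \{0\})$ converging to $p$. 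The argument at $p \in \Dfex \cap \Vgt$ is analogous, using $\Dfex = \{u=0\}$, $\Vgt = \{v=0\}$, $\ft = u^m I_1$, $\gt = v I_2$.

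The reverse inclusion is a local exponent comparison. At $p \in \Do \setminus \Df$, pick a $\Do$-component $D_1 \ni p$ with local equation $v_1$; then $\ft = v_1^{m_1} h_1$ with $h_1$ holomorphic and non-vanishing on $D_1$, and similarly $\gt = v_1^{n_1} h_2$ with $h_2$ non-vanishing on $D_1$. The equation $\ft = |\gt|^k$ rewrites as $|v_1|^{kn_1 - m_1} = |h_1|/|h_2|^k$, whose left-hand side tends to zero as $v_1 \to 0$ (by $kn_1 > m_1$), while the right-hand side has a positive limit at $p$. This contradiction rules out $p \in \Skt$. Part (2) then follows formally: $\gts^{-1}(0) = \Skt \cap (\Vgt \cup \Do)$, and by part (1) $\Skt \cap \Do = \Courbes \cap \Do = \Df \cap \Do$. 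For $\Skt \cap \Vgt$, any $p$ in this intersection forces $\rx(p) \in \Sk \cap \Vg = \Vf \cap \Vg = \{0\}$, so $p \in \rx^{-1}(0)$ and hence $p \in \Courbes \cap \Vgt \subseteq \Df \cap \Vgt$; conversely $\Df \cap \Vgt \subseteq \Skt$, using part (1) for the $\Dfex \cap \Vgt$ piece and the inclusion $\Vft \cap \Vgt \subseteq \rx^{-1}(0) \subseteq \Courbes \subseteq \Skt$ for the other. Thus $\gts^{-1}(0) = (\Df \cap \Do) \cup (\Df \cap \Vgt) = \Courbest$.

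The main obstacle I anticipate is the local analysis at points of $\Do \setminus \Df$, where the SNC hypothesis on $\Dt$ is not guaranteed. One must verify that each individual component $D_1 \in \Do$ is smooth at the points under consideration, and that the exponent comparison is robust to the presence of auxiliary divisors (other $\Do$-components, or $\Vgt$) meeting $D_1$ possibly non-transversally. In each subcase, however, the key inequality $kn_1 - m_1 > 0$ is what produces the contradiction, so the argument ultimately reduces to a one-variable asymptotic estimate in the normal direction to $D_1$.
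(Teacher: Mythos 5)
Your part (1) is essentially the paper's argument: reduce via Lemma \ref{lemma preim origin rx} to the two local statements $\Courbes\subseteq\Skt$ and $(\Do\setminus\Df)\cap\Skt=\emptyset$, and settle both with local equations $\ft=I_f\,u^{\bullet}v^{\bullet}$, $\gt=I_g\,u^{\bullet}v^{\bullet}$ and the inequality $kn_1>m_1$. This matches the paper's proof in substance and in level of detail (the paper, too, only writes the computation at points lying on at most two components; your closing remark that the exclusion at non-SNC points of $\Do\setminus\Df$ reduces to the same exponent comparison is correct once one groups \emph{all} local components of $\ft^{-1}(0)$ and $\gt^{-1}(0)$ through the point, since every $\Do$-component contributes an exponent $kn_i-m_i>0$). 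One caveat already in part (1): you justify that $\gt$ does not vanish identically on a component of $\Df$ by invoking ``$\Vf\cap\Vg=\{0\}$ by Condition \ref{intersection non sauvage}''; that condition says $\Vg\cap\Sigma\Vf=\{0\}$, not $\Vf\cap\Vg=\{0\}$. The conclusion you want is nevertheless true, but it should be argued from Condition \ref{modif locus} of Definition \ref{modif adaptee} (a component of $\Dfex$ maps onto a curve contained in $\overline{\Vf\setminus\Vg}$, hence not inside $\Vg$) and from $\Vf\not\subset\Vg$ (forced by the transversality Condition \ref{intersection transverse}).

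In part (2) this same misreading becomes a genuine gap. By Condition \ref{intersection transverse}, $\Vf\cap\Vg$ is a curve through the origin, \emph{not} $\{0\}$; hence $\Sk\cap\Vg=\Vf\cap\Vg$ is one-dimensional, and your step ``$p\in\Skt\cap\Vgt\Rightarrow\rx(p)\in\Sk\cap\Vg=\{0\}\Rightarrow p\in\rx^{-1}(0)$'' fails. The auxiliary inclusions you use for the converse, $\Vft\cap\Vgt\subseteq\rx^{-1}(0)$ and $\rx^{-1}(0)\subseteq\Courbes$, are both false: $\rx^{-1}(0)=\Do\cup(\Dfex\cap\Vgt)$ contains the two-dimensional divisors of $\Do$, and $\Vft\cap\Vgt$ maps onto the curve $\Vf\cap\Vg$, so it is not contracted to the origin. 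Indeed, if $\Vf\cap\Vg$ were $\{0\}$ then $\Courbest$ would have no non-compact components and would coincide with $\Courbes$, erasing precisely the distinction the proposition is making between $\tr^{-1}(0)=\Courbes$ and $\gts^{-1}(0)=\Courbest$; as written, your argument would ``show'' $\Skt\cap\Vgt\subseteq\Courbes$ and thus miss the non-compact part $\Vft\cap\Vgt$ of $\gts^{-1}(0)$. What is actually needed (and what the paper proves, equation \eqref{eq transf str Skt}, ``in the same way'') is the identity $\Vgt\cap\Skt=\Vgt\cap\Vft$, and it requires its own local computation at points of $\Vgt$: at $p\in\Vgt$ with $\rx(p)\notin\Vf$ one has $\ft(p)\neq0$, $\gt\equiv0$ on $\Vgt$, so $p\notin\Skt$; at points over $\Vf\cap\Vg\setminus\{0\}$ the local model $\{I\,u=|v|^k\}$ (as in the proof of Lemma \ref{orientation Sk}) shows that the strict transform curve does lie in the closure $\Skt$; and along $\Vgt\cap\Do$ one argues as in part (1). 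Without this computation, part (2) is not established.
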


The statement of Proposition \ref{preimage de 0 dans skt} implies that we need new conditions on $k$ in order to proceed to the computation of the boundary of $\Sk$. In the sequel, more will appear. Let us make those conditions explicit.
\begin{conditionk} \emph{\textbf{on $k$.}} \label{k large enough}
The integer $k$ must be even and large enough in the sense of Lemma \ref{lemma discriminant}.
\end{conditionk}

\begin{conditionk} \emph{\textbf{on $k$.}} \label{cond k bons points.}
The integer $k$ must be such that, for every component $D_1\in \Do$, $kn_1>m_1$.
\end{conditionk}

\begin{nota}\label{nota strict part}
Let $H\subset \C^3_{u,v,w}$. Denote $\str[H]$ the \textbf{strict part} of $H$ defined as the closure $$\str[H]:=\overline{H\setminus \{uvw=0\}}.$$
\end{nota}

\begin{proof}[Proof of Proposition \ref{preimage de 0 dans skt}]

Lemma \ref{lemma preim origin rx} implies that for the first statement we have to prove \begin{equation}  \label{eq Do Df}\Skt\cap \Do = \Df \cap \Do,\end{equation} \begin{equation} \label{eq Dfex Vgt} \Dfex\cap \Vgt \subset \Skt \end{equation} and, for the second statement, \begin{equation}\label{eq transf str Skt}
\Vgt \cap \Skt = \Vgt \cap \Vft.
\end{equation}

Let us prove \eqref{eq Do Df}. Let $p$ be a generic point of a component $D_1\in \Do$. Then one can take local holomorphic coordinates $(u,v,w)$ on a neigbourhood $U_p$ of $p$ in $\Xt$ such that $D_1\cap U_p=\{u=0\}\cap U_p$. Then on $U_p$, $$ \left\{ \begin{array}{l}
\ft=I_f(u,v,w) u^{m_1}\\
\gt=I_g(u,v,w) u^{n_1}
\end{array} \right. $$ where $I_f,I_g$ are units at $p$.

Then $$\Skt\cap U_p=\overline{\{I_f\cdot u^{m_1}=|I_g|^k|u|^{kn_1}\}\setminus V(u)} \cap U_p.$$
Now the condition $kn_1>m_1$ implies that $\Skt$ does not contain $p$.

%The same considerations at generic points of $\Df$ or $\Vg$ will show, even without conditions on $k$, that $\Skt$ does not contain them neither.

Consider a point $p\in D_1 \cap D_2$, where $D_1 \in \Do, D_2 \in \Vgt$, and $p$ is on no other component of $\Dt$. Then, as $\Dt$ is a simple normal crossings divisor at $p$, we can set local holomorphic coordinates $(u,v,w)$ on a neighbourhood $U_p$ of $p$ in $\Xt$ such that $D_1=\{u=0\},D_2=\{v=0\}$. Then on $U_p$, $$ \left\{ \begin{array}{l}
\ft=I_f(u,v,w) u^{m_1}\\

\gt=I_g(u,v,w) u^{n_1}v^{n_2}
\end{array} \right. $$ where $I_f,I_g$ are units at $p$.

On $U_p$, $\Skt$ is then defined by $\overline{\{\ft=|\gt|^k\}\setminus V(u)}$, that is, $$\Skt\cap U_p=\str[\{I_f\cdot u^{m_1}=|I_g|^k|u|^{kn_1}|v|^{kn_2}\}]\cap U_p.$$

Then the condition $kn_1>m_1$ shows that $\Skt$ does not contain $p$.

Finally, consider a point $p\in D_1 \cap D_2$, where $D_1 \in \Do, D_2 \in \Df$, and $p$ is on no other component of $\Dt$. Then, as $\Dt$ is a simple normal crossings divisor at $p$, we can set local holomorphic coordinates $(u,v,w)$ on a neighbourhood $U_p$ of $p$ in $\Xt$ such that $D_1=\{u=0\},D_2=\{v=0\}$. Then on $U_p$, $$ \left\{ \begin{array}{l}
\ft=I_f(u,v,w) u^{m_1}v^{m_2}\\

\gt=I_g(u,v,w) u^{n_1}
\end{array} \right. $$ where $I_f,I_g$ are units at $p$.

On $U_p$, $\Skt$ is then defined by $\overline{\{\ft=|\gt|^k\}\setminus V(u)}$, that is, $$\Skt\cap U_p=\str[\{I_f\cdot u^{m_1}v^{m_2}=|I_g|^k|u|^{kn_1}\}]\cap U_p$$ and $\Skt$ contains $p$.

Equations \eqref{eq Dfex Vgt} and \eqref{eq transf str Skt} are proved in the same way, taking local equations for $\Skt$.
%The rest of this proposition, stating that $\Skt$ contains $\Courbest$, is proved in the following Subsections \ref{gen pt} and \ref{triple points}, where we provide local equations of $\Skt$ along $\Courbest$. 
\end{proof}
We want to keep track of the total transform of $\Vg \cap \Sk$ in $\Skt$ because the pullback of $g$ by $\Pi$ will be the adapted function that we use to compute the self-intersections of the irreducible components of $\Ex$ in $\Sb$, as explained in Lemma \ref{lemma compute self-intersections}.

We will now determine local equations of $\Skt$ around points of $\Courbest$, in order to define and study the morphisms $\norm$ and $\K$ mentioned in Section \ref{section tower}. 

Let $p\in \Courbest$. Because $\Dt$ is a normal crossings divisor at each point of $\Courbest$, $p$ will be on no more than three irreducible components of $\Dt$ at the same time. We face two different situations. In each of them we first build local coordinates giving $\Skt$ a simple equation, and use it to provide a ``local algebraic model'' of this surface. We then study the lifting of this model to the normalizations of the source and the target. This lifting, globally, provides the morphism $\K$.
\begin{discussion}
In the sequel, we show among other things that there is a finite open covering of $(\Skt,\Courbest)$ such that, on each one of these open sets, $\Skt$ can be presented as an affine real-algebraic variety. This allows us, using the definition of normalization of a real-algebrai variety (see \cite[p. 75]{BocCosRoy}), to speak about the normalization $\Sn$ of $\Skt$, which is a real analytic variety. Denote by $\norm \colon \Sn \rightarrow \Skt$ the \textbf{normalization morphism} of $\Skt$.
\end{discussion}

Taking normalizations, we are going to make great use of the following classical proposition (see again \cite{BocCosRoy}):
\begin{prop}\label{prop lifting normalization}
Let $V=Specm(A)$ and $W=Specm(B)$ be two affine $\mathbb{K}$-algebraic varieties, $\mathbb{K}=\R$ or $\C$, and $\varphi \colon V \dashrightarrow W$ be a birational map. Then there is a unique birational map $\varphi ^N\colon V^N \dashrightarrow W^N$ at the level of normalizations such that the following diagram commutes:
\begin{center}
\begin{tikzpicture}
\filldraw
(0,0) circle (0pt) node[] {$V^N$}
(2,0) circle (0pt) node[] {$W^N$}
(0,-1.5) circle (0pt) node[] {$V$}
(2,-1.5) circle (0pt) node[] {$W$}
(1,-0.75) node {$\circlearrowright$}
;
\path[-stealth] (0,-0.3) edge node [left]{$N_V$} (0,-1.2)
(2,-0.3) edge node[right]{$N_W$} (2,-1.2);
\path[dashed][-stealth] (0.3,-0.1) edge node[above]{$\varphi^N$} (1.6,-0.1)
(0.3,-1.5) edge node[below]{$\varphi$} (1.7,-1.5)
;
\end{tikzpicture}
\end{center}

Furthermore, let $\psi$ be an inverse of $\varphi$. If $X_1,\cdots,X_n$ and $Y_1,\cdots,Y_p$ are respectively generators of $A$ and $B$ such that for any $i$, $\varphi^*(Y_i)$ is integral over $A$ and for any j, $\psi^*(X_j)$ is integral over $B$, then $\varphi^N$ is an isomorphism of algebraic varieties.
\end{prop}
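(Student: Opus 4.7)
The plan is to translate the statement into commutative algebra via the correspondence $V = \mathrm{Specm}(A) \leftrightarrow A$ and $V^N = \mathrm{Specm}(A^N) \leftrightarrow A^N$, where $A^N$ denotes the integral closure of $A$ in its total ring of fractions $K(A)$ (and similarly for $B$). A central observation is that normalization does not enlarge the total ring of fractions: $K(A^N) = K(A)$ and $K(B^N) = K(B)$. The entire proposition will reduce to manipulations with these rings.

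For the first assertion, the birational map $\varphi$ corresponds to an isomorphism $\varphi^* \colon K(B) \to K(A)$ of total rings of fractions which, via the identification above, is equally an isomorphism $K(B^N) \to K(A^N)$. This defines a unique birational map $\varphi^N \colon V^N \dashrightarrow W^N$, and commutation of the square reduces to the tautology that $\varphi^*$, viewed as a map of fraction fields, restricts on $B \subseteq K(B^N)$ to the original $\varphi^* \colon B \to K(A)$, which in turn agrees with the composition $B \xrightarrow{\varphi^*} A \hookrightarrow A^N \hookrightarrow K(A^N)$. Uniqueness follows because any lift must induce the same isomorphism $K(B^N) \to K(A^N)$.

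For the second assertion, $\varphi^N$ is an isomorphism of varieties if and only if $\varphi^*(B^N) = A^N$ as subrings of $K(A)$. The hypothesis that every generator $\varphi^*(Y_i)$ is integral over $A$ gives $\varphi^*(Y_i) \in A^N$, hence $\varphi^*(B) \subseteq A^N$. For an arbitrary $b \in B^N$, integrality over $B$ provides a monic relation $b^n + c_{n-1} b^{n-1} + \cdots + c_0 = 0$ with $c_i \in B$; applying $\varphi^*$ yields such a relation for $\varphi^*(b)$ with coefficients in $\varphi^*(B) \subseteq A^N$, so $\varphi^*(b)$ is integral over $A^N$. Since $A^N$ is integrally closed in $K(A)$, we conclude $\varphi^*(b) \in A^N$, whence $\varphi^*(B^N) \subseteq A^N$. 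By the symmetric argument applied to the generators $X_j$ and the inverse $\psi$, we also get $\psi^*(A^N) \subseteq B^N$. As $\varphi^*$ and $\psi^*$ are already mutually inverse as isomorphisms on $K(A) = K(B)$, they restrict to mutually inverse ring maps $B^N \cong A^N$, which yields the sought isomorphism of algebraic varieties $\varphi^N$.

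The argument is essentially formal. The only point deserving care is handling the reducible (and in particular real-algebraic) situation, where total rings of fractions must be used in place of fraction fields; the integral-closure computation above is insensitive to this distinction and goes through verbatim.
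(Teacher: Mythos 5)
Your argument is correct. Note that the paper does not actually prove this proposition: it is quoted as a classical fact with a reference to Bochnak--Coste--Roy, so there is no in-text proof to compare against. What you wrote is the standard argument one would extract from that reference: identify birational maps with isomorphisms of total rings of fractions (using $K(A^N)=K(A)$, $K(B^N)=K(B)$), which gives existence, commutativity and uniqueness of $\varphi^N$ at once; then, for the second assertion, use that $\varphi^*$ sends the algebra generators $Y_i$ into $A^N$, hence $\varphi^*(B)\subseteq A^N$, propagate integrality to all of $B^N$, and conclude via the fact that $A^N$ is integrally closed in $K(A)$, with the symmetric containment for $\psi^*$ giving mutually inverse maps $B^N\cong A^N$. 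The only points worth making explicit, and which you handle at the same level of rigor as the paper itself, are that a birational map between possibly reducible varieties matches up irreducible components (so it really does induce an isomorphism of total quotient rings), and that in the real-algebraic setting of $Specm$ the dictionary between morphisms of varieties and $\mathbb{K}$-algebra homomorphisms of coordinate rings is the one fixed in Bochnak--Coste--Roy, which the paper already assumes when speaking of normalization of a real-algebraic variety.
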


From now on, if $x\in \R_+, n\in \N^*$, $x^{1/n}$ will denote the unique $n$-th root of $x$ in $\R_+$.

\subsection{Generic points of $\Courbest$}\label{gen pt}

Take a point $p\in D_1 \cap D_2$, where $D_1 \in \Do$ or $\Vgt$, $D_2 \in \Df$, with the notations of Definition \ref{modif adaptee}, and $p$ is on no other component of $\Dt$. Then, as $\Dt$ is a simple normal crossings divisor at $p$, we can set local holomorphic coordinates $(u',v',w')$ on a neighbourhood $U'_p$ of $p$ in $\Xt$ such that $D_1=\{u'=0\},D_2=\{v'=0\}$. Then locally: $$ \left\{ \begin{array}{l}
\ft=I_f(u',v',w') u'^{m_1}v'^{m_2}\\

\gt=I_g(u',v',w') u'^{n_1}
\end{array} \right. $$ where $I_f,I_g$ are units at $p$.

Now, choose $\phi,\psi$ units at $p$ such that $\phi^{n_1}=I_g$ and $\psi^{m_2}=I_f\cdot \phi^{-m_1}$, and build new local coordinates $(u,v,w)$ around $p$ defined by $$\left\{ \begin{array}{l}
u=\phi \cdot u'\\
v=\psi\cdot v'\\
w=w'

\end{array} \right. $$ on a neighbourhood $U_p\subset U'_p$ where $\phi$ and $\psi$ are non-zero.

On $U_p$, $\Skt$ is then defined by $\overline{\{\ft=|\gt|^k\}\setminus V(u)}$, that is, \begin{equation}\label{eq Skt gen}
\Skt\cap U_p=\str[\{u^{m_1}v^{m_2}=|u|^{kn_1}\}]\cap U_p.
\end{equation}

Note that Condition \ref{cond k bons points.} on $k$ ensures that $p$ is in $\Skt$.

Now, we can provide a birational morphism:
$$\kappa_p\colon \{(x,y,z)\in \C^3, x^{m_1}=y^{m_2}\}\rightarrow \str[\{(u,v,w)\in \C^3,u^{m_1}v^{m_2}=|u|^{kn_1}\}]$$

given by
\begin{equation}\label{morph gen}
\left\{\begin{array}{l}
u=x\\
v=y^{-1}|x|^{kn_1/m_2}=y^{-1}|y|^{kn_1/m_1} \\
w=z
\end{array} \right.~~~~~~~~~
\left\{\begin{array}{l}
x=u\\
y=v^{-1}|u|^{kn_1/m_2}\\
z=w
\end{array} \right.
\end{equation}

If $kn_1/m_2\in 2\N^*$, the map $\kappa_p$ is birational. 

We then require
\begin{conditionk} \emph{\textbf{on $k$.}} \label{fractions kn1 m2 sont entiers pairs}
For every $\kappa_p$ to be birational, the integer $k$ must be such that, $\forall ~ D_1\in \Do$ that intersects a component $D_2\in \Df$, $$\frac{kn_1}{m_2}\in 2\N.$$
\end{conditionk}
In addition to the birationality of $\kappa_p$, the identity $$y^{m_2}=u^{m_1}$$ shows that the coordinates $x,y,z, \overline{x},\overline{y}$ and $\overline{z}$ of the source are integral on the ring of regular functions of $\Skt\cap U_p$. Reciprocally, the identity $$v^{m_1}=y^{-m_1}|y|^{kn_1}$$ shows that, thanks to Condition \ref{cond k bons points.} on $k$, the coordinates of the target are integral on the ring of functions of the source. 

Note that these two identities taken together, with the Condition \ref{cond k bons points.} on $k$, ensure that $\kappa_p$ is a homeomorphism.

Setting $\Upk:=\kappa_p^{-1}(U_p)$ a neighbourhood of the origin in $\C^3$ and using Proposition \ref{prop lifting normalization}, we get:

\begin{lemma}\label{diffeo gen}
The homeomorphism $\kappa_p$ induces an isomorphism of real analytic varieties $$\overline{\kappa_p}\colon \left(\{(x,y,z)\in \C^3, x^{m_1}=y^{m_2}\}\cap \Upk\right)^N \xrightarrow{\sim} \left(\Skt \cap U_p\right)^N$$ at the level of normalizations.
\end{lemma}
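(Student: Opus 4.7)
The plan is to show that $\kappa_p$ satisfies the hypotheses of Proposition \ref{prop lifting normalization} and then invoke that proposition; the bulk of the work has already been done in the discussion preceding the statement.

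First, I would note that thanks to Condition \ref{fractions kn1 m2 sont entiers pairs} on $k$, the exponent $kn_1/m_2$ appearing in the formulas defining $\kappa_p$ is an even positive integer, hence the map $\kappa_p$ is well-defined and birational (with inverse given explicitly in \eqref{morph gen}), viewed as a morphism of real-algebraic varieties after restriction to suitably small neighbourhoods of the origin. Next, I would restrict $\kappa_p$ to $\Upk := \kappa_p^{-1}(U_p)$ so as to obtain a birational morphism of affine real-algebraic varieties, to which Proposition \ref{prop lifting normalization} applies.

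The key verification is the integrality condition. Taking as real-algebraic generators of the source ring the functions $x,\bar x,y,\bar y,z,\bar z$ and of the target ring the functions $u,\bar u,v,\bar v,w,\bar w$, I would argue as follows:
\begin{itemize}
\item The pullbacks of $u,w,\bar u,\bar w$ are $x,z,\bar x,\bar z$, so integrality here is trivial. For $v$, the identity $v^{m_1} = y^{-m_1}\vert y\vert^{kn_1}$ on the source rewrites (using that $kn_1$ is even and $kn_1 > m_1$ by Condition \ref{cond k bons points.}) as
\[
y^{m_1}\,\kappa_p^*(v)^{m_1} \;=\; (y\bar y)^{kn_1/2},
\]
and combined with the relation $y^{m_2}=x^{m_1}$ defining the source, this exhibits $\kappa_p^*(v)$ as integral over the source ring; the same applies to $\kappa_p^*(\bar v)$ by conjugation.
\item Conversely, the identity $y^{m_2}=u^{m_1}$ on the target shows that $(\kappa_p^{-1})^*(y)$, and similarly $(\kappa_p^{-1})^*(\bar y)$, are integral over the target ring, while $(\kappa_p^{-1})^*(x)=u$ and $(\kappa_p^{-1})^*(z)=w$ are again trivially integral.
\end{itemize}

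Having verified the hypotheses, Proposition \ref{prop lifting normalization} produces a unique birational map $\overline{\kappa_p}$ between the normalizations which is an isomorphism of real-algebraic varieties. The principal subtlety — and the step I expect to be the main obstacle in making this rigorous — is passing from an isomorphism of real-algebraic varieties to one of real-analytic varieties as claimed in the statement; here I would use that the normalization of a real-algebraic variety carries a canonical real-analytic structure compatible with its algebraic one, together with the observation (already recorded in the text) that the two integrality identities, combined with Condition \ref{cond k bons points.} on $k$, imply that $\kappa_p$ itself is a homeomorphism between the underlying sets, so that the algebraic isomorphism $\overline{\kappa_p}$ lifts the homeomorphism $\kappa_p$ and is therefore a genuine isomorphism of real analytic varieties.
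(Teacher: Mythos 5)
Your proposal is correct and follows essentially the same route as the paper: the paper likewise establishes birationality via Condition \ref{fractions kn1 m2 sont entiers pairs}, verifies integrality of the coordinate generators in both directions using precisely the identities $y^{m_2}=u^{m_1}$ and $v^{m_1}=y^{-m_1}|y|^{kn_1}$ (with Condition \ref{cond k bons points.}), and then invokes Proposition \ref{prop lifting normalization} on $\Upk=\kappa_p^{-1}(U_p)$. Your closing remarks on the homeomorphism property of $\kappa_p$ and on the real-algebraic presentation underlying the normalization correspond to observations the paper records just before stating the lemma and in the discussion introducing $\norm$, so nothing essential is missing.
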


\begin{discussion} {\bf Orientation of $\Skt$ and the model near a generic point.}\label{orient gen}

In section \ref{Sk}, we provided a description of the orientation of $\Sk$. This orientation is pulled-back by $\tr$ to provide an orientation of $\Skt$, making $\tr$ orientation-preserving.

Another way to retrieve this orientation on an open $U_p$ as in subsection \ref{gen pt} is to observe that it is the one that makes $\partial \Skt:=\tr^{-1}(\partial \Sk)\simeq \partial \Sk$ orientation-preserving diffeomorphic to $\partial F$, where the last equivalence symbol denotes an orientation-preserving diffeomorphism.

The modification $\rx$ induces a biholomorphic morphism over $F$, and is hence an orientation-preserving diffeomorphism from $\Ft:=\rx^{-1}(F)$ to $F$, both being oriented by their complex structures. $\Ft$ is a complex manifold, and $$\Ft\cap U_p=\{I(u,v,w)u^{m_1}v^{m_2}=\delta\}$$ for some holomorphic unit $I$ at $p$. The complex orientation of $\Ft\cap U_p$ is given by $du\wedge d\overline{u}\wedge dw\wedge d\overline{w}$ (or, equivalently, $dv\wedge d\overline{v}\wedge dw\wedge d\overline{w}$, but for our purposes the first expression is more natural).

Now, the orientation of $\Skt$ on $U_p$ is the one that is compatible with this orientation of $\Ft$, that is, the one that is given on the smooth part of $\Skt\cap U_p$ by $du\wedge d\overline{u}\wedge dw\wedge d\overline{w}$.

One can see now that the morphism $\kappa_p$ is orientation-preserving, \textbf{if the source is taken with its complex orientation} $dx\wedge d\overline{x}\wedge dz\wedge d\overline{z}$.

Furthermore, the normalizations of the source and target of $\kappa_p$ are now canonically oriented by the pullbacks of these orientations, making, again, $\overline{\kappa_p}$ orientation-preserving as soon as the source is oriented by its complex structure.

Finally, remind that in Definition \ref{orientations courbes initiales}, we oriented $\Courbes\cap U_p=\{u=v=0\}\cap U_p$ via its complex structure, i.e. by $dw\wedge d\overline{w}$. Pulling back this orientation to its preimage by $\kappa_p$ gives, again, the complex orientation of the axis $\{x=y=0\}\subset \Upk$. 

Therefore, the pullback of the orientation of $\Courbest \cap U_p$ by $\norm\circ \kappa_p$ \textbf{orients its preimage \linebreak $(\norm\circ \kappa_p)^{-1}(\Courbest\cap U_p)$ by its complex structure.}
\end{discussion}

\subsection{Double points of $\Courbest$}
\label{triple points}
Let $p \in D_1 \cap D_2 \cap D_3$, where $D_1 \in \Do, D_2 \in \Df, D_3 \in \Dt$, with the notations of Definition \ref{modif adaptee}. Set local holomorphic coordinates $(u',v',w')$ on a neighbourhood $U'_p$ of $p$ in $\Xt$ such that $D_1=\{u'=0\},D_2=\{v'=0\}, D_3=\{w'=0\}$. Then we can write locally: $$ \left\{ \begin{array}{l}
\ft=I_f(u',v',w') u'^{m_1}v'^{m_2}w'^{m_3}\\
\gt=I_g(u',v',w') u'^{n_1}w'^{n_3}
\end{array} \right. $$ where $I_f,I_g$ are units at $p$.

\noindent In the same way as in \ref{gen pt}, we obtain local coordinates $(u,v,w)$ on a neighbourhood $U_p\subset U'_p$ such that \begin{equation}\label{eq Skt double}
\Skt\cap U_p=\str[\{u^{m_1}v^{m_2}w^{m_3}=|u|^{kn_1}|w|^{kn_3}\}]\cap U_p.
\end{equation}

\noindent and $\Skt$ contains $p$ because, again, of Condition \ref{cond k bons points.} on $k$ for $D_1$.

We will provide, again, a local algebraic model for $\Skt$ around $p$, that will depend on the nature of the double point $p$.
 
\begin{defn} \label{def convention signes}(See \cite[section 6.2]{NemSzi12})\index{Double points of $\Courbest$ of type $\oplus$/$\ominus$}

The double point $p$ is said to be \textbf{of type $\plus$} if $D_3\in \Dg$ or $\Do$, and \textbf{of type $\minus$} if $D_3\in \Df$.

That is, $p$ is of type $\plus$ if and only if $n_3\neq 0$.
\end{defn}
%\begin{ex}
%For the sake of clarity, we repeat here in Figure \ref{fig: configuration surfacesbis} an example of divisor $\Dt$. As in Figure \ref{fig: configuration surfaces}, the components of $\Df$ are represented in blue, those of $\Dg$ in green, and those of $\Do$ in red. Full lines represent irreducible components of $\Courbest$, and the arrow represents a non-compact component of $\Df \cap \Dg$.
%\end{ex}

%\begin{center}
%
%\begin{figure}[h]
%\hspace{1cm}
%        \includegraphics[totalheight=7cm]{configuration.eps}
%    \caption{The divisor $\mathbb{D}$, and the curve configurations $\Courbes$, $\Courbest$.}
%    \label{fig: configuration surfacesbis}
%
%\end{figure}
%\end{center}
\begin{rk}
We separate the two types of points using signs, anticipating the fact that they will give different signs of edges in the final plumbing graph for $\dF$. Némethi and Szil\'ard use respectively $1$ and $2$, instead of $\plus$ and $\minus$, to designate the two types of double points.
\end{rk}

\subsubsection{Points of type $\plus$.}
If $p$ is of type $\plus$, things go more or less the same way as in the case of generic points. Namely, there is a birational map 
$$\kp_p\colon \{(x,y,z)\in \C^3, y^{m_2}=x^{m_1}z^{m_3}\}\rightarrow \str[\{u^{m_1}v^{m_2}w^{m_3}=|u|^{kn_1}|w|^{kn_3}\}]$$ \nomenclature{$\kp_p$}{}

\noindent given by

\begin{equation}\label{eq morph plus}
\left\{\begin{array}{l}
u=x\\
v=y^{-1}|x|^{kn_1/m_2}|z|^{kn_3/m_2} \\
w=z
\end{array} \right.~~~~~~~~~
\left\{\begin{array}{l}
x=u\\
y=v^{-1}|u|^{kn_1/m_2}|w|^{kn_3/m_2}\\
z=w
\end{array} \right.
\end{equation}
Condition \ref{fractions kn1 m2 sont entiers pairs} on $k$ shows that this map is birational. Furthermore it is a homeomorphism.

Moreover, as in \ref{gen pt}, using $y^{m_2}=u^{m_1}w^{m_3}$ and reciprocally, $v^{m_2}=|x|^{kn_1}x^{-m_1}|z|^{kn_3}z^{-m_3}$, still with $kn_i>m_i$, and denoting $\Upk:={\kp_p}^{-1}(U_p)$ a neighbourhood of the origin in $\C^3$, we have: \nomenclature{$\Upk$}{}

\begin{lemma}\label{diffeo plus}\nomenclature{$\overline{\kp_p}$}{}
The morphism $\kp_p$ is a homeomorphism, and it induces an isomorphism of real analytic varieties $$\overline{\kp_p}\colon \left(\{(x,y,z)\in \C^3, y^{m_2}=x^{m_1}z^{m_3}\}\cap \Upk\right)^N \xrightarrow{\sim} \left(\Skt \cap U_p\right)^N$$at the level of normalizations.
\end{lemma}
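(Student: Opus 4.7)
The proof applies Proposition \ref{prop lifting normalization} to $\kp_p$, in complete parallel with the generic-point argument of Lemma \ref{diffeo gen}. The plan is to verify birationality and the two integrality hypotheses, which yields the isomorphism $\overline{\kp_p}$ of normalizations; the homeomorphism assertion for $\kp_p$ itself will then follow from the explicit formulas \eqref{eq morph plus} together with those integrality relations.

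Birationality of $\kp_p$ between $V := \{y^{m_2} = x^{m_1}z^{m_3}\} \cap \Upk$ and $W := \str[\{u^{m_1}v^{m_2}w^{m_3} = |u|^{kn_1}|w|^{kn_3}\}] \cap U_p$ can be read off directly from \eqref{eq morph plus}: Condition \ref{fractions kn1 m2 sont entiers pairs} on $k$ makes $kn_1/m_2$ and $kn_3/m_2$ positive even integers, so that $|u|^{kn_1/m_2}$ and $|w|^{kn_3/m_2}$ become polynomial expressions $(u\bar u)^{kn_1/(2m_2)}$ and $(w\bar w)^{kn_3/(2m_2)}$ in the target real coordinate ring; the two sets of formulas of \eqref{eq morph plus} are then mutually inverse real-algebraic morphisms on the complement of $\{xyz = 0\}$ (respectively $\{uvw = 0\}$).

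For the integrality hypothesis, pulling back the target generators to $V$ via $\kp_p$ gives the monic relation $y^{m_2} = u^{m_1}w^{m_3}$ for $y$, and by conjugation for $\bar y$, over the target coordinate ring; the remaining generators $x, z, \bar x, \bar z$ pull back tautologically to $u, w, \bar u, \bar w$. In the opposite direction, combining the defining relation of $W$ with $y^{m_2} = x^{m_1}z^{m_3}$ yields $v^{m_2} = |x|^{kn_1}|z|^{kn_3}x^{-m_1}z^{-m_3}$, which, thanks to Condition \ref{cond k bons points.} applied to both $D_1$ and $D_3$ (in the subcase $D_3 \in \Vgt$ the $z$-factor drops out harmlessly since $m_3 = 0$), rearranges to a monic polynomial relation for $v$ over the source coordinate ring; the argument for $\bar v$ is identical. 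Proposition \ref{prop lifting normalization} then produces the desired isomorphism $\overline{\kp_p}$ between normalizations.

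Finally, the homeomorphism assertion for $\kp_p$ itself is obtained by extending the formulas of \eqref{eq morph plus} continuously across the exceptional divisor $\{xyz = 0\} \cap \Upk$: on its complement \eqref{eq morph plus} already defines a homeomorphism, and the integrality relations of the previous step force unique continuous extensions with $v \to 0$ (resp.\ $y \to 0$) as one approaches the divisor. The main point requiring extra care, compared with the generic case, is the simultaneous degeneration in the two ``singular'' coordinates $x$ and $z$; however, their symmetric role in \eqref{eq morph plus} reduces the continuity check to two copies of the generic argument of Lemma \ref{diffeo gen}, so no genuinely new obstacle arises and the proof is a direct three-variable analogue of the earlier one.
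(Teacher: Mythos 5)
Your proposal is correct and follows essentially the same route as the paper: birationality of $\kp_p$ from Condition \ref{fractions kn1 m2 sont entiers pairs}, the two integrality relations $y^{m_2}=u^{m_1}w^{m_3}$ and $v^{m_2}=|x|^{kn_1}x^{-m_1}|z|^{kn_3}z^{-m_3}$ guaranteed by Condition \ref{cond k bons points.}, and then Proposition \ref{prop lifting normalization}, exactly as in the generic case of Lemma \ref{diffeo gen}, with the homeomorphism read off from the explicit formulas \eqref{eq morph plus}. Only a small labelling slip: the relation $y^{m_2}=u^{m_1}w^{m_3}$ certifies integrality of the \emph{source} generator $y$ over the target ring (the pushforward direction), not the pullback of a target generator, but the two integrality checks you perform are precisely the ones needed.
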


\begin{discussion}{\bf Orientation of $\Skt$ and its model near a point $\plus$.}\label{orient plus} \index{Orientations!of $\Skt$ and its normalization near!a point of type $\oplus$} \index{Orientations!of the model near!a point of type $\oplus$}

An argument analogous to the one developped in discussion \ref{orient gen}, with, this time, $$\Ft\cap U_p=\{I(u,v,w)u^{m_1}v^{m_2}w^{m_3}=\delta\}$$ provides again an orientation of $\Skt$ given by $du\wedge d\overline{u}\wedge dw\wedge d\overline{w}$, and $\overline{\kp_p}$ is then orientation-preserving as soon as \textbf{its source is oriented by its complex structure.}

Furthermore, the two irreducible components of $\Courbest$ that are visible here, given by $\{u=v=0\}$ and $\{w=v=0\}$, are oriented as complex curves, respectively by $dw\wedge d\overline{w}$ and $du\wedge d\overline{u}$. The morphism $\kp_p$ then orients their preimages, respectively $\{x=y=0\}$ and $\{z=y=0\}$, by their complex orientations $dz\wedge d\overline{z}$ and $dx\wedge d\overline{x}$. Hence, again, the pullback of the orientation of $\Courbest \cap U_p$ by $\norm\circ \kp$ \textbf{orients its preimage $(\norm\circ \kp)^{-1}(\Courbest\cap U_p)$ by its complex structure.}
\end{discussion}

\subsubsection{Points of type $\minus$.}
If $p$ is a double point of type $\minus$, one can say less, but still provide a local algebraic model for $\Skt$. Let us introduce the morphism:
$$\km_p\colon \{(x,y,z)\in \C^3, x^{m_1}=y^{m_2}z^{m_3}\}\rightarrow \str[\{u^{m_1}v^{m_2}w^{m_3}=|u|^{kn_1}\}]$$

\noindent given by
\begin{equation}\label{eq morph minus}
\left\{\begin{array}{l}
u=x\\
v=y^{-1}|y|^{kn_1/m_1} \\
w=z^{-1}|z|^{kn_1/m_1}
\end{array} \right.
\end{equation}

Condition \ref{cond k bons points.} on $k$ implies that $\km_p$ is regular. Furthermore it is a homeomorphism. Indeed, identifying the arguments and moduli, one gets
$$\left\{\begin{array}{l}
x=u\\
y=v^{-1}|v|^{\faktor{kn_1}{(kn_1-m_1)}} \\
z=w^{-1}|w|^{\faktor{kn_1}{(kn_1-m_1)}}
\end{array} \right.$$

\begin{rk}
However, this morphism is \textbf{not} birational as soon as $\frac{kn_1}{kn_1-m_1}$ is not an even integer. 

For example if $kn_1>2m_1$, $\frac{kn_1}{kn_1-m1}=1+\frac{m_1}{kn_1-m_1}$ is not an integer.
\end{rk}

The morphism $\km_p$, sending no irreducible component of the source to the non-normal locus of the target (see Proposition \ref{prop lifting normalization}), lifts to a morphism $$\overline{\km_p}\colon \left(\{(x,y,z)\in \C^3, x^{m_1}=y^{m_2}z^{m_3}\}\cap U \right)^N \rightarrow \left(\Skt \cap U_p\right)^N$$ at the level of normalizations.

Although $\overline{\km_p}$ is not an isomorphism, we have the following:
\begin{lemma}\label{diffeo moins}
Setting $\Upk:={\km_p}^{-1}(U_p)$, the restriction of $\overline{\km_p}$ induces an isomorphism of real analytic varieties $$\overline{{\km_p}^*}\colon \left(\{(x,y,z)\in \C^3, x^{m_1}=y^{m_2}z^{m_3}\}\cap \Upk \setminus \{0\}\right)^N \rightarrow \left(\Skt \cap U_p\setminus \{0\}\right)^N$$ at the level of ``punctured'' normalizations.
\end{lemma}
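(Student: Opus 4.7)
The plan is to reduce the lemma to Lemma \ref{diffeo gen} by covering the punctured source with small open sets, each sitting over a ``generic'' point of $\Courbest$, and handling each such open set by the generic-point analysis of Subsection \ref{gen pt}.

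First I would observe that, because $\Dt$ has simple normal crossings at every point of $\Courbest$, the triple point $p$ is the unique point of $U_p$ lying on all three components $D_1$, $D_2$ and $D_3$ simultaneously. Consequently every point $q \in \Skt \cap U_p \setminus \{p\}$ lies on at most two components of $\Dt$. In a sufficiently small neighbourhood $V_q$ of such a $q$, disjoint from $p$, the surface $\Skt \cap V_q$ therefore falls into the setting of Subsection \ref{gen pt}, and Lemma \ref{diffeo gen} furnishes a complex-analytic model $\kappa_q$ which induces an isomorphism $\overline{\kappa_q}$ of normalizations.

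Next I would show that, restricted to $\km_p^{-1}(V_q)$, the map $\km_p$ factors through $\kappa_q$ as $\km_p = \kappa_q \circ \varphi_q$, where $\varphi_q$ is a real analytic isomorphism between the punctured source of $\km_p$ over $V_q$ and the source of $\kappa_q$. The crucial observation is that at a generic point $q \in (D_1 \cap D_2) \setminus D_3$ one has $w \neq 0$ throughout $V_q$, so in the formula \eqref{eq morph minus} the factor $z^{-1}|z|^{kn_1/m_1}$ corresponding to the $w$-coordinate is real analytic and invertible on $\km_p^{-1}(V_q)$; combined with the holomorphic change of coordinates relating the coordinates adapted to $p$ and those adapted to $q$, this yields $\varphi_q$. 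A symmetric argument handles $q \in (D_1 \cap D_3) \setminus D_2$, while at smooth points of $\Skt$ away from $\Courbest$ both $y$ and $z$ are invertible and $\km_p$ is itself a real analytic diffeomorphism near $q$.

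The main obstacle will be the bookkeeping in this second step: one must carefully identify the two adapted coordinate systems (around $p$ and around $q$), keep track of the units that appear in the local equations of $\Skt$, and verify that the resulting discrepancy $\varphi_q$ extends real analytically to the normalization across the axis through the preimage of $q$ in the source. Once this is done, patching the local isomorphisms $\overline{\kappa_q} \circ \overline{\varphi_q}$ over an open cover of $\Skt \cap U_p \setminus \{p\}$ produces the desired global real analytic isomorphism $\overline{{\km_p}^*}$ on the punctured normalizations.
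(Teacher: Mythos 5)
Your plan is correct, but it takes a genuinely different route from the paper. The paper covers the punctured germ by only two pieces, the complements of $V(v)$ and of $V(w)$ (each containing an entire punctured branch of $\Courbest$ at $p$), introduces on each piece an auxiliary birational model $\alpha_{p,v}$ (resp.\ its analogue for $w$), shows via the integrality criterion of Proposition \ref{prop lifting normalization} that it is an isomorphism on normalizations, and then proves that the discrepancy $\alpha_{p,v}^{-1}\circ \km_{p,v}$ becomes an isomorphism after passing to explicit normalization charts $\{y_1^{m_2}z_1^{d_3}=1\}$, $d_3=\gcd(m_1,m_3)$, of the model surface, by a direct computation in which only the nowhere-vanishing factors $|y_1|$, $|y_1'|$ occur. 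You instead localize at each point $q\neq p$ and reuse Lemma \ref{diffeo gen} as a black box; the price is the unit and branch bookkeeping you mention, the gain is that no bespoke model and no explicit normalization chart is needed. Your crucial step does go through: at $q\in (D_1\cap D_2)\setminus D_3$, choosing at $q$ the adapted coordinates $u_q=u$, $v_q=\psi v$ with $\psi^{m_2}=w^{m_3}$ (a local branch), $w_q=w-w(q)$, the discrepancy $\kappa_q^{-1}\circ\km_p$ is
$$(x,y,z)\longmapsto\left(x,\ y\,\zeta(z),\ z^{-1}|z|^{kn_1/m_1}-w(q)\right),\qquad \zeta(z)\ \text{a branch of}\ z^{m_3/m_2},$$
because the non-integer powers of $|y|$ and $|z|$ cancel; this is the restriction of an ambient real analytic isomorphism defined near the axis $\{x=y=0\}$, $z\neq 0$, carrying $\{x^{m_1}=y^{m_2}z^{m_3}\}$ onto $\{x_q^{m_1}=y_q^{m_2}\}$, so it lifts to the normalizations with no further work --- the extension ``across the axis'' you were worried about is automatic (the symmetric formula holds at points of $(D_1\cap D_3)\setminus D_2$, and at points with $u\neq 0$ your remark that $\km_p$ is an ambient real analytic isomorphism there is correct). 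Two points you should make explicit: the birationality of $\kappa_q$ at generic points of $D_1\cap D_3$ uses Condition \ref{fractions kn1 m2 sont entiers pairs} with $D_3$ in the role of $D_2$, which that condition does cover; and for the final patching, note that all your local isomorphisms are restrictions of the single lift $\overline{\km_p}$ furnished by Proposition \ref{prop lifting normalization}, so global injectivity follows from $\km_p$ being a homeomorphism --- this plays the role of the paper's closing remark that the images of the two pieces are disjoint.
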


\begin{proof}
Cover $\left(\Skt \cap U_p\setminus \{0\}\right)^N=\norm^{-1}\left(\Skt \cap U_p\setminus \{0\}\right)$ with two sets: $$\left(\Skt \cap U_p\setminus \{0\}\right)^N=\norm^{-1}\left(\Skt \cap U_p\setminus V(v)\right)\bigcup\limits_{\norm^{-1}\left(\Skt \cap U_p\setminus V(v\cdot w)\right)}\norm^{-1}\left(\Skt \cap U_p\setminus V(w)\right)$$

We are going to prove that, in restriction to each of these two sets, $\overline{{\km_p}^*}$ induces a diffeomorphism.

In order to do this, consider the birational map

$$\alpha_{p,v}\colon \{(x',y',z')\in \C^3, x'^{m_1}=y'^{m_2}z'^{m_3}\}\setminus V(y')\rightarrow \str[\{u^{m_1}v^{m_2}w^{m_3}=|u|^{kn_1}\}]\setminus V(v)$$

\noindent given by

$$\left\{\begin{array}{l}
u=x'\\
v=y'^{-1}\\
w=z'^{-1}|x'|^{kn_1/m_3}
\end{array} \right.~~~~~~~~~
\left\{\begin{array}{l}
x'=u\\
y'=v^{-1}\\
z'=w^{-1}|u|^{kn_1/m_3}
\end{array} \right.$$

\noindent Thanks to Condition \ref{fractions kn1 m2 sont entiers pairs} on $k$, this map is birational. Furthermore, with Condition \ref{cond k bons points.} on $k$ for $D_1$, the identities$${z'}^{m_3}=u^{m_1}v^{m_2}$$ and $$w^{m_3}={y'}^{m_2}|x'|^{kn_1}{x'}^{-m_1}$$

\noindent show that, setting $U'_v:=\alpha_{p,v}^{-1}(U_p)$, it induces an isomorphism of real analytic varieties 

$$\overline{\alpha_{p,v}}\colon \left(\{(x',y',z')\in \C^3, x'^{m_1}=y'^{m_2}z'^{m_3}\}\cap U'_{v} \setminus V(y')\right)^N\xrightarrow{\sim} \norm^{-1}\left(\Skt \cap U_p\setminus V(v)\right)$$

We can use this isomorphism to understand the restriction $\overline{\km_{p,v}}$ of $\overline{\km_p}$ to 
$$\left(\{(x,y,z)\in \C^3, x^{m_1}=y^{m_2}z^{m_3}\}\cap \Upk\setminus V(y)\right)^N.$$ 

We use the composition with $\alpha_{p,v}^{-1}$ to obtain a morphism:
$$\alpha_{p,v}^{-1}\circ \km_{p,v} \colon \{(x,y,z)\in \C^3, x^{m_1}=y^{m_2}z^{m_3}\}\cap \Upk \setminus V(y)$$ $$\rightarrow \{(x',y',z')\in \C^3, x'^{m_1}=y'^{m_2}z'^{m_3}\}\cap U'_v \setminus V(y')$$

\noindent given by 

$$\left\{\begin{array}{l}
x'=x\\
y'=y|y|^{-kn_1/m_1}\\
z'=z\cdot |z|^{-kn_1/m_1}|x|^{kn_1/m_3}=z|y|^{\faktor{kn_1m_2}{m_1m_3}}
\end{array} \right. .$$

The morphism $\alpha_{p,v}^{-1}\circ \km_{p,v}$ is not, in general, birational, but it induces an isomorphism of real analytic varieties at the level of normalizations. Indeed, consider the morphism

$$N_1 \colon \{(x_1,y_1,z_1)\in \C^3, y_1^{m_2}z_1^{d_3}=1\} \rightarrow \{(x,y,z)\in \C^3, x^{m_1}=y^{m_2}z^{m_3}\} \setminus V(y)$$

\noindent given by 

$$\left\{\begin{array}{l}
x=x_1^{m_3/d_3}z_1^{a_1}\\
y=y_1\\
z=x_1^{m_1/d_3}z_1^{c_1}
\end{array} \right.$$

\noindent where $d_3=gcd(m_1,m_3)$, and $a_1,c_1\in \N$ are such that $c_1m_3-a_1m_1=d_3$. 

The morphism $N_1$ is a normalization of $\{(x,y,z)\in \C^3, x^{m_1}=y^{m_2}z^{m_3}\} \setminus V(y)$. In the same way, build the normalization 
$$N_1' \colon \{(x_1',y_1',z_1')\in \C^3, y_1'^{m_2}z_1'^{d_3}=1\} \rightarrow \{(x',y',z')\in \C^3, x'^{m_1}=y'^{m_2}z'^{m_3}\} \setminus V(y').$$

Set $U_1:=N_1^{-1}(\Upk),U_1':=N_1'^{-1}(U'_v)$. The morphism $\alpha_{p,v}^{-1}\circ \km_{p,v}$ then lifts to the normalizations in the following way:
$$\overline{\alpha_{p,v}^{-1}\circ \km_{p,1}}\colon \{(x_1,y_1,z_1)\in \C^3, y_1^{m_2}z_1^{d_3}=1\}\cap U_1 \rightarrow \{(x_1',y_1',z_1')\in \C^3, y_1'^{m_2}z_1'^{d_3}=1\}\cap U_1'$$

$$ \left\{\begin{array}{l}
x_1'=x_1|y_1|^{-a_1 \frac{kn_1m_2}{m_1m_3}}\\
y_1'=y_1|y_1|^{-\frac{kn_1}{m_1}}\\
z_1'=z_1|y_1|^{\frac{kn_1m_2}{m_1d_3}}
\end{array} \right.~~~~~~~~~
\left\{\begin{array}{l}
x_1=x_1'|y_1'|^{a_1 \frac{kn_1m_2}{m_3(n_1-km_1)}}\\
y_1=y_1'|y_1'|^{\frac{kn_1}{(m_1-kn_1)}}\\
z_1=z_1'|y_1'|^{-\frac{kn_1m_2}{d_3(m_1-kn_1)}}
\end{array} \right.$$

\noindent which is, as claimed, an isomorphism of real analytic varieties, both functions $|y_1|$ and $|y_1'|$ being everywhere different from $0$.

Now $\overline{\alpha_{p,v}^{-1}\circ \km_{p,v}}$ and $\overline{\alpha_{p,v}^{-1}}$ are diffeomorphisms, whence $\overline{\km_{p,v}}$ also. 

The same strategy will prove that the restriction $\overline{\km_{p,w}}$ of $\overline{\km_p}$ to $$\left(\{(x,y,z)\in \C^3, x^{m_1}=y^{m_2}z^{m_3}\}\cap \Upk \setminus V(z)\right)^N$$ is also an isomorphism of real analytic varieties. 
%In this chart the required explicit normalization is provided by the morphism   
%$$N_2 \colon \{(x_2,y_2,z_2)\in \C^3, y_2^{d_2}z_2^{m_3}=1\} \rightarrow \{(x,y,z)\in \C^3, x^{m_1}=y^{m_2}z^{m_3}\} \setminus V(z)$$
%given by 
%$$\left\{\begin{array}{l}
%x=x_2^{\frac{m_2}{d_2}}y_2^{a_2}\\
%y=x_2^{\frac{m_1}{d_2}}y_2^{b_2}\\
%z=z_2
%\end{array} \right.$$
%
%\noindent with $d_2=gcd(m_1,m_2)$, and $a_2,b_2\in \N$ such that $b_2m_2-a_2m_1=d_2$, and 
%$$\alpha_{p,w}\colon \{(x',y',z')\in \C^3, x'^{m_1}=y'^{m_2}z'^{m_3}\}\setminus V(z')\rightarrow \str[\{u^{m_1}v^{m_2}w^{m_3}=|u|^{kn_1}\}]\setminus V(w)$$
%
%\noindent is given by
%
%$$\left\{\begin{array}{l}
%u=x'\\
%v=y'^{-1}|x'|^{\frac{kn_1}{m_2}}\\
%w=z'^{-1}
%\end{array} \right.~~~~~~~~~
%\left\{\begin{array}{l}
%x'=u\\
%y'=v^{-1}|u|^{\frac{kn_1}{m_2}}\\
%z'=w^{-1}
%\end{array} \right.$$

In conclusion, the images by $\overline{{\km_p}^*}$ of $\norm^{-1}\left(\Skt \cap U_p\cap V(v)\right)$ and of $\norm^{-1}\left(\Skt \cap U_p\cap V(w)\right)$ being disjoint, $\overline{{\km_p}^*}$ is an isomorphism of real analytic varieties.
\end{proof}

\begin{discussion}{\bf Orientation of $\Skt$ and its model near a point $\minus$.}\label{orient moins}

Here again, the orientation of $\Skt$ is given by $du\wedge d\overline{u}\wedge dw\wedge d\overline{w}$. But in order for $\overline{\km_p}$ to be orientation-preserving, we need to orient its source with \textbf{the opposite of the orientation given by its complex structure}, that is, using the form $-dx\wedge d\overline{x}\wedge dy\wedge d\overline{y}$.

Furthermore, the two irreducible components of $\Courbest$ that are visible here, given by $\{u=v=0\}$ and $\{u=w=0\}$, are oriented respectively as complex curves by $dw\wedge d\overline{w}$ and $dv\wedge d\overline{v}$. The morphism $\km_p$ then orients their preimages, respectively $\{x=y=0\}$ and $\{x=z=0\}$, by the opposite of their complex orientations, $-dz\wedge d\overline{z}$ and $-dy\wedge d\overline{y}$.

So, here, the pullback of the orientation of $\Courbest \cap U_p$ by $\norm\circ \km$ \textbf{orients its preimage \linebreak $(\norm\circ \km)^{-1}(\Courbest\cap U_p)$ by the opposite of its complex structure.}
\end{discussion}

\section{A variety $\St$}
In the previous section is implicitly provided a description of an analytic variety $\St$, as well as of a morphism $\K \colon (\St,\Courbesk) \rightarrow (\Sn,\Courbesn)$. In this section we make it more explicit. In particular we explain the orientation of $(\St,\Courbesk)$ and of the surfaces of $\Courbesk$.

\subsection{Definition of $(\St,\Courbesk)$}
Choose a covering of $\Courbes\subset \Skt$ with connected neighbourhoods of the form $U_p\cap \Skt$ providing equations for $\Skt$ as (\ref{eq Skt gen}) and (\ref{eq Skt double}) in subsections \ref{gen pt} and \ref{triple points}. Such a covering may be chosen finite because of the compactness of $\Courbes$. 

In the sequel, if $p\in \Courbes$, $\kappa_p$ denotes $\kappa_p$, $\km_p$ or $\kp_p$, according to the type of the point $p$.

\begin{defn} {\bf (Local complexification.)}\label{complexification}

If $U_p \cap \Skt$ is an open subset in this covering, and if $p$ is a generic point of $\Courbes$ at the intersection of $D_1 \in \Do$ and $D_2 \in \Df$,denote $$\Upt:= \left(\overline{\kappa_p}\circ \norm \right)^{-1}\left(U_p\cap \Skt\right)\subset \{(x_p,y_p,z_p)\in \C^3,x_p^{m_1}=y_p^{m_2}\}^N.$$ In other cases, the definition of $\Upt$ is the same, with the appropriate equation for the right-hand term, according to the type of $p$.

Then we define a real analytic variety $\St$, called \textbf{local complexification of $\Sn$}, by gluing together the open sets $\left(\Upt\right)_p$ in the following way: if $U_p ~\cap ~ U_q \neq \emptyset$, glue $\Upt$ and $\Uqt$ along $$\left(\overline{\kappa_p}\circ \norm \right)^{-1}\left(U_p\cap U_q\cap \Skt\right)\subset \Upt$$ and $$\left(\overline{\kappa_q}\circ \norm \right)^{-1}\left(U_p\cap U_q\cap \Skt\right)\subset \Uqt$$ using $\overline{\kappa_p}^{^{-1}}\circ \overline{\kappa_q}$, which is an isomorphism of real analytic varieties in restriction to $\Upt\cap \Uqt$.
\end{defn}

The latter is indeed an isomorphism: if $p$ is a point of $\Courbes$, and $q\neq p$ a double point of $\Courbest$, we may have $p\in U_q$, but never $q\in U_p$, because of the equations of $\Skt$ on these open sets, see section \ref{section norm loc models}. The isomorphism follows then from Lemmas \ref{diffeo gen}, \ref{diffeo plus} and \ref{diffeo moins}.

\begin{rk}
Note that the open sets $\Upt$ are not, in general, connected. See Section \ref{section configuration complexification} for more details.
\end{rk}

Now, the real analytic morphism $$\K : (\St,\Courbesk) \rightarrow (\Sn,\Courbesn)$$ is defined using the local morphisms $\kappa_p$, on the covering of $\St$ by the $\Upt$'s.

Lemmas \ref{diffeo gen}, \ref{diffeo plus} and \ref{diffeo moins} imply:
\begin{lemma}
The real analytic morphism $\K$ is an isomorphism of real analytic varieties outside $\Courbesn$.
\end{lemma}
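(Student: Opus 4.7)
The plan is to work locally on the covering $(\Upt)_p$ introduced in Definition \ref{complexification}, and then to verify that the local pieces glue consistently into a global isomorphism. By the very construction of $\K$ and of $\St$, the restriction of $\K$ to each patch $\Upt$ is the map $\overline{\kappa_p}$ (respectively $\overline{\kp_p}$ or $\overline{\km_p}$) between normalizations, depending on whether $p$ is a generic point of $\Courbes$, a type $\plus$ double point, or a type $\minus$ double point.

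First I would dispose of the two easy cases. When $p$ is a generic point of $\Courbes$ or a type $\plus$ double point, Lemmas \ref{diffeo gen} and \ref{diffeo plus} directly provide a real analytic isomorphism on the whole patch, hence \emph{a fortiori} on the complement of the preimage of $\Courbes$.

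The one genuinely delicate case will be that of a type $\minus$ double point. There, Lemma \ref{diffeo moins} only yields an isomorphism at the level of \emph{punctured} normalizations, after removing $p$ itself and its preimage. The key observation is that the excluded point $p$ already lies in $\Courbes$, since by construction it is a triple intersection of components of $\Dt$ belonging to the curve configuration; consequently $(\norm\circ \K)^{-1}(p)\subset \Courbesk$ and its image under $\K$ lies in $\Courbesn$. Hence, in restriction to $\Upt\setminus \Courbesk$, the morphism $\overline{\km_p}$ is a bona fide real analytic isomorphism onto its image in $\Sn\setminus \Courbesn$.

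Finally, the gluing of these local isomorphisms into a global one is automatic: on an overlap $\Upt\cap \Uqt$ the identification is, by Definition \ref{complexification}, exactly $\overline{\kappa_p}^{-1}\circ \overline{\kappa_q}$, so the local inverses manufactured above automatically agree where they need to. The main subtlety, as anticipated, is the bookkeeping for type $\minus$ points — ensuring that the single point one must remove from the domain of $\overline{\km_p}$ is already absorbed into $\Courbesn$ on the target and $\Courbesk$ on the source — but this is precisely what the exclusion of these two curve configurations in the statement of the lemma takes care of.
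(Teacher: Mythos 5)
Your proof is correct and follows essentially the same route as the paper, which simply invokes Lemmas \ref{diffeo gen}, \ref{diffeo plus} and \ref{diffeo moins} together with the gluing in Definition \ref{complexification}. Your extra bookkeeping for the type $\minus$ points — observing that the excluded point lies on $\Df\cap \Do\subset \Courbes$, so its preimages are absorbed into $\Courbesn$ and $\Courbesk$ — is exactly the implicit content of the paper's one-line argument, just made explicit.
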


\subsection{Orientation of $\St$}\label{orientation St} Although $\St$ is only real analytic, it admits local complex equations, and an orientation of $\St$ can be defined relatively to the local complex orientations, using Discussions \ref{orient gen}, \ref{orient plus} and \ref{orient moins}:
\begin{itemize}
\item On an open $\Upt$ coming from a generic point of $\Courbes$, $\St$ is oriented by its local complex structure, and the preimage of $\Courbes$ also.
\item On an open $\Upt$ coming from a double point $\plus$ of $\Courbest$, $\St$ is oriented by its local complex structure, and the components of the preimage of $\Courbest$ also.
\item On an open $\Upt$ coming from a double point $\minus$ of $\Courbest$, $\St$ is oriented by the opposite of its local complex structure, and the preimage of $\Courbest$ is also oriented by the opposite of its complex structure.
\end{itemize}  

These choices of orientation are compatible, because by construction they ensure that every morphism ${\overline{\kappa_p}}^{-1}\circ \overline{\kappa_q}$ of Definition \ref{complexification} is orientation-preserving. Again, by construction, the choices of orientation for the preimages of the irreducible components of $\Courbest$ by $\norm \circ K$ are compatible (see Discussions \ref{orient gen}, \ref{orient plus} and \ref{orient moins}). Furthermore, as can be checked using those same discussions, 
\begin{lemma}
The morphism $K$ is orientation-preserving.
\end{lemma}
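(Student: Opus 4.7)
The claim is essentially built into the construction: the orientation on $\St$ was defined precisely so that each local building block of $\K$ becomes orientation-preserving. The proof plan is to verify this chart by chart and invoke the locality of orientation-preservation.

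First I would recall that $\K$ is defined on the open cover $\{\Upt\}_{p \in \Courbes}$ of $\St$ by the local lifts $\overline{\kappa_p}$, $\overline{\kp_p}$ and $\overline{\km_p}$ of the morphisms $\kappa_p$, $\kp_p$, $\km_p$ introduced in Section \ref{section norm loc models}. Outside $\Courbesn$ each of these is an isomorphism of real analytic varieties (Lemmas \ref{diffeo gen}, \ref{diffeo plus}, \ref{diffeo moins}), and since orientation-preservation is a local property on a smooth dense open subset, it suffices to check on each $\Upt \setminus (\norm\circ \K)^{-1}(\Courbes)$ that the restriction of $\K$ is orientation-preserving.

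Next I would go case by case through the three types of points of $\Courbes$, in each case comparing the orientation chosen on $\Upt$ in Subsection \ref{orientation St} with the orientation of $\Skt$ computed in the corresponding Discussion: at a generic point of $\Courbes$, the orientation of $\Skt \cap U_p$ is given by $du\wedge d\overline{u}\wedge dw\wedge d\overline{w}$ (Discussion \ref{orient gen}), and $\overline{\kappa_p}$ pulls this back to $dx\wedge d\overline{x}\wedge dz\wedge d\overline{z}$, which is exactly the complex orientation assigned to $\Upt$; at a double point of type $\plus$, the same type of check applied to Discussion \ref{orient plus} shows that $\overline{\kp_p}$ is orientation-preserving with respect to the complex orientation on $\Upt$; at a double point of type $\minus$, Discussion \ref{orient moins} shows that $\overline{\km_p}$ intertwines the orientation of $\Skt$ with $-dx\wedge d\overline{x}\wedge dy\wedge d\overline{y}$, which is exactly the orientation we chose on $\Upt$ (the opposite of the complex one).

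Finally, I would note that the compatibility between these choices was already verified in Subsection \ref{orientation St}, where one observes that the transition maps $\overline{\kappa_p}^{-1}\circ \overline{\kappa_q}$ on overlaps $\Upt \cap \Uqt$ are orientation-preserving, so the local orientations patch to a well-defined global orientation on $\St$. Since on each chart $\K$ is orientation-preserving, and the charts are compatible, $\K$ is globally orientation-preserving. The only real \emph{a priori} obstacle would have been the sign mismatch at $\minus$ points, but this was anticipated in the definition of the orientation of $\St$ via the rule in Subsection \ref{orientation St}, which absorbs the sign exactly where needed.
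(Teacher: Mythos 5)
Your argument is correct and follows exactly the same route as the paper: the paper proves this lemma by referring back to Discussions \ref{orient gen}, \ref{orient plus} and \ref{orient moins} together with the chart-by-chart orientation choices of Subsection \ref{orientation St}, whose compatibility (via the transition maps $\overline{\kappa_p}^{-1}\circ\overline{\kappa_q}$) is what makes the local statements globalize. Your write-up just makes this verification explicit.
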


\section{The decorated graph $\Gcdtpart$}\label{section Graphe de courbes}
In order to make the description of $\Courbesk\subset \St$ easier, and in the perspective of the main algorithm, we introduce in this section the decorated graph $\Gcdtpart$.
\begin{defn}
Denote $\Gct$ the dual graph of the configuration of complex curves $\Courbest$. The partially decorated graph $\Gcdtpart$ is obtained from $\Gct$ in the following way:
\hfill
\begin{itemize}
\item If $C$ is an irreducible component of $D_1\cap D_2$, where $D_1\in \Df$ and $D_2\in \Do\cup \Dg$, decorate $v_C$, the vertex corresponding to $C$, with the triple $(m_1;m_2,n_2)$ (with the Notation \ref{multiplicities}), and with its genus $[g]$, in square brackets. If $D_2\in \Dg$, then the vertex associated to the non-compact curve $C$ is an arrowhead.
\item Decorate each edge $e_p$ of $\Gct$ corresponding to a double point $p$ of $\Courbest$ with a $\plus$ or a $\minus$ sign, according to the type of the point $p$, introduced in Definition \ref{def convention signes}. 
\end{itemize}
\end{defn}

\begin{rk}
As in \cite[7.2.1]{NemSzi12}, one can use the fact that the Milnor fiber of $\Phi$ is connected to prove that the graphs $\Gct$ and $\Gc$ are connected. To see that the Milnor fiber of $\Phi$ is connected, one can see it as the generic fiber of the smoothing $g_{|\{g=p\cdot f\}}$ of the reduced space $\Vf \cap \Vg$, for generic values of $p\in \C$, and use \cite[Theorem 8.2 (2)]{Gre17}. 

However, this will not necessarily imply that the plumbing graph obtained at the end of our computation will be connected. Moreover, through move $[R6]$ of the plumbing calculus exposed in \cite{Neu81}, a disconnected graph can encode the same variety as a connected one.
\end{rk}

\begin{rk}
\begin{enumerate}
\item If $p,q\in C_1\cap C_2$, then $p$ and $q$ are of the same type. Hence the different edges between two vertices are all of the same type.
\item Because of the hypothesis of simpleness in the point  \ref{sncd} of Definition \ref{modif adaptee}, the graph $\Gct$ contains no loop.
\end{enumerate}
\end{rk}

\begin{defn}
Let $\ve[C]$ be a vertex of $\Gcdtpart$. Its \textbf{star} is the subgraph of $\Gcdtpart$ whose vertices are $\ve[C]$ and its neighbours, with the edges connecting $\ve[C]$ to them. It will therefore have the following form:

\begin{center}
\begin{tikzpicture}

\filldraw 
(-0.7,0) circle (0pt) node[above] {$(m_1;m_s,n_s)$} node[below] {$[g_s]$}
(6.7,0) circle (0pt) node[above] {$(m_{s+t};m_2,n_2)$} node[below] {$[g_{s+t}]$}
(6,0) circle (2pt) node {}
(0,0) circle (2pt) node {}
(3,0.8) circle (2pt) node {}
(3,1) circle (0pt) node[above] {$(m_1;m_2,n_2)$} 
(3,0.8) circle node[below] {$[g]$}
(0,1.6) circle (2pt) node[above] {$(m_1;m_3,n_3)$} node[below] {$[g_3]$}
(6,1.6) circle (2pt) node[above] {$(m_{s+1};m_2,n_2)$} node[below] {$[g_{s+1}]$}
(0.5,0.9) circle (0pt) node {$\vdots$}
(5.5,0.9) circle (0pt) node {$\vdots$}
;
\draw (0,0) -- node[above]{$\plus$} node[below]{$\mu_s$} (3,0.8);
\draw (0,1.6) -- node[above]{$\plus$} node [below]{$\mu_3$}(3,0.8);
\draw (6,0) -- node[above]{$\minus$} node[below]{$\mu_{s+t}$} (3,0.8);
\draw (6,1.6) -- node[above]{$\minus$} node[below]{$\mu_{s+1}$}(3,0.8);

\end{tikzpicture}

\end{center}

\noindent where the decoration $\mu_i$ means that the edge is repeated $\mu_i$ times.
\end{defn}

\section{Description of $\Gkdt$}\label{section configuration complexification}

On a local chart of $\St$ of the form $\Upt$ (see Definition \ref{complexification}), we know how to recognize the configuration $\Courbesk$. However, we also need to know it as a global object. Namely, two questions remain:
\hfill
\begin{enumerate}
\item Given an irreducible curve $C\in \Courbes$, what is the structure of $\Ck$?
\item Given two irreducible curves $C_1,C_2\in \Courbes$ such that $C_1\cap C_2 \neq \emptyset$, what can we say about $\Cuk \cap \Cdk$?
\end{enumerate}

We know that in every open set $\Upt$, $\Courbesk$ is a configuration of complex curves.

In the sequel, $C$ is an irreducible component of $\Courbes$, contained in $D_1 \cap D_2$, where $D_1\in \Df$ and $D_2\in \Dg$ or $\Do$. Denote $p_3,\cdots,p_l$ the double points of $\Courbes$ on $C$, and $$d:=gcd(m_1,m_2).$$
\subsection{Cyclic orders on components}
In this subsection, we show how one can endow coherently the set of local irreducible components of $\Skt$ at a generic point of $C$ with a cyclic order. Then we show that the same is true for double points on $C$, and that the cyclic orders defined are compatible.

\begin{lemma}\label{ordre cyclique}{\bf (Cyclic order at generic points.)}

If $p$ is a generic point of the irreducible component $C$ of $D_1\cap D_2$, $\Ck\cap \Upt$ is a disjoint union of $d$ complex curves. Furthermore there is a cyclic order (in the sense of Definition \ref{def ordre cyclique}) on them. This order is compatible with the gluings of charts $\Uqt$ coming from generic points $q\in \Courbest$.

Namely, if $\St\cap \Upt=\left(\left\{x_p^{m_1}=y_p^{m_2}\right\}\cap \Upk\right)^N$, then the decomposition in irreducible components $$\left\{x_p^{m_1}=y_p^{m_2}\right\}=\bigcup\limits_{j=0}^{d-1} \left\{x_p^{m_1/d}=e^{\frac{2ij\pi}{d}} y_p^{m_2/d}\right\},$$ where $d$ denotes $gcd(m_1,m_2)$, provides $\St\cap \Upt$ as a disjoint union 
$$\St\cap \Upt=\bigsqcup\limits_{j=0}^{d-1} \left(\left\{x_p^{m_1/d}=e^{\frac{2ij\pi}{d}} y_p^{m_2/d}\right\}\right)^N\cap \Upt$$ 
and finally 
$$\Ck\cap \Upt=\bigsqcup\limits_{j=0}^{d-1} C^{(j)}\cap \Upt$$ where $C^{(j)}$ is the preimage of the curve $\{x_p=y_p=0\}$ by the normalization of the irreducible component $\{x_p^{m_1/d}=e^{\frac{2ij\pi}{d}} y_p^{m_2/d}\}$. Each $C^{(j)}$ is a smooth complex curve. The cyclic order on the connected components of $\Ck$ is then given by the rule: $$C^{(j+1)} \text{ follows } C^{(j)}\text{, where the addition is to be understood in }\Z/d\Z.$$ or equivalently $$\left\{x_p^{m_1/d}=e^{\frac{2i(j+1)\pi}{d}} y_p^{m_2/d}\right\}\cap \Upk \text{ follows } \left\{x_p^{m_1/d}=e^{\frac{2ij\pi}{d}} y_p^{m_2/d}\right\}\cap \Upk$$
\end{lemma}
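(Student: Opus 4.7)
The plan is to first analyze a single chart $\Upt$ in order to obtain the decomposition into $d$ irreducible pieces, and then check that the indexing by $j \in \Z/d\Z$ glues coherently between generic charts.

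First I would write $m_1 = d\,m_1'$ and $m_2 = d\,m_2'$ with $\gcd(m_1', m_2') = 1$, and factor
$$x_p^{m_1} - y_p^{m_2} \;=\; (x_p^{m_1'})^d - (y_p^{m_2'})^d \;=\; \prod_{j=0}^{d-1} \bigl(x_p^{m_1'} - e^{2ij\pi/d}\, y_p^{m_2'}\bigr),$$
giving the decomposition stated in the lemma. Each factor defines an irreducible hypersurface in $\C^3$, because a surface of the form $\{x^{a} = \alpha\, y^{b}\}$ with $\gcd(a,b)=1$ is irreducible (it admits a parametrization $(s,w) \mapsto (\beta s^{b}, s^{a}, w)$ after choosing a suitable root $\beta$ of $\alpha$). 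Taking normalizations, each factor becomes a smooth surface, and the normalization of the union is the disjoint union of these normalizations; this yields the decomposition $\St \cap \Upt = \bigsqcup_{j=0}^{d-1} (\{x_p^{m_1/d} = e^{2ij\pi/d} y_p^{m_2/d}\})^N \cap \Upt$.

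Next I would identify $\Ck$. In the chart, the curve $C$ corresponds to $\{u=v=0\}$ in the target of $\kappa_p$, whose preimage under the local model (using the formulas of Lemma \ref{diffeo gen}) is the axis $\{x_p = y_p = 0\}$. This axis lies on each irreducible factor; pulling it back via the explicit monomial parametrization of each normalization produces a single smooth curve $C^{(j)}$ per factor. Hence $\Ck \cap \Upt$ is the disjoint union of the $d$ smooth curves $C^{(j)}$, and the rule ``$C^{(j+1)}$ follows $C^{(j)}$'' defines a cyclic order on them in the sense of Definition \ref{def ordre cyclique}.

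The main obstacle, and the heart of the argument, is compatibility under chart changes. Suppose $q$ is another generic point of $C$ and $U_p \cap U_q \cap \Skt \neq \emptyset$. The local coordinates $(u_p, v_p, w_p)$ and $(u_q, v_q, w_q)$ on $\Xt$ are obtained from the same ambient data $(u', v', w')$ by the construction of Section \ref{gen pt}, which fixes $u$ and $v$ only up to multiplication by an $n_1$-th and an $m_2$-th root of unity respectively. Consequently, on a connected component of the overlap, the transition map $\overline{\kappa_p}^{-1} \circ \overline{\kappa_q}$ corresponds, at the level of the two local models, to a substitution of the form $x_q \mapsto \lambda\, x_p$, $y_q \mapsto \mu\, y_p$ with $\lambda^{m_1}, \mu^{m_2}$ equal to fixed units; the relation $x^{m_1} = y^{m_2}$ then forces $\lambda^{m_1} = \mu^{m_2}$, so that the induced permutation on the factors $\{x^{m_1/d} = e^{2ij\pi/d} y^{m_2/d}\}$ is of the form $j \mapsto j + c \pmod d$ for some constant $c$. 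A translation in $\Z/d\Z$ is an automorphism of the cyclic order, which is exactly what is required. The technical content of the proof is this bookkeeping of roots of unity; once it is carried out on each connected component of each overlap, the cyclic order glues globally.
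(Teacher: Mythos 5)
Your proposal is correct and follows essentially the same route as the paper: the paper's proof also reduces everything to the chart-transition computation, writing $x_q=\lambda x_p$, $y_q=(\mu^{-1}|\lambda|^{kn_1/m_2})\,y_p$ with $\lambda,\mu$ nowhere-zero functions descending from $\Skt$, and concluding that the induced identification of irreducible components is a shift $j\mapsto j+c$ in $\Z/d\Z$, hence compatible with the cyclic order. The only cosmetic difference is that you derive the shift from the relation $\lambda^{m_1}=\mu^{m_2}$ (note the diagonal form of the transition comes from $u_p,u_q$ and $v_p,v_q$ being reduced equations of $D_1$ and $D_2$, not from the root-of-unity ambiguity in the construction), whereas the paper writes out the multiplier $e^{2ij\pi/d}\mu^{-m_2/d}|\lambda|^{kn_1/d}\lambda^{-m_1/d}$ explicitly and observes it equals a fixed $d$-th root of unity.
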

\begin{proof}
We want to prove that, given two open subsets $\Upt$ and $\Uqt$, coming from generic points $p,q\in C$, such that $\Uqt\cap \Upt\neq \emptyset$, the orderings on the components of $\Ck$ on $\Upt\cap \Uqt$ are the same. The point is that this ordering on curves is the same as an ordering on irreducible components.

First, note that we can relate the coordinates $(x_p,y_p,z_p)$ and $(x_q,y_q,z_q)$ by descending to $\Skt$: take coordinates $(u_p,v_p,w_p)$ on $\Skt \cap U_p$ and $(u_q,v_q,w_q)$ on $\Skt \cap U_q$, as in section \ref{gen pt}. Then there are functions $\lambda,\mu,\nu$, nowhere zero on $U_p\cap U_q$ such that, on $U_p\cap U_q$,

$$\left\{ \begin{array}{l}
u_q=u_p \cdot \lambda(u_p,v_p,w_p)\\ 
v_q=v_p \cdot \mu(u_p,v_p,w_p)\\
w_q=\nu(u_p,v_p,w_p)
\end{array} \right.$$
which leads, using equations (\ref{morph gen}) of subsection \ref{gen pt}, to the following identities on $\Upk\cap \Uqk$:

$$\left\{ \begin{array}{l}
x_q=x_p \cdot \lambda\\ 
y_q=y_p \cdot\mu^{-1}|\lambda|^{kn_1/m_2}\\
z_q=\nu
\end{array} \right.$$
where $\lambda$ denotes $\lambda(x_p,y_p^{-1}|x_p|^{kn_1/m_2},z_p)$, and similarily for $\mu, \nu$. The functions $\lambda,\mu^{-1}|\lambda^{kn_1/m_2}|,\nu$ are nowhere zero on $\Upk\cap \Uqk$.

Whence the identification $$\left\{x_q^{m_1/d}=e^{\frac{2ij\pi}{d}} y_q^{m_2/d}\right\}\cap \Upk\cap \Uqk= \left\{x_p^{m_1/d}\lambda^{m_1/d}=e^{2ij\pi/d}y_p^{m_2/d}\mu^{-m_2/d}|\lambda|^{kn_1/d}\right\}\cap \Upk\cap \Uqk.$$

This implies that there exists $0\leqslant l \leqslant j-1$ such that $$e^{2ij\pi/d}\mu^{-m_2/d}|\lambda|^{kn_1/d}\lambda^{-m_1/d}=e^{2il\pi/d}.$$

This allows one to identify the $j$-th irreducible component, as seen in $\Uqk$, with the $l$-th component, in $\Upk$. And replacing $j$ by $j+1$ in the previous computation leads now to the component $l+1$ in $\Uqk$. 

In other words, we gave a meaning to the expression ``the following irreducible component'' at a point of $\{x_p^{m_1}=y_p^{m_2}\}\cap \Upk$, and hence to the expression ``the following curve'' in $\St\cap \Upt$.
\end{proof}

\begin{discussion}{\bf (Cyclic order at double points.)}\label{discussion ordre cyclique points doubles}
Recall the notations introduced at he beginnning of Section \ref{section configuration complexification}. Let $p_i$ be a double point of $\Courbest$ in $C$, $p_i\in D_i$. Then $\NK^{-1}(p_i)$ is made of $d_i:=gcd(m_1,m_2,m_i)$ points $\overline{p_i^1},\cdots,\overline{p_i^{d_i}}$, each one of them corresponding to an irreducible component of $\kappa_{p_i}^{-1}(\Skt\cap U_{p_i})$. 

Furthermore, one can define a cyclic order on these irreducible components, in the following way:

\textbf{If $p_i$ is of type $\plus$}, then \begin{equation}\kappa_{p_i}^{-1}(\Skt\cap U_{p_i})=\label{eq avant norm point plus}
\bigcup\limits_{j=0}^{d_i-1} \left\{y_i^{m_2/d_i}=e^{2ij\pi/d_i} x_i^{m_1/d_i}z_i^{m_3/d_i}\right\}\cap \Upik 
\end{equation}
where the coordinates $(x_i,y_i,z_i)$ are defined from local coordinates $(u_i,v_i,w_i)$ at $p_i$ \emph{via} Equation \ref{eq morph plus} applied at $p_i$. This union is the decomposition of $\kappa_{p_i}^{-1}(\Skt\cap U_{p_i})$ in irreducible components.

We define a cyclic order on the set of irreducible components by the following rule: $$\left\{y_i^{m_2/d_i}=e^{2ij\pi/d_i} x_i^{m_1/d_i}z_i^{m_3/d_i}\right\}$$ is followed by $$\left\{y_i^{m_2/d_i}=e^{2i(j-1)\pi/d_i} x_i^{m_1/d_i}z_i^{m_3/d_i}\right\}.$$

\textbf{If $p_i$ is of type $\minus$}, then \begin{equation}
\kappa_{p_i}^{-1}(\Skt\cap U_{p_i}) \label{eq avant norm point moins}=
\bigcup\limits_{j=0}^{d_i-1} \left\{x_i^{m_1/d_i}=e^{2ij\pi/d_i} y_i^{m_2/d_i}z_i^{m_3/d_i}\right\}\cap \Upik 
\end{equation}
where the coordinates $(x_i,y_i,z_i)$ are defined from local coordinates $(u_i,v_i,w_i)$ at $p_i$ \emph{via} Equation \ref{eq morph minus} applied at $p_i$.

Then, again, define a cyclic order by the rule: the component $$\left\{x_i^{m_1/d_i}=e^{2ij\pi/d_i} y_i^{m_2/d_i}z_i^{m_3/d_i}\right\}$$ is followed by the component $$\left\{x_i^{m_1/d_i}=e^{2i(j+1)\pi/d_i} y_i^{m_2/d_i}z_i^{m_3/d_i}\right\}.$$
\end{discussion}

The normalization of complex-analytic varieties having the topological effect of separating local irreducible components, this means that $\St\cap \Upit$ is made of a disjoint union of $d_i$ normal varieties, with a cyclic order induced by the one on the irreducible components of $\kappa_{p_i}^{-1}(\Skt\cap U_{p_i})$. Incidentally, this provides also a cyclic order on the set $\left\{\overline{p_i^1},\cdots,\overline{p_i^{d_i}}\right\}$ of preimages of $p_i$ by $\NK$. 

Moreover, the intersection of $\Ck$ with each of these connected components is an irreducible curve, namely the pullback of $\{x_i=y_i=0\}$ by the normalization of the irreducible component, with the notations of Equations (\ref{eq avant norm point plus}) or (\ref{eq avant norm point moins}). See Subsection \ref{appendix resol HJ} for more details. Recall again the setting of the beginning of Section \ref{section configuration complexification}. Then:

\begin{lemma}\label{regroupement pts doubles}{\bf (Compatibility of orders.)}

Let $p_i\in C \cap D_i$ be one of the double points of $\Courbest$ on $C$. Let $p$ be a generic point of $C$ such that $\widetilde{U}:=\Upt\cap \Upit\neq \emptyset$. Consider the cyclic orders defined above. If an irreducible component of $\kappa_p^{-1}(\Skt\cap \overline{U})$ is contained in an irreducible component of $\kappa_{p_i}^{-1}(\Skt\cap \Upit)$, then the following component of $\kappa_p^{-1}(\Skt\cap \overline{U})$ is contained in the following irreducible component of $\kappa_{p_i}^{-1}(\Skt\cap \Upit)$.

In particular, two curves in $\Ck\cap \overline{U}$ are in the same branch of $\Ck \cap \Upit$ if and only if their positions in the cyclic order differ by a multiple of $d_i$.

\end{lemma}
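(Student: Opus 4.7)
The approach mirrors the strategy of Lemma \ref{ordre cyclique}. I relate the adjusted local coordinates $(u_p,v_p,w_p)$ on $U_p\subset \Xt$ and $(u_i,v_i,w_i)$ on $U_{p_i}$ by descending to $\Xt$, lift the comparison through $\kappa_p$ and $\kappa_{p_i}$, and match irreducible components via their explicit equations. On the overlap $U_p\cap U_{p_i}$ the adjusted coordinates satisfy $u_p=u_i\lambda$, $v_p=v_i\mu$, $w_p=\nu$ for units $\lambda,\mu$ and a function $\nu$ with $\nu(p)\neq 0$ (since $p\notin D_i$); plugging these into (\ref{morph gen}) and (\ref{eq morph plus}) or (\ref{eq morph minus}) yields a transformation between $(x_p,y_p,z_p)$ and $(x_i,y_i,z_i)$ in which $z_i$ appears as a local unit.

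In the $\plus$ case, the $j$-th irreducible component at $p_i$ is cut out by $y_i^{m_2/d_i}=e^{2ij\pi/d_i}\,x_i^{m_1/d_i}\,z_i^{m_3/d_i}$. Since $z_i$ is a unit on the overlap and $d_i$ divides both $d=\gcd(m_1,m_2)$ and $m_3$, one can locally extract a branch of $z_i^{m_3/d}$ (there are $d/d_i$ such branches). Doing so factors the equation into $d/d_i$ irreducible pieces $y_i^{m_2/d}=e^{2i(j+kd_i)\pi/d}\,z_i^{m_3/d}\,x_i^{m_1/d}$ for $k=0,\dots,d/d_i-1$. Comparing with the generic-point decomposition $x_p^{m_1/d}=e^{2il\pi/d}\,y_p^{m_2/d}$ of Lemma \ref{ordre cyclique}, and absorbing all unit factors from $\lambda,\mu,\nu$ and from the branch choice into an overall constant phase (independent of $k$), I obtain the correspondence $l\equiv -(j+kd_i)\pmod d$. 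Fixing $j$ and varying $k$ recovers exactly those $l$'s congruent modulo $d_i$, proving the ``in particular'' claim; keeping $k$ fixed while advancing $l$ to $l+1$ forces $j$ to decrease by $1$, matching the cyclic-order rule at a $\plus$-point stated in Discussion \ref{discussion ordre cyclique points doubles}. Passing to the normalizations preserves all identifications, since normalization merely separates the already-distinguished components.

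The $\minus$ case is strictly parallel: the equation $x_i^{m_1/d_i}=e^{2ij\pi/d_i}\,y_i^{m_2/d_i}\,z_i^{m_3/d_i}$ decomposes into branches $x_i^{m_1/d}=e^{2i(j+kd_i)\pi/d}\,z_i^{m_3/d}\,y_i^{m_2/d}$, giving $l\equiv j+kd_i\pmod d$, so that $l\mapsto l+1$ corresponds to $j\mapsto j+1$, which matches the $\minus$-point convention. The main obstacle of the argument is the bookkeeping with roots of unity and branch choices: one must verify that the unit factors from $\lambda,\mu,\nu$ and from the chosen branch of $z_i^{m_3/d}$ contribute only an overall phase independent of $k$ — so that they translate the cyclic labeling rather than permute it — and confirm that the opposing cyclic-order conventions at $\plus$ versus $\minus$ double points are automatically reproduced by the different placement of the $z$-variable in the defining equations of $\kp_p$ and $\km_p$.
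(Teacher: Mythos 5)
Your proposal is correct and follows essentially the same route as the paper: both proofs relate the adjusted coordinates at $p$ and $p_i$ through nowhere-vanishing functions $\lambda,\mu,\nu$, push this through the explicit local models, and track the roots-of-unity phases to see that the generic-point index and the double-point index shift in opposite (resp. equal) directions at a $\plus$ (resp. $\minus$) point, which the two cyclic-order conventions are designed to compensate. The only cosmetic difference is that you factor the double-point component over the overlap (where $z_i$ is a unit) into $d/d_i$ pieces via a branch of a fractional power of $z_i$, whereas the paper raises the generic-component equation to the power $d/d_i$ and identifies the resulting unit with $e^{2il\pi/d_i}z_i^{m_3/d_i}$ — the same computation read in the two directions.
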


\begin{proof}[Proof of Lemma \ref{regroupement pts doubles}.]
Denote $(u,v,w)$ and $(u_i,v_i,w_i)$ coordinates on $U_p$ and $U_{p_i}$ giving respectively equations of the form (\ref{eq Skt gen}) and (\ref{eq Skt double}) for $\Skt$, as in Section \ref{section norm loc models}. 

Then $$\St\cap  \Upt=\left(\left\{x^{m_1}=y^{m_2}\right\}\cap \Upk\right)^N=\bigsqcup\limits_{j=0}^{d-1} \left(\left\{x^{m_1/d}=e^{2ij\pi/d}y^{m_2/d} \right\}\cap \Upk\right)^N$$
where $$\left\{\begin{array}{l}
x=u\\
y=v^{-1}|u|^{kn_1/m_2}\\
z=w
\end{array} \right.$$

Then there exist functions $\lambda,\mu,\nu$, nowhere zero on $U_p\cap U_{p_i}$, such that, on $U_p\cap U_{p_i}$,
$$\left\{ \begin{array}{l}
u=u_i . \lambda(u_i,v_i,w_i)\\ 
v=v_i .\mu(u_i,v_i,w_i)\\
w=\nu(u_i,v_i,w_i)
\end{array} \right.$$

We face two different situations, according to the type of the double point $p_i$. Denote $$\delta_i:=\dfrac{d}{d_i}.$$

\underline{If $p_i$ is of type $\plus$}, then $$\St\cap  \Upit=\left(\left\{y_i^{m_2}=x_i^{m_1}z_i^{m_3}\right\}\cap \Upik\right)^N=$$ \begin{equation}\label{eq multigerme point plus}
\bigsqcup\limits_{j=0}^{d_i-1} \left(\left\{y_i^{m_2/d_i}=e^{2ij\pi/d_i} x_i^{m_1/d_i}z_i^{m_3/d_i}\right\}\cap \Upik\right)^N
\end{equation}
where 
$$\left\{\begin{array}{l}
u_i=x_i\\
v_i=y_i^{-1}|x_i|^{kn_1/m_2}|z_i|^{kn_3/m_2} \\
w_i=z_i
\end{array} \right.$$
which gives, on $\Upik\cap \Upk$,
$$\left\{\begin{array}{l}
x=x_i\lambda\\
y=y_i|z_i|^{-kn_3/m_2}\mu^{-1}|\lambda|^{kn1/m_2}\\
z=\nu
\end{array} \right.$$
where $\lambda$ (or $\mu,\nu$) denotes $\lambda(x_i,y_i^{-1}|x_i|^{kn_1/m_2}|z_i|^{kn_3/m_2},z_i)$. Hence 
$$\left\{x^{m_1/d}=e^{2ij\pi/d} y^{m_2/d}\right\}\cap \Upk\cap \Upik=$$ $$\left\{x_i^{m_1/d}=y_i^{m_2/d}\lambda^{-m_1/d}|\lambda|^{kn_1/d}|z_i|^{-kn_3/d}\mu^{-m_2/d}e^{2ij\pi/d}\right\}\cap \Upk\cap \Upik.$$

Elevating to the power $\delta_i$ shows that $$\left\{x^{m_1/d}=e^{2ij\pi/d} y^{m_2/d}\right\}\cap \Upk\cap \Upik\subset$$ $$\left\{y_i^{m_2/d_i}=x_i^{m_1/d_i}\lambda^{m_1/d_i}|\lambda|^{-kn_1/d_i}|z_i|^{kn_3/d_i}\mu^{m_2/d_i}e^{-2ij\pi/d_i}\right\}\cap \Upk\cap \Upik.$$

Now there exists $0\leqslant l\leqslant d_i-1$ such that on $\Upik\cap \Upk$, $$\lambda^{m_1/d_i}|\lambda|^{-kn_1/d_i}|z_i|^{kn_3/d_i}\mu^{m_2/d_i}e^{-2ij\pi/d_i}=e^{2il\pi/d_i} z_i^{m_3/d_i}.$$

The previous inclusion can then be written as 
$$\left\{x^{m_1/d}=e^{2ij\pi/d} y^{m_2/d}\right\}\cap \Upk\cap \Upik\subset \left\{y_i^{m_2/d_i}=e^{2il\pi/d_i}x_i^{m_1/d_i} z_i^{m_3/d_i}\right\}\cap \Upk\cap \Upik.$$

Taking the successor of this component, we get indeed $$\left\{x^{m_1/d}=e^{2i(j+1)\pi/d} y^{m_2/d}\right\}\cap \Upk\cap \Upik \subset \left\{y_i^{m_2/d_i}=e^{2i(l-1)\pi/d_i}x_i^{m_1/d_i} z_i^{m_3/d_i}\right\}\cap \Upk\cap \Upik.$$

\underline{If $p_i$ is of type $\minus$,}  then $$\St\cap  \Upit=\left(\left\{x_i^{m_1}=y_i^{m_2}z_i^{m_3}\right\}\cap \Upik\right)^N=$$ \begin{equation}\label{eq multigerme point moins}
\bigsqcup\limits_{j=0}^{d_i-1} \left(\left\{x_i^{m_1/d_i}=e^{2ij\pi/d_i} y_i^{m_2/d_i}z_i^{m_3/d_i}\right\}\cap \Upik\right)^N
\end{equation}

and the proof follows the same steps.

\end{proof}
\begin{rk}
In other words, taking into account a double point $p_i$ of $C$  shows that some of the \emph{a priori} different components of $\Ck$ are in the same branch, and that this identification is made uniformly with respect to the cyclic order. 
\end{rk}
However, other identifications may occur, due to the fundamental group of $C$. We make this aspect more precise in Subsection \ref{subsection non rational curves}.

The following lemma describes a favorable situation in which the data contained in $\Gcdtpart$ is sufficient to determine the structure of $\Ck$. To state it we need the following:

\begin{defn}
Let $(L,0)$ be a branch of $\Sigma \Vf$. 
The \textbf{transversal type} of $(X,\Vf)$ along $L$ is the equisingularity type of a transverse section of the couple $(X,\Vf)$ along $L$, where two curves in surfaces $(V_1,C_1)$ and $(V_2,C_2)$ are called equisingular if they admit homeomorphic good embedded resolutions.
\end{defn}
\begin{rk}
This definition makes sense because there exists a representative $X$ of $(X,0)$ such that for any branch $L$ of $\Sigma \Vf$, all transverse sections along $L$ are equisingular.
\end{rk}
\begin{lemma}\label{Rational curve}
Let $C$ be an ireducible curve of $\Courbes$, and denote $D$ the component of $\Df$ containing $C$. If $D$ is not contained in the strict transform $\Vft$ of $\Vf$, denote by $L$ the curve $\rx(D)$. If the transversal type of $(X,\Vf)$ along $L$ has a resolution whose exceptional divisor is a union of rational curves, then the curve $C$ is rational.
\end{lemma}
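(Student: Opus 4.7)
The plan is to regard the restriction $\rx|_D \colon D \to L$ as a proper surjective morphism from the smooth complex surface $D$ to the irreducible curve $L$, to show its generic fibre is $\mathbb{P}^1$, and then to localize $C$ inside a single fibre.

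First I would examine $\rx|_D$ above a generic point $q\in L$. A transverse slice $T\subset X$ to $L$ at $q$ is a germ of surface realizing the transversal type of $(X,\Vf)$ along $L$, and the strict transform $\widetilde{T}\subset \Xt$ under $\rx$ is a resolution of this transverse singularity whose exceptional divisor is by hypothesis a union of rational curves. Combining the simple normal crossings condition of Definition \ref{modif adaptee} with the equisingularity of $(X,\Vf)$ along $L$ away from finitely many bad points, one obtains that above a Zariski-open part $L^\circ \subset L$ the modification $\rx$ has a product structure: $D\cap \widetilde{T}$ is a single exceptional curve $E_D\simeq \mathbb{P}^1$ of the transverse resolution, and $\rx|_D^{-1}(L^\circ)\to L^\circ$ is a locally trivial $\mathbb{P}^1$-fibration with fibre $E_D$. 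Thus the generic fibre of $\rx|_D$ is rational.

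Globally, $\rx|_D$ is then a proper surjective map from a smooth surface to a curve whose generic fibre is a rational curve. Composing with the normalization $\overline{L}\to L$ and, if needed, applying the Stein factorization, one gets a surjection from $D$ onto a smooth curve with connected fibres and generic fibre $\mathbb{P}^1$. By the classical structure theorem for such fibrations (ruled surfaces and their degenerations), every fibre of this morphism is a connected tree of smooth rational curves.

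Finally, $C$ is an irreducible component of $D\cap D_2$ with either $D_2\in \Do$ or $D_2=\Vgt$. If $D_2 \in \Do$, then $\rx(D_2)=\{0\}$, hence $\rx(C)=\{0\}$. If $D_2 = \Vgt$, then $\rx(C)\subset L \cap \Vg$, which is a finite set, so $\rx(C)$ is a point (using that $g$ is a companion of $f$, cf.\ Definition \ref{companion}). In both cases $C$ lies in a single fibre of $\rx|_D$, which is a union of smooth rational curves, forcing $C$ itself to be rational. The main obstacle will be the rigorous proof of the local product structure above $L^\circ$, i.e.\ the identification of the generic fibre of $\rx|_D$ with the exceptional curve $E_D$ of a transverse resolution; this demands an equisingularity statement along $L$ for the modification $\rx$, and it relies crucially on the SNCD hypothesis of Definition \ref{modif adaptee} together with the equisingular stratification of $(X,\Vf)$ along $L$.
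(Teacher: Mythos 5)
The paper gives no written-out argument here: it simply defers to \cite{NemSzi12}, Proposition 7.4.8 a), and the strategy of that proof is the one you propose (fibre $D$ over $L=\rx(D)$, identify the generic fibre with exceptional curves of a resolution of the transversal type, then use the structure theory of genus-zero fibrations to control the special fibre containing $C$). So your route is the intended one, and in outline it is correct; the final localization of $C$ in a fibre is also fine, although the finiteness of $L\cap \Vg$ is really a consequence of Condition 1 of Definition \ref{modif adaptee} (one has $L\setminus\{0\}\subset \Vf\setminus\Vg$, since $L$ lies in the non-isomorphism locus of $\rx$), or simply of the compactness and connectedness of $C$, rather than of the companion conditions.

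Three points in the middle of your argument need repair or justification. First, the assertion that over $L^\circ$ the slice $D\cap\widetilde{T}$ is a \emph{single} curve $E_D\simeq\mathbb{P}^1$ and that $\rx|_D$ restricts to a locally trivial $\mathbb{P}^1$-fibration is too strong: monodromy along $L$ may permute several exceptional curves of the transverse resolution which then lie on the same irreducible component $D$, so the generic fibre of $\rx|_D$ is in general only a finite disjoint union of rational curves. This is harmless, because after the Stein factorization you invoke the generic fibre becomes connected and, by generic smoothness, irreducible, hence equal to one of these rational curves; but the argument should be phrased that way rather than via a product structure with irreducible fibre. Second, the hypothesis of the Lemma provides rational exceptional curves for \emph{some} resolution of the transversal type, whereas what appears in your argument is the modification induced by $\rx$ over a generic transverse slice; you must add the classical remark that any two good resolutions of the transverse germ differ by blowups of points, so that the rationality of all exceptional curves is independent of the chosen resolution, and you must prove (Bertini/Sard-type genericity, plus the equisingularity of $(X,\Vf)$ along $L$ at generic points) that for generic $q\in L$ the preimage of the slice is smooth and is indeed such a resolution --- this is exactly the step you flag as the main obstacle, and it is the real content hidden behind the citation. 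Third, Definition \ref{modif adaptee} imposes normal crossings (hence smoothness of $\Xt$ and of $D$) only along $\Courbest$; in the general setting of the paper you should therefore first replace $D$ by a resolution or normalization before applying the structure theorem for genus-zero fibrations, and then conclude because $C$ is dominated by a rational component of a fibre upstairs, so its geometric genus is zero.
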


The proof of this lemma follows the one of \cite[Proposition 7.4.8, a)]{NemSzi12}. In the case studied by Némethi and Szil\' ard, it has the following consequence:
\begin{cor}
Let $(X,0)=(\C^3,0)$, and $C\in \Courbes$ be decorated by a triple of multiplicities $(m_1;m_2,n_2)$. Then if $m_1\geqslant 2$, then the curve $C$ is rational.
\end{cor}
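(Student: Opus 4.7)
The plan is to deduce this as a direct consequence of Lemma \ref{Rational curve}, so I only need to verify its hypothesis in the setting $(X,0)=(\C^3,0)$ with $m_1\geq 2$.

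First, I would check that the assumption $m_1\geq 2$ places us in the situation covered by Lemma \ref{Rational curve}. Recall that $\Df=\Dfex\cup\Vft$, and $m_1$ is the multiplicity of $\ft=f\circ\rx$ along the component $D$ of $\Df$ containing $C$. Since $f$ is reduced, the multiplicity of $\ft$ along the strict transform $\Vft$ equals $1$; therefore $m_1\geq 2$ forces $D\in\Dfex$, and in particular $D$ is not contained in $\Vft$. Consequently, the curve $L:=\rx(D)\subset\Vf$ is well defined, and we may speak of the transversal type of $(X,\Vf)$ along $L$.

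Next, I would use the smoothness of $(X,0)=(\C^3,0)$ to describe this transversal type. A transverse section at a generic point of $L$ produces a germ of pair $(V_2,C_2)$ in which $V_2\cong(\C^2,0)$ is a smooth ambient surface and $C_2$ is a germ of plane curve; equisingularity along $L$ does not depend on the choice of section. Since every plane curve singularity in a smooth surface admits a good embedded resolution obtained by a finite sequence of point blowups, the exceptional divisor of such a resolution is a normal crossings configuration of smooth rational curves (each blowup introduces a $\mathbb{P}^1$, and strict transforms of such components remain rational). Hence the transversal type of $(X,\Vf)$ along $L$ has a resolution whose exceptional divisor is a union of rational curves.

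The hypothesis of Lemma \ref{Rational curve} is therefore satisfied, and that lemma yields the rationality of $C$. The only mildly delicate point is the justification that $m_1\geq 2$ forces $D\in\Dfex$ (as opposed to $D\subset\Vft$); everything else reduces to the classical fact that embedded resolutions of plane curve singularities produce only rational exceptional components, so no genuine obstacle arises.
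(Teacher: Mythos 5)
Your proof is correct and follows essentially the same route as the paper: it applies Lemma \ref{Rational curve} after observing that, for $(X,0)=(\C^3,0)$, every transversal type of $\Vf$ is a germ of plane curve in a smooth surface, whose good embedded resolutions have only rational exceptional curves. Your explicit check that $m_1\geqslant 2$ together with the reducedness of $f$ forces $D\in\Dfex$ (so that $D\not\subset\Vft$ and $L=\rx(D)$ is defined) is exactly the intended, though implicit, role of the hypothesis $m_1\geqslant 2$ in the paper's one-line justification.
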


Indeed, in this case all transversal types of the surface $\Vf$ are germs of plane curves, whose resolutions use only rational curves.

\subsection{Non-rational curves}\label{subsection non rational curves}
However, in general, the curve $C$ may not be rational, whence the need of a little additional information, namely about the identifications of local irreducible components of $\Skt$ along loops in $C$. The goal of this subsection is to make this precise.

Let $C$ be an irreducible curve of $\Courbes$. Again, let $p_3,\dots,p_l$ be the double points of $\Courbes$ on $C$. Let $p\in C\setminus \{p_3,\dots,p_l\}$. Then $\pi_1(C\setminus \{p_3,\dots,p_l\},p)$ acts on the set of local irreducible components of $\Skt$ at $p$ in the following way: let $[\gamma]\in \pi_1(C\setminus \{p_3,\dots,p_l\},p)$. Consider a tubular neighbourhood $U=\bigcup\limits_{i=0}^{j} U_i$ of $\gamma$ in $\Xt$ such that, on each $U_i$, we can have local coordinates in $\Xt$ in such a way as to get local equations as in \ref{gen pt} for $\Skt$. Then one can follow an irreducible component at $p$ along $\gamma$. By consistency of the cyclic order, this action is completely defined by the data of the number $\delta$ such that a component is sent on the one which is $\delta$ further in the cyclic order. Furthermore it is clear that:

\begin{lemma} 
For $[\gamma]$ in $\pi_1(C\setminus \{p_3,\dots,p_l\})$, the number $\delta$ is independent of the choice of the base point $p$ or the representative $\gamma$. $\square$
\end{lemma}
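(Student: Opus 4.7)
The plan is to exploit the fact that the cyclic order on local irreducible components of $\Skt$ introduced in Lemma \ref{ordre cyclique} is coherently defined in a neighbourhood of $C^\circ := C \setminus \{p_3,\dots,p_l\}$, and then to apply a standard monodromy argument.

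First I would establish independence of the representative $\gamma$ of its homotopy class. Given two loops $\gamma_0, \gamma_1$ based at $p$ that are homotopic through some $H \colon [0,1]^2 \to C^\circ$, I would cover the compact image $H([0,1]^2)$ by finitely many open sets on which $\Skt$ admits local equations of the form \eqref{eq Skt gen} and on which Lemma \ref{ordre cyclique} applies. The integer $\delta$ associated to the loop $H(\cdot,t)$ then varies continuously in $t$ while taking values in the discrete set $\Z/d\Z$, hence is constant. An equivalent, more conceptual formulation is that the restriction of $\norm \circ \K$ above $C^\circ$ is a $d$-sheeted topological covering, whose monodromy factors through $\pi_1(C^\circ, p)$ by definition.

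Next I would address independence of the base point. Given $p' \in C^\circ$ and a path $\alpha$ from $p$ to $p'$ in $C^\circ$, parallel transport along $\alpha$ (well-defined by the first step, applied to an open thickening of $\alpha$) yields a bijection $\tau_\alpha$ between the set of local irreducible components at $p$ and that at $p'$. By the consistency of the cyclic order expressed in Lemmas \ref{ordre cyclique} and \ref{regroupement pts doubles}, this bijection preserves the cyclic order; once the two fibres are cyclically labelled by $\Z/d\Z$, the map $\tau_\alpha$ thus acts as a cyclic shift by some constant $c \in \Z/d\Z$. If a loop $\gamma$ based at $p$ acts by shift $\delta$, then the conjugated loop $\alpha^{-1}\gamma\alpha$ based at $p'$ acts by shift $-c+\delta+c = \delta$ in $\Z/d\Z$.

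The only substantive work in this argument is the bookkeeping of the cyclic order in chart overlaps along a loop or a homotopy, but this has already been carried out in Lemmas \ref{ordre cyclique} and \ref{regroupement pts doubles}. Consequently the lemma is essentially a formal consequence of the coherence of the cyclic order together with the commutativity of the cyclic group $\Z/d\Z$, and I do not expect a real obstacle.
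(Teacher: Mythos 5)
Your argument is correct and fills in exactly the reasoning the paper leaves implicit (the lemma is stated with an empty proof, relying on the coherence of the cyclic order from Lemma \ref{ordre cyclique}): homotopy invariance is the standard monodromy argument for the $d$-sheeted covering over $C\setminus\{p_3,\dots,p_l\}$, and base-point independence follows because transport along a path is order-preserving, hence a cyclic shift, and shifts in $\Z/d\Z$ commute, so conjugation does not change $\delta$. No gap; this matches the paper's intended approach.
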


\begin{defn}\label{def switch}
The number $\delta$ defined above is called the \textbf{switch} associated to $\gamma$.
\end{defn}

Now, $\pi_1(C\setminus \{p_3,\dots,p_l\},p)$ can be generated by $\alpha_1,\dots,\alpha_{2g},\gamma_1,\dots,\gamma_l$, where $\alpha_1,\dots,\alpha_{2g}$ generate $\pi_1(C,p)$, and $\gamma_i$ is a ``simple loop'' around $p_i$, i.e. $\gamma_i\neq 1$ in $\pi_1(C\setminus \{p_1,\dots,p_l\},p)$ but $\gamma_i=1$ in $\pi_1(C\setminus \{p_1,\dots,p_{i-1},p_{i+1},\dots,p_l\},p)$

Lemmas \ref{ordre cyclique} and \ref{regroupement pts doubles}, together with the previous considerations, imply the following:

\begin{thm}\label{thm nombre courbes normalisation}{\bf (Structure of $\Ck$.)}

Let $C$ be a curve of $\Courbes$ of genus $g$, and let its star in $\Gcdtpart$ be
\begin{center}
\begin{tikzpicture}

\filldraw 
(-0.7,0) circle (0pt) node[above] {$(m_1;m_s,n_s)$} node[below] {$[g_s]$}
(6.7,0) circle (0pt) node[above] {$(m_{s+t};m_2,n_2)$} node[below] {$[g_{s+t}]$}
(6,0) circle (2pt) node {}
(0,0) circle (2pt) node {}
(3,0.8) circle (2pt) node {}
(3,1) circle (0pt) node[above] {$(m_1;m_2,n_2)$} 
(3,0.8) circle node[below] {$[g]$}
(0,1.6) circle (2pt) node[above] {$(m_1;m_3,n_3)$} node[below] {$[g_3]$}
(6,1.6) circle (2pt) node[above] {$(m_{s+1};m_2,n_2)$} node[below] {$[g_{s+1}]$}
(0.5,0.9) circle (0pt) node {$\vdots$}
(5.5,0.9) circle (0pt) node {$\vdots$}
;
\draw (0,0) -- node[above]{$\plus$} node[below]{$\mu_s$} (3,0.8);
\draw (0,1.6) -- node[above]{$\plus$} node [below]{$\mu_3$}(3,0.8);
\draw (6,0) -- node[above]{$\minus$} node[below]{$\mu_{s+t}$} (3,0.8);
\draw (6,1.6) -- node[above]{$\minus$} node[below]{$\mu_{s+1}$}(3,0.8);

\end{tikzpicture}

\end{center}

Let $\delta_i$ be the switch associated to $\alpha_i$, where $\alpha_1,\dots,\alpha_{2g}$ generate $\pi_1(C)$. Then $\Ck$ is the union of $$n_C:=gcd(m_1,\dots,m_{s+t},\delta_1,\dots,\delta_{2g})$$ disjoint connected irreducible real analytic oriented surfaces.
The set of connected components of $\Ck$ is endowed with a cyclic order induced by the one on the local irreducible components of $\kappa_p^{-1}(\Skt\cap U_p)$ at generic points $p$ of $C$. Denote $\overline{C_1},\cdots,\overline{C_{n_C}}$ those surfaces, where the numbering respects the cyclic order.

Furthermore, the common Euler characteristic $\chi\left(\overline{C}\right)$ of each of these surfaces verifies \begin{equation} \label{eq calcul carac euler} n_C\cdot \chi\left(\overline{C}\right)=\left(2-2g-\sum\limits_{i=3}^{s+t}\mu_i\right)\cdot gcd(m_1,m_2) + \sum\limits_{i=3}^{s+t}gcd(m_1,m_2,m_i)\cdot \mu_i.
\end{equation}

\end{thm}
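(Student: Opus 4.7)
The plan is to view the restriction $(\norm\circ\K) : \Ck \to C$ as a covering of degree $d := \gcd(m_1,m_2)$ over $C\setminus\{p_3,\dots,p_l\}$, whose monodromy takes values in the cyclic group $\Z/d\Z$ acting on the $d$ local components. The number of connected components of $\Ck$ will then be the number of orbits of this monodromy action, and the Euler characteristic will follow by additivity of $\chi$ together with a count of preimages at the $p_i$.

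First, Lemma \ref{ordre cyclique} gives $d$ ordered local components at every generic point of $C$ and the global coherence of the cyclic order. The monodromy representation $\pi_1(C\setminus\{p_3,\dots,p_l\},p)\to \Z/d\Z$ is therefore well defined, and its value on a loop is, by Definition \ref{def switch}, the switch of that loop. On the generators $\alpha_1,\dots,\alpha_{2g}$ of $\pi_1(C,p)$ these are $\delta_1,\dots,\delta_{2g}$ by hypothesis; it remains to identify the switch of a small loop $\gamma_i$ around each double point $p_i$.

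For this I work in the local model of Subsection \ref{triple points}. On one branch of $\kappa_{p_i}^{-1}(\Skt\cap U_{p_i})$, indexed by one of the $d_i:=\gcd(m_1,m_2,m_i)$ preimages of $p_i$ in $\NK^{-1}(p_i)$, the defining equation reads $y^{m_2/d_i}=\eta\,x^{m_1/d_i}z^{m_3/d_i}$ in the type $\plus$ case, and analogously with $m_1$ and $m_3$ exchanged in the type $\minus$ case. At $z=z_0\neq 0$ this factors into $d/d_i$ smooth local components indexed by a $(d/d_i)$-th root of unity. As $z$ traverses $\gamma_i$ once, the $(d/d_i)$-th root $\zeta=(\eta\, z_0^{m_3/d_i})^{1/(d/d_i)}$ entering the factorization is multiplied by $e^{2\pi i m_i/d}$, so the $d/d_i$ components inside the branch undergo a cyclic shift of $m_i/d_i$ steps in their internal order. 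By Lemma \ref{regroupement pts doubles}, one step in the internal order of a branch corresponds to $d_i$ steps in the global cyclic order on the $d$ local components at a nearby generic point, so the switch of $\gamma_i$ equals $\pm m_i$ modulo $d$, the sign depending only on the type of $p_i$ and being irrelevant for the subgroup it generates.

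Consequently the image $H\le\Z/d\Z$ of the monodromy is generated by $\delta_1,\dots,\delta_{2g}$ together with $m_3,\dots,m_{s+t}$, and
\[
|H|=\frac{d}{\gcd(d,\delta_1,\dots,\delta_{2g},m_3,\dots,m_{s+t})}=\frac{d}{n_C},
\]
using $d=\gcd(m_1,m_2)$ and $\gcd(d,m_i)=\gcd(m_1,m_2,m_i)$. This gives exactly $n_C$ orbits, hence $n_C$ connected components of $\Ck$, mutually permuted by the cyclic symmetry and therefore of equal Euler characteristic. Additivity of $\chi$, applied to the decomposition of $\Ck$ into the $d$-sheeted unramified cover of $C\setminus\{p_3,\dots,p_l\}$ and the $\sum_{i=3}^{s+t}d_i\mu_i$ discrete preimages of the double points, yields
\[
n_C\cdot\chi(\overline{C})=\chi(\Ck)=d\bigl(2-2g-\sum_{i=3}^{s+t}\mu_i\bigr)+\sum_{i=3}^{s+t}d_i\mu_i,
\]
which is the stated formula. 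The delicate step in this plan is the switch computation at $\gamma_i$: it requires carefully unwinding how the monodromy inside a single analytic branch at $p_i$ — itself a partial-normalization phenomenon — converts, through the step-size $d_i$ of Lemma \ref{regroupement pts doubles}, into the globally defined cyclic order on generic fibers.
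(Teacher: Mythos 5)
Your proposal is correct and takes essentially the same route as the paper: the component count rests on Lemmas \ref{ordre cyclique} and \ref{regroupement pts doubles} plus the switch formalism (your explicit computation that the switch of a small loop around $p_i$ is $\pm m_i$ modulo $\gcd(m_1,m_2)$ is exactly what the paper leaves implicit), and your additivity computation of $\chi$ is the paper's Riemann--Hurwitz argument for $\norm\circ\K$ restricted to $C$. Just note in passing that adjoining the $d_i$ points of $\NK^{-1}(p_i)$ cannot merge distinct monodromy orbits: the sheets adherent to such a point form a residue class modulo $d_i$ by Lemma \ref{regroupement pts doubles}, and $d_i=\gcd(d,m_i)$ already lies in the subgroup $H$ you generate, so the orbit count does give the number of connected components of $\Ck$ itself.
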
 
\begin{proof}[Proof of the last identity] This identity comes by  recognising $gcd(m_1,m_2)$ as the cardinal of the preimage of a generic point, and $gcd(m_1,m_2,m_i)$ as the cardinal of the preimage of a double point, and applying the Riemann-Hurwitz formula for the map $\norm\circ K$ restricted to the irreducible curve $C$.
\end{proof}

\begin{rk}
The collection of switches associated to the $\alpha_i$'s is of course not unique, but the number $n_C$ is. 
\end{rk}

\subsection{Preimage of an intersection point}
Now, the final information we need to define the graph $\Gn$ is the data above the double points of $\Courbes$. Let $C,C'$ be irreducible curves in $\Courbes$, and $p\in C\cap C'$, $p\in D_1\cap D_2\cap D_3$. Denote $d:=gcd(m_1,m_2,m_3)$. Denote $\overline{p^1},\cdots,\overline{p^d}$ the ordered preimages of $p$ by $\NK$ (see Discussion \ref{discussion ordre cyclique points doubles}). Recall the notations of Theorem \ref{thm nombre courbes normalisation}. Lemma \ref{regroupement pts doubles} implies the following:
 
\begin{lemma} \label{fact intersection.}
for any $\overline{p^i}\in \NK^{-1}(p)$, there are components $\overline{C_j},\overline{C_l'}$, respectively of \linebreak $\Ck$ and $\NK^*(C')$, such that $\overline{p^i}\in \overline{C_j}\cap \overline{C'_l}$. 

Furthermore, if $\overline{p^l}\in \overline{C_i}\cap \overline{C'_j}$, then $\overline{p^{l+1}}\in \overline{C_{i+1}}\cap \overline{C'_{j+1}}$.
\end{lemma}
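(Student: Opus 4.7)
The first claim is essentially tautological: any $\overline{p^i}\in\NK^{-1}(p)$ lies in $\NK^{-1}(C)\cap \NK^{-1}(C')=\Ck\cap\NK^*(C')$, so it sits in some connected component $\overline{C_j}$ of $\Ck$ and some connected component $\overline{C'_l}$ of $\NK^*(C')$. The content is the second assertion, which says that incrementing the index $l$ of $\overline{p^l}$ in the cyclic order at $p$ simultaneously increments the index of the component of $\Ck$ and of $\NK^*(C')$ containing it.

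My plan is to work entirely in the local chart $\Upt$ coming from the double point $p\in D_1\cap D_2\cap D_3$. In this chart, $\St\cap\Upt$ decomposes as the disjoint union of its (normalized) local irreducible components, indexed by $j\in\Z/d\Z$ via the explicit equations \eqref{eq multigerme point plus} or \eqref{eq multigerme point moins}, depending on whether $p$ is of type $\oplus$ or $\ominus$. By Discussion \ref{discussion ordre cyclique points doubles}, the cyclic order on $\{\overline{p^1},\dots,\overline{p^d}\}$ is exactly the order on these local irreducible components, and the $l$-th component contains a unique preimage of $p$, namely $\overline{p^l}$, together with a unique local branch of $\Ck$ and a unique local branch of $\NK^*(C')$ meeting at $\overline{p^l}$. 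So the assignment sending $l$ to the global component of $\Ck$ (resp.\ of $\NK^*(C')$) carrying the local branch at $\overline{p^l}$ is well-defined.

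Next, pick a generic point $q\in C$ with $\widetilde U:=\Upt\cap\Uqt\neq\emptyset$ and apply Lemma \ref{regroupement pts doubles}: the local branch of $\Ck$ at $\overline{p^l}$ that propagates into $\widetilde U$ lies in some component whose position in the cyclic order at $q$ is, say, $\sigma(l)$, and passing from $\overline{p^l}$ to $\overline{p^{l+1}}$ advances $\sigma(l)$ by one step in the cyclic order of $\Ck$. Running the same argument for $C'$ using a generic point $q'\in C'$ yields the analogous statement on the $\NK^*(C')$ side. Combining the two gives exactly the compatibility: if $\overline{p^l}\in\overline{C_i}\cap\overline{C'_j}$, then $\overline{p^{l+1}}\in\overline{C_{i+1}}\cap\overline{C'_{j+1}}$.

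The main bookkeeping obstacle is to verify that the cyclic order on the preimages $\overline{p^l}$ used at $p$ matches the cyclic orders induced separately on the components of $\Ck$ and of $\NK^*(C')$ at $p$ (and not, say, the reverse order on one of them). This is precisely the content of Lemma \ref{regroupement pts doubles}, applied once with the pair $(D_1,D_2)$ defining $C$ and once with the pair defining $C'$; the choice of orientation convention in Discussion \ref{discussion ordre cyclique points doubles} (reversing the direction at $\oplus$ vs.\ $\ominus$ points) was set up exactly so that these two applications give consistent increments, so no ad hoc sign adjustment is needed.
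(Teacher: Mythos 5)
Your proof is correct and follows the paper's own route: the paper states this lemma as a direct consequence of Lemma \ref{regroupement pts doubles} (together with the numbering conventions of Discussion \ref{discussion ordre cyclique points doubles} and Theorem \ref{thm nombre courbes normalisation}), and your argument is precisely an unpacking of that implication — the local decomposition at the double point gives the first (essentially tautological) claim, and applying the generic-point/double-point order compatibility once to $C$ and once to $C'$ (legitimate by the symmetry of the local models $y^{m_2}=x^{m_1}z^{m_3}$, resp. $x^{m_1}=y^{m_2}z^{m_3}$, in the two "axis" variables) gives the simultaneous increment of indices.
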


\subsection{The graph $\Gkdt$}\label{subsection graphe gkdt}In this subsection, we sum up what has been developped throughout the previous ones to describe the decorated graph $\Gkdt$.

If $p$ is a double point of $\Courbest$, $p\in D_1\cap D_2\cap D_3$, denote $d_p=gcd(m_1,m_2,m_3)$ the cardinal of $\NK^{-1}(p)$. 

Recall the notations of Section \ref{section Graphe de courbes} and Subsection \ref{subs graph cover}. Theorem \ref{thm nombre courbes normalisation} and Lemma \ref{fact intersection.} imply:
\begin{lemma}\index{Cyclic covering}
The graph $\Gkt$ is a cyclic covering of $\Gct$ with covering data $\left(\{n_C\}_{C\in \Courbest},\{d_p\}_{p\text{ double point}}\right)$.
\end{lemma}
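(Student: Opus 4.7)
The plan is to read this statement as a packaging of the preceding local-to-global analysis into a single combinatorial assertion about graph coverings. The definition of a cyclic covering of graphs (from Subsection \ref{subs graph cover}) prescribes, for each vertex $v$ and each edge $e$ of the base graph, fiber cardinalities $n_v$ and $d_e$, equipped with cyclic orders on the fibers, such that the incidence relations in the cover are determined by consecutive-to-consecutive identifications. So my task is to exhibit the data on $\Gkt \to \Gct$ and check the cyclic compatibility.

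First, I would set up the map on vertices. A vertex $v_C$ of $\Gct$ corresponds to an irreducible curve $C \in \Courbest$, and the vertices of $\Gkt$ above $v_C$ are the irreducible components of $\Ck = (\norm\circ \K)^*(C)$. By Theorem \ref{thm nombre courbes normalisation}, there are exactly $n_C$ such components $\overline{C_1},\dots,\overline{C_{n_C}}$, carrying the cyclic order inherited from the cyclic order on local irreducible components of $\kappa_p^{-1}(\Skt \cap U_p)$ at a generic point $p \in C$ (Lemma \ref{ordre cyclique}); Lemma \ref{regroupement pts doubles} guarantees this local cyclic order is compatible under change of chart.

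Next, I would define the map on edges. An edge $e_p$ of $\Gct$ corresponds to a double point $p$ of $\Courbest$ lying on a triple intersection $D_1 \cap D_2 \cap D_3$, and edges of $\Gkt$ above $e_p$ correspond to the points of $(\norm\circ \K)^{-1}(p)$. By Discussion \ref{discussion ordre cyclique points doubles}, there are $d_p = \gcd(m_1,m_2,m_3)$ such preimages $\overline{p^1},\dots,\overline{p^{d_p}}$, carrying a cyclic order induced from the local irreducible components of $\kappa_p^{-1}(\Skt \cap U_p)$.

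Finally, I would verify compatibility of incidences with the cyclic orders. For an edge $e_p$ of $\Gct$ joining $v_C$ and $v_{C'}$, Lemma \ref{fact intersection.} asserts that each $\overline{p^i}$ lies on exactly one component of $\Ck$ and one of $\norm^*(C')$, and that consecutive preimages $\overline{p^l}, \overline{p^{l+1}}$ pair with consecutive components $\overline{C_i},\overline{C_{i+1}}$ and $\overline{C'_j},\overline{C'_{j+1}}$. This is precisely the condition that the incidence of the edge-fibers and vertex-fibers be governed by a common rotation, which is the defining property of a cyclic covering with the prescribed covering data $\left(\{n_C\},\{d_p\}\right)$. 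The main obstacle is not any calculation — everything needed has been established — but rather lining up the cyclic orders coming from three different sources (generic-point charts on each of $C$ and $C'$, and the triple-intersection chart at $p$) so that the compatibility asserted in Lemma \ref{fact intersection.} directly yields the axioms of a cyclic covering; this is a careful bookkeeping step, but no new geometric input is needed.
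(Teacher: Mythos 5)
Your argument is correct and matches the paper's own reasoning: the paper obtains this lemma precisely by combining Theorem \ref{thm nombre courbes normalisation} (the $n_C$ components above each vertex, with their cyclic order) and Lemma \ref{fact intersection.} (the $d_p$ preimages of each double point, with consecutive preimages joining consecutive components), exactly as you do. The only addition you make is spelling out the bookkeeping that these cyclic orders, already shown compatible in Lemmas \ref{ordre cyclique} and \ref{regroupement pts doubles}, furnish the $\Z$-action required by the definition of a cyclic covering, which the paper leaves implicit.
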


However, in general, this data does not determine $\Gkt$ uniquely. See \cite{Nem00} for cases where this data is enough. In particular, if $(X,0)=(\C^3,0)$, $\pazocal{G}\left(\{n_C\}_{C\in \Courbest},\{d_p\}_{p\text{ double point}}\right)=0$, hence the covering data is enough, see \cite[Theorem 7.4.16]{NemSzi12}. 

If $v\in \PV(\Gct)$ is associated to a curve $C\in \Courbest$, denote $n_v:=n_C$, and denote $\overline{v_1},\cdots,\overline{v_{n_v}}$ the vertices of $\Gkt$ associated to $v$ in the covering. In the same fashion, if $e\in \PE(\Gct)$ is associated to a double point $p\in \Courbest$, denote $d_e:=d_p$, and denote $\overline{e_1},\cdots,\overline{e_{d_e}}$ the edges associated to $e$.

To help overcome the ambiguity, note that the structure of cyclic covering implies that if one knows the end-vertices of one of the edges $\overline{e_i}$ for each $e\in \PE(\Gct)$, then one knows $\Gkt$, up to isomorphism of cyclic coverings. It is therefore enough, for every double point $p\in C\cap C'$ of $\Courbest$, to figure out two components $\overline{C_i}$ and $\overline{C'_l}$ intersecting at one of the $\overline{p^j}$'s.

%We propose here a non-canonical way to present the needed information. Let $\Gamma$ be a covering tree of $\Gct$, in the following sense: $$\VG=\PV(\Gct)\text{, }\EG\subset \PE(\Gct)$$ and $\Gamma$ is a disjoint union of trees, as many as the connected components of $\Gct$.
% NUL, ÇA REPREND EN MOINS BIEN CE QUI EST DIT AVANT
%For $\Gamma_i$ a connected component of $\Gamma$, choose a vertex $v_C\in \PV(\Gamma_i)$, and a local irreducible component of $\Skt$ at a generic point of $p$ of the curve $C$. Now, walk through the tree $\Gamma_i$ from the vertex $v_C$ by following the edges. We will pick a local irreducible component at each curve $C_j$ corresponding to For each edge from $v_{C_1}$ to $v_{C_2}$ corresponding to a double point $q$ of $\Courbest$, pick a local irreducible component of $\Skt$ at a generic point $p_2$ of $C_2$ that is in the same irreducible component of $\Skt$ at $q$ than the one that was picked for $C_1$.

%To access this data, one may start from a covering tree $\Tau$ of the connected graph $\Gct$ (see Fact \ref{fact connected graphs.} for the connectedness). Select any vertex $\overline{v}$ of $\Gkt$, corresponding to a surface $\overline{C_v}$, and walk accross $\Tau$ from neighbour to neighbour, and for each time 

\begin{defn}{\bf (Decorations of $\Gkdt$.)}

The decorated graph $\Gkdt$ is obtained by decorating every vertex and every edge of $\Gkt$ with the decorations of its image in $\Gcdtpart$, \textbf{except for the genus decorations}: if $v\in \PV(\Gct)$ corresponds to a curve $C$, for each $\overline{v_i}$ corresponding to $v$, replace the genus decoration by the Euler characteristic $\chi\left(\overline{C}\right)$ of $\overline{C_i}$ computed in Theorem \ref{thm nombre courbes normalisation}.
\end{defn}

\section{The resolution step and a variety $(\Sb,E_t)$}\label{resolution step}

\subsection{A variety $(\Sb,E_t)$}

The final step of the construction of the morphism $\Pi$ announced in Section \ref{section tower} is the resolution $\pi$ of the remaining isolated singular points of $\St$. For each singular point $\pt$ of $\St$, we are going to decribe a resolution $\pi_\pt$ of the singularity $(\St,\pt)$, in the following subsections \ref{subsection resol plus} and \ref{subsection resol moins}.

Once this process is complete, denote $(\Sb,E_t)$ the variety obtained from $(\St,\Courbeskt)$ by resolving its singular points, and $$\pi\colon (\Sb,E_t) \rightarrow (\St,\Courbeskt)$$ the global resolution of $(\St,\Courbeskt)$ obtained by gluing the $\pi_\pt$'s.

Finally, denote $$\Pi:=\tr\circ \norm\circ K \circ\pi\colon (\Sb,E)\rightarrow (\Sk,0)$$ the composed morphism, which is a resolution of the $4$-dimensional real analytic singularity $(\Sk,0)$.

Note that the construction of $\Pi$ ensures that it is an isomorphism of real analytic varieties outside the origin.

Let $\pt\in \Ct\cap \Cpt$, where $\Ct,\Cpt$ are irreducible real surfaces of $\Courbeskt$, be a potentially singular point of $\St$. We face two different situations, according to the type of the point $$p:=(\norm\circ K)(\pt)\in D_1\cap D_2\cap D_3,$$ where $D_1\in \Do \cup \Dg, D_2\in \Df$ and $D_3 \in \Dt$. Denote $$C:=(\norm\circ K)\left(\Ct\right),C':=(\norm\circ K)\left(\Cpt\right).$$ Denote also $\Ut$ the connected component of $\Upt$ containing $\pt$, and $d:=(m_1,m_2,m_3)$.

\subsection{Point of type $\plus$}\label{subsection resol plus}

If $p$ is of type $\plus$, then the edge associated to $\pt$ in $\Gkdt$ is of the form 
\begin{center}
\begin{tikzpicture}

\filldraw 
(0,0.3) circle (0pt) node[above] {$(m_2;m_1,n_1)$} 
(3,0.3) circle (0pt) node[above] {$(m_2;m_3,n_3)$}
(0,0) circle (2pt) node[below] {} 
(3,0) circle (2pt) node[below] {}
(0,-0.2) circle (0pt) node[below] {$[\chi]$} 
(3,-0.2) circle (0pt) node[below] {$[\chi']$}
(-0.2,0) circle (0pt) node[left] {$v_{\Ct}$} 
(3.2,0) circle (0pt) node[right] {$v_{\Cpt}$} 
;
\draw (0,0) -- node[above]{$\plus$} (3,0);

\end{tikzpicture}
\end{center}

\noindent and, on $\Ut$, the equation of $\St$ is of the form 
$$\St\cap \Ut=\left(\left\{y^{m_2/d}=e^{2ij\pi/d} x^{m_1/d}z^{m_3/d}\right\}\cap \Upk\right)^N.$$
(see equation (\ref{eq multigerme point plus}) in the proof of Lemma \ref{regroupement pts doubles}.)

Denote $$\pi_\pt \colon \Uti \rightarrow \St \cap \Ut$$ the resolution of the surface $\left(\left\{x^{m_1/d}=e^{2ij\pi/d} y^{m_2/d}z^{m_3/d}\right\}\right)^N\cap \Ut$ described in Appendix \ref{appendix resol HJ}, up to exchanging the roles of the coordinates $x$ and $y$. In the sequel we refer to this Appendix for the notations.

Note that  $$\Ct\cap \Ut=\left(norm^*\left(\{y=x=0\}\right)\right)\cap \Ut$$
and
$$\Cpt\cap \Ut=\left(norm^*\left(\{y=z=0\}\right)\right)\cap \Ut.$$
Denote $\Cti$, $\Cpti$ the strict transforms of $\Ct,\Cpt$ by the morphism $\pi$.

Denote $$\gk:=g \circ \tr \circ \norm \circ \kappa_p$$ the pullback of the function $g$ to $\{y^{m_2}=x^{m_1}z^{m_3}\}\cap \Upk$. Up to a unit, $$\gk=x^{n_1}z^{n_3}.$$

Under the morphism $\pi$, the preimage of $$\Courbeskt\cap \Ut=norm^* \left(\left\{xz=y=0\right\}\right)\cap \Ut$$ is a chain of complex curves, namely $$Str\left(\frac{m_2}{d};\frac{m_1}{d},\frac{m_3}{d}|0;n_1,n_3\right)$$ where the notation $Str$ is explained in Appendix \ref{appendix resol HJ}. The multiplicities on the curves of the string are those of the function $$\gti:=g\circ \Pi$$ which is the pullback of the function $g$ to $\Sb\cap \pi^{-1}(\Ut)$, and where $\widetilde{C_0}=\Cpti$, and $\widetilde{C_{l+1}}=\Cti$.

\noindent Denote $\Uti:=\pi^{-1}\left(\Ut\right)$. Note that the morphism $\pi\colon \Uti \rightarrow \Ut$ is an isomorphism outside of $\pt$.

\begin{discussion}{\bf Orientation compatibilities, point $\plus$.}

Let us remind here the choices of orientation made in subsection \ref{orientation St}. 

The pullbacks of the orientations of the curves $\Ct\cap \Ut,\Cpt\cap \Ut$ by the biholomorphism $\pi$ are again the orientations given by their local complex structure. 

Each new curve $\widetilde{C_i}$ is taken oriented by \textbf{its complex structure}. 

The pullback of the orientation of $\St\cap \Ut$ by the bimeromorphism $\pi\colon \Uti \rightarrow \Ut$ is, again, the orientation given by the complex structure. Now, the compatibility of the complex orientations implies that at each intersection point $\widetilde{C_i}\cap \widetilde{C_{i+1}}$, the combination of the orientations of the two curves gives the ambient orientation of $\Uti$.
\end{discussion}

\begin{rk}\label{strict transform OK}
It is useful in practice to notice that, if $m_2=1$, the point $\pt$ is already a smooth point of $\St$. The morphism $\pi$ described in Appendix \ref{appendix resol HJ} is in this case an isomorphism.
\end{rk}

In the end, the preimage of $(\Courbeskt,\pt)$ by $\pi$ is a chain of complex curves, whose dual decorated graph is the bamboo of figure \ref{fig:chaineplus}, where the $\plus$ signs refer to the orientation compatibilities. The integers $\mu_i$ represent the multiplicities of $\gti$ on each curve. For the definition of the multiplicity of $\gti$ on the real surfaces $\Cti$ and $\Cpti$, we refer to Definition \ref{def multiplicity}.

\begin{figure}[h]
\begin{center}

        \includegraphics[totalheight=1.1cm]{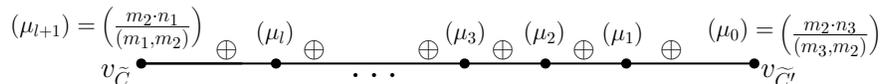}
    \caption{The string $Str^\plus\left(\frac{m_2}{d};\frac{m_1}{d},\frac{m_3}{d}|0;n_1,n_3\right)$.}
    \label{fig:chaineplus}
\end{center}
\end{figure}

Recall that $(a,b)$ denotes $gcd(a,b)$.

\subsection{Point of type $\minus$}\label{subsection resol moins}
If $p$ is of type $\minus$, then the edge associated to $\pt$ in $\Gkdt$ is of the form 
\begin{center}
\begin{tikzpicture}

\filldraw 
(0,0.3) circle (0pt) node[above] {$(m_2;m_1,n_1)$} 
(3,0.3) circle (0pt) node[above] {$(m_3;m_1,n_1)$}
(0,0) circle (2pt) node[below] {} 
(3,0) circle (2pt) node[below] {}
(0,-0.2) circle (0pt) node[below] {$[\chi]$} 
(3,-0.2) circle (0pt) node[below] {$[\chi']$}
(-0.2,0) circle (0pt) node[left] {$v_{\Ct}$} 
(3.2,0) circle (0pt) node[right] {$v_{\Cpt}$} 
;
\draw (0,0) -- node[above]{$\minus$} (3,0);

\end{tikzpicture}
\end{center}

\noindent and, on $\Ut$, the equation of $\St$ is of the form 
$$\St\cap \Ut=\left(\left\{ x^{m_1/d}=e^{2ij\pi/d}y^{m_2/d}z^{m_3/d}\right\}\cap \Upk\right)^N.$$

See equation (\ref{eq multigerme point moins}) in the proof of Lemma \ref{regroupement pts doubles}.

Denote $$\pi_\pt \colon \Uti \rightarrow \St \cap \Upt$$ the resolution of the surface $\left(\left\{x^{m_1/d}=e^{2ij\pi/d} y^{m_2/d}z^{m_3/d}\right\}\right)^N\cap \Ut$ described in Appendix \ref{appendix resol HJ}. Again, in the sequel we refer to this section for the notations.

Note that  $$\Ct\cap \Ut=\left(norm^*\left(\{x=z=0\}\right)\right)\cap \Ut$$
and
$$\Cpt\cap \Ut=\left(norm^*\left(\{x=y=0\}\right)\right)\cap \Ut.$$
Denote $\Cti$, $\Cpti$ the strict transforms of $\Ct,\Cpt$ by the morphism $\pi$.

Denote $$\gk:=g \circ \tr \circ \norm \circ \kappa_p$$ the pullback of the function $g$ to $\{x^{m_1}=y^{m_2}z^{m_3}\}\cap \Upk$. Up to a unit, $$\gk=x^{n_1}.$$

Under the morphism $\pi_\pt$, the preimage of $$\Courbeskt\cap \Ut=norm^* \left(\left\{yz=x=0\right\}\right)\cap \Ut$$ is a chain of complex curves, namely $$Str\left(\frac{m_1}{d};\frac{m_2}{d},\frac{m_3}{d}|n_1;0,0\right)$$ where the notation $Str$ has been introduced in \ref{appendix resol HJ}. The multiplicities on the curves of the string are those of the function $$\gti:=g\circ \Pi$$ the pullback of the function $g$ to $\Sb\cap \pi^{-1}(\Ut)$, and where $\widetilde{C_0}=\Cpti$, and $\widetilde{C_{l+1}}=\Cti$.

\noindent Note that the morphism $\pi_\pt \colon \Uti \rightarrow \Ut$ is an isomorphism outside of $\pt$.
%$\Cti:=\pi^{-1}(\Ct\cap \Ut)$, $\Cpti:=\pi^{-1}(\Cpt\cap \Ut)$

\begin{discussion}{\bf Orientation compatibilities, point $\minus$.}

Let us remind here the choices of orientation made in subsection \ref{orientation St}. 

The pullbacks of the orientations of the curves $\Ct\cap \Ut,\Cpt\cap \Ut$ by the biholomorphism $\pi_\pt$ are again the opposites of the orientations given by their local complex structure. 

Each new curve $\widetilde{C_i}$ is taken oriented by \textbf{the opposite of its complex structure}. 

The pullback of the orientation of $\St\cap \Ut$ by the bimeromorphism $\pi_\pt\colon \Uti \rightarrow \Ut$ is, again, the opposite of the orientation given by the complex structure. Now, the compatibility of the complex orientations implies that at each intersection point $\widetilde{C_i}\cap \widetilde{C_{i+1}}$, the combination of the orientations of the two curves gives the opposite of the ambient orientation of $\Uti$.
\end{discussion}

\begin{figure}[h]
\begin{center}

        \includegraphics[totalheight=1.1cm]{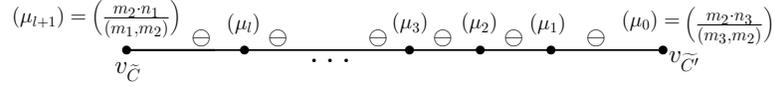}
    \caption{The string $Str^\minus\left(\frac{m_1}{d};\frac{m_2}{d},\frac{m_3}{d}|n_1;0,0\right)$.}
    \label{fig:chainemoins}
\end{center}
\end{figure}

Recall that $(a,b)$ denotes $gcd(a,b)$.

In the end, the preimage of $(\Courbeskt,\pt)$ by $\pi_\pt$ is a chain of complex curves, whose dual decorated graph is the bamboo of figure \ref{fig:chainemoins}, where the $\minus$ signs refer to the orientation compatibilities. The integers $\mu_i$ represent the multiplicities of $\gti$ on each curve. Again, for the definition of the multiplicity of $\gti$ on the real surfaces $\Cti$ and $\Cpti$, see Definition \ref{def multiplicity}.

\subsection{The decorated graph $\Gmult$}
The nature of the resolution process described in Appendix \ref{appendix resol HJ}, affecting each coordinate axis in only one point, whose preimage is a point, implies:
\begin{lemma}\label{fact euler char}
If $\Ct$ is an irreducible surface of $\Courbeskt$, and $\Cti$ is the strict transform of $\Ct$ by $\pi$, then \begin{equation}\label{eq euler char}
\chi(\Cti)=\chi(\Ct).
\end{equation} Furthermore, the surfaces added in the resolution process are all of genus $0$. 
\end{lemma}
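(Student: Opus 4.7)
The plan is to exploit the fact that $\pi$ is already an isomorphism away from the finitely many isolated singular points $\pt$ of $\St$ described in Sections~\ref{subsection resol plus} and \ref{subsection resol moins}. All of the modification takes place at these points, so the effect on any single component $\Ct$ is purely local.

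First I would observe that if $\pt$ is one of these isolated singular points and $\pt\in \Ct$, then in the local picture $\Ct$ appears as (the normalization of) one of the two coordinate axes $\{x=z=0\}$, $\{x=y=0\}$ or $\{y=x=0\}$, $\{y=z=0\}$ in the local model of Appendix~\ref{appendix resol HJ}. The crucial property of the Hirzebruch--Jung resolution recalled there is that, although a chain of new curves is inserted in between them, each of the two axes has a strict transform which meets the chain in a single point, namely the end of the chain on its side. In particular the restriction $\pi|_{\Cti}\colon \Cti\to \Ct$ has a single point over $\pt$. Since $\pi$ is an isomorphism outside the singular points of $\St$, this restriction is a proper continuous bijection between compact Hausdorff spaces over a neighbourhood of $\pt$, hence a homeomorphism there. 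Gluing over the finite set of points $\pt\in \Sing(\St)\cap \Ct$ gives that $\pi|_{\Cti}\colon \Cti\to \Ct$ is a global homeomorphism, and therefore $\chi(\Cti)=\chi(\Ct)$.

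For the second assertion, it suffices to invoke the description of $\pi$ given in Appendix~\ref{appendix resol HJ}: the local singularities of $\St$ are cyclic quotient singularities (of Hirzebruch--Jung type), and the standard Hirzebruch--Jung resolution replaces such a singular point by a chain (bamboo) of smooth rational curves. Consequently every new irreducible surface produced by $\pi$ is the smooth compactification of an affine line appearing in one of these chains, hence has genus $0$.

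The only point that actually requires a short justification is the bijectivity claim on each $\Ct$: here I would simply read it off the explicit formulas of Appendix~\ref{appendix resol HJ}, namely that in the HJ string the axis $\widetilde{C_0}$ (resp.\ $\widetilde{C_{l+1}}$) meets only the adjacent component of the exceptional chain, so that the preimage of the singular point on the axis is reduced to a single smooth point of $\Sb$. No genus computation or Euler characteristic formula is needed beyond this.
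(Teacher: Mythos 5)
Your proof is correct and follows the same route as the paper, which justifies the lemma precisely by the observation that the Hirzebruch--Jung resolution of Appendix \ref{appendix resol HJ} affects each coordinate axis in only one point whose preimage in the strict transform is a single point, and that the inserted exceptional curves form a chain of rational curves. Your additional remark that a proper continuous bijection between the compact curves $\Cti$ and $\Ct$ is a homeomorphism just makes explicit why this yields $\chi(\Cti)=\chi(\Ct)$.
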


\begin{defn}
Denote $\Gmult$ the graph obtained from $\Gkdt$ by introducing the strings described in Subsections \ref{subsection resol plus} and \ref{subsection resol moins}, where 
\begin{itemize}
\item Vertices coming from arrowheads are still represented as arrowheads, and every vertex is decorated with the corresponding multiplicity of $\gti$.
\item The vertex $v_{\Cti}$ corresponding to the strict transform of the curve $\Ct$ by $\pi$ has genus decoration \begin{equation}\label{eq final genus}
\left[g_{\Cti}\right]=[1-\frac{\chi(\Ct)}{2}].
\end{equation}
and the new vertices introduced in the strings have genus decoration $0$.
\end{itemize}
\end{defn}

The genus decorations of Equation (\ref{eq final genus}) are motivated by the Euler characteristic identities of Equation (\ref{eq euler char}). For the computation of $\chi\left(\overline{C}\right)$, see Equation (\ref{eq calcul carac euler}) of Theorem \ref{thm nombre courbes normalisation}.

\section{The boundary of the Milnor fiber}\label{section bdry Milnor fiber}
Now, one has all the information required to compute the boundary of the Milnor fiber of $f$. Recall that this one is identified \emph{via} Proposition \ref{prop diffeomorphic boundaries} to $\partial \Sk=\rho_{|\Sk}^{-1}(\varepsilon)$, for any $\varepsilon >0$ small enough in the sense of Definition \ref{def eps small}. 

Denote $\widetilde{\rho}:=\rho\circ \Pi$. The fact that $\Pi$ is an orientation-preserving real analytic isomorphism outside $0$ allows us to say that $\partial \Sk$ is orientation-preserving homeomorphic to its preimage by $\Pi$, which is the boundary $\partial T$ of the tubular neighbourhood $$T:=\{\widetilde{\rho}\leqslant \varepsilon\}$$ of the preimage $E$ of the origin by $\Pi$.

Now, $\gti$ and $\widetilde{\rho}$ are respectively an adapted and a rug function for the configuration $E$ in $\Sb$. Theorem \ref{thm corresp boundary plumbing graph} implies that $$\partial T \simeq M_{\Gdp{E}{\Sb}},$$ where the equivalence symbol denotes an orientation-preserving homeomorphism.

In conclusion, the orientation-preserving diffeomorphism between $\partial \Sk$ and $\partial T$ implies $$\partial \Sk \simeq M_{\Gdp{E}{\Sb}},$$ where the equivalence symbol denotes an orientation-preserving diffeomorphism.

\begin{defn}
Denote $\Gresoldc$ the graph obtained from $\Gmult$ by keeping the genera decorations and replacing the multiplicity decorations of nodal vertices by the self-intersection decorations, using Lemma \ref{lemma compute self-intersections}, then removing the arrowhead vertices.
\end{defn}
The construction of the decorations of $\Gresoldc$ is made in order to have the equality $\Gresoldc=\Gdp{E}{\Sb}$.

Finally we get the following generalization of \cite[Theorem 10.2.10]{NemSzi12}:

\begin{thm}\label{thm main}
The boundary of $F$ is a graph manifold, and a possible plumbing graph for $\partial F$ is the decorated graph $\Gresoldc$, which has only nonnegative genera decorations.
\end{thm}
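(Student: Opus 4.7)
The plan is to string together the identifications built up throughout Sections \ref{Sk}--\ref{resolution step}, reducing the boundary of the Milnor fiber to the boundary of a tubular neighbourhood of $E$ in $\Sb$, and then invoke the standard correspondence between such boundaries and graph manifolds.

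First, Proposition \ref{prop diffeomorphic boundaries} delivers an orientation-preserving diffeomorphism $\partial F \simeq \partial \Sk$, so the problem becomes that of describing $\partial \Sk$. The composed morphism $\Pi = \tr \circ \norm \circ \K \circ \resol : (\Sb,E) \to (\Sk,0)$ has been arranged to be an orientation-preserving real analytic isomorphism outside $E$ (respectively $0$). Consequently, for $\varepsilon>0$ small enough, $\widetilde{\rho} := \rho\circ \Pi$ is a rug function for $E$ in $\Sb$, and $\Pi$ identifies $\partial \Sk$ with the boundary $\partial T$ of the closed tubular neighbourhood $T := \{\widetilde{\rho} \leq \varepsilon\}$ of $E$. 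Similarly $\gti := g\circ \Pi$ plays the role of an adapted function for $E$, with its multiplicities already recorded on the vertices of $\Gmult$.

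Second, since $E$ is a simple configuration of compact smooth oriented real analytic surfaces in the oriented real analytic $4$-manifold $\Sb$, I would invoke Theorem \ref{thm corresp boundary plumbing graph} to obtain an orientation-preserving homeomorphism $\partial T \simeq M_{\Gdp{E}{\Sb}}$, and then verify by direct inspection of the construction that $\Gresoldc = \Gdp{E}{\Sb}$. The vertices and edges of the underlying graph match by definition, and the decorations match because: the genus decorations come from Equation (\ref{eq final genus}) combined with Lemma \ref{fact euler char}, the sign decorations $\plus/\minus$ on edges record the orientation compatibilities worked out in the Discussions of Sections \ref{triple points} and \ref{resolution step}, and the Euler-number decorations are the outputs of Lemma \ref{lemma compute self-intersections} applied to the multiplicities of $\gti$ on $\Gmult$. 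Since two orientation-preserving homeomorphic $3$-manifolds are orientation-preserving diffeomorphic (invoking Munkres as in the proof of Proposition \ref{prop diffeomorphic boundaries}), one concludes that $\partial F$ is a graph manifold with plumbing graph $\Gresoldc$.

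The one step that requires a separate argument, which I would treat as the main remaining obstacle, is showing that every genus decoration appearing in $\Gresoldc$ is nonnegative. By Lemma \ref{fact euler char} the vertices introduced by the Hirzebruch--Jung resolution strings of Section \ref{resolution step} are all spheres, so it suffices to bound $\chi(\overline{C})$ for each irreducible surface $\overline{C}$ of $\Courbesk$ covering a curve $C\in \Courbes$. Starting from the Riemann--Hurwitz identity (\ref{eq calcul carac euler}) and using $\gcd(m_1,m_2,m_i) \leq \gcd(m_1,m_2)$ termwise, one obtains $n_C\cdot \chi(\overline{C}) \leq (2-2g)\gcd(m_1,m_2)$, and then the divisibility $n_C \mid \gcd(m_1,m_2)$ built into the definition of $n_C$ in Theorem \ref{thm nombre courbes normalisation} yields $\chi(\overline{C}) \leq 2(1-g) \leq 2$, which is exactly what is needed for (\ref{eq final genus}) to produce a nonnegative genus. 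The truly delicate work of the whole argument does not lie in this final bound, but rather in the careful bookkeeping of orientations and of covering data carried out in Sections \ref{section norm loc models}--\ref{section configuration complexification}, upon which the clean identification $\Gresoldc = \Gdp{E}{\Sb}$ relies.
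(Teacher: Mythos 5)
Your first two paragraphs reproduce the paper's argument essentially verbatim: Proposition \ref{prop diffeomorphic boundaries} to pass from $\partial F$ to $\partial \Sk$, the fact that $\Pi$ is an isomorphism outside the origin so that $\partial\Sk$ is identified with $\partial T=\partial\{\widetilde{\rho}\leqslant\varepsilon\}$ with $\widetilde{\rho}=\rho\circ\Pi$ a rug function and $\gti$ an adapted function, then Theorem \ref{thm corresp boundary plumbing graph} giving $\partial T\simeq M_{\Gdp{E}{\Sb}}$, the equality $\Gresoldc=\Gdp{E}{\Sb}$ holding by construction of the decorations (Lemma \ref{fact euler char}, the orientation discussions, Lemma \ref{lemma compute self-intersections}), and Munkres to upgrade to a diffeomorphism. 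This is exactly the paper's route.

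The problem is your last paragraph, where you locate the ``main remaining obstacle'' in the nonnegativity of the genera and argue it via Riemann--Hurwitz; that argument is flawed. From $n_C\cdot\chi\left(\overline{C}\right)\leqslant(2-2g)\gcd(m_1,m_2)$ and $n_C\mid\gcd(m_1,m_2)$ you can only conclude $\chi\left(\overline{C}\right)\leqslant(2-2g)\cdot\frac{\gcd(m_1,m_2)}{n_C}$, and when $g=0$ and $n_C<\gcd(m_1,m_2)$ this bound is $2\gcd(m_1,m_2)/n_C>2$, so it does not give what Equation (\ref{eq final genus}) requires; dividing by $n_C$ only improves the bound when $2-2g\leqslant 0$. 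Fortunately no such estimate is needed: by Theorem \ref{thm nombre courbes normalisation} each $\overline{C_i}$ is a \emph{closed connected orientable} real analytic surface, so $\chi\left(\overline{C}\right)\leqslant 2$ automatically and $1-\chi\left(\overline{C}\right)/2$ is literally its genus, a nonnegative integer (preserved by $\pi$ thanks to Lemma \ref{fact euler char}); the vertices added by the Hirzebruch--Jung strings are rational. So the nonnegativity claim is immediate from the construction rather than a delicate point, and with this replacement your proof is correct and coincides with the paper's.
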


This implies in particular our Theorem of the Introduction.
%\section{Computational aspect}
%
%\subsection{The graph $\Gct$}
%Ici on pose le graphe dual sans décoration, puis on introduit les décorations nécessaires dans les deux sous-sections suivantes, le but étant que la dernière sous-section consiste juste en "voilà"
%\subsection{Monodromy over a curve}
%
%\subsection{Monodromy over a loop in $\Gct$}
%
%\subsection{The decorated graph $\Gcdt$}
%
%\subsection{Computation on graphs}

\begin{appendices}

\section{Tubular neighbourhoods and graph manifolds}
Graph manifolds can appear naturally as boundaries of neigbourhoods of complex curves in smooth complex surfaces, see \cite{Mum61}. In our context, they will appear as boundaries of neighbourhoods of real surfaces inside a smooth $4$-manifold.

\begin{defn}{\bf (Simple configuration of surfaces, and its plumbing dual graphs.)}\label{def simple conf of surfaces, plumbing graphs.}

Let $\St$ be a $4$-dimensional oriented real analytic manifold. A \textbf{simple configuration of compact real analytic surfaces in $\St$} is a subset $E\subset \St$ such that:
\begin{enumerate}
\item $E=\bigcup\limits_{finite}E_i$, such that each $E_i$ is an oriented closed smooth real analytic surface.
\item For all $i\neq j \neq k \neq i$, the intersection $E_i\cap E_j \cap E_k$ is empty.
\item For all $i\neq j$, the intersection $E_i \cap E_j$ is either empty or transverse. In particular, it is a finite union of points.
\end{enumerate}
In this setting, one defines a \textbf{plumbing dual graph} $\Gdp{E}{\St}$ of $E$ in $\St$ by decorating its dual graph in the following way:
\begin{enumerate}
\item Decorate each vertex $v_{E_i}$ by the self-intersection $e_i$ of $E_i$ in $\St$ and by the genus $[g_i]$ of the surface $E_i$. 
\item Decorate each edge of $\Gra[E]$ corresponding to the intersection point $p$ of $E_i\cap E_j$, by $\oplus$ if the orientation of $E_i$ followed by the orientation of $E_j$ is equal to the orientation of $\St$ at $p$, and by $\ominus$ otherwise. 
\end{enumerate}
\end{defn}

\begin{defn}\label{def rug function}(See \cite{Dur83}.)

Let $E$ be a simple configuration of compact orientable real analytic surfaces in an oriented $4$-dimensional real analytic manifold $\St$. In this context, we call \textbf{rug function} any real analytic proper function $\rho\colon \St \rightarrow \R_+$ such that $\rho^{-1}(0)=E$.
\end{defn}

\begin{theorem}\label{thm corresp boundary plumbing graph}
Let $E$ be a simple configuration of compact orientable real analytic surfaces in an oriented $4$-dimensional real analytic manifold $\St$, that admits a rug function $\rho$ as in Definition \ref{def rug function}. Then, for $\varepsilon >0$ small enough, the boundary of the oriented $4$-manifold $\{\rho\leqslant\varepsilon\}$ is orientation-preserving homeomorphic to the graph manifold associated to the graph $\Gdp{E}{\St}$.
\end{theorem}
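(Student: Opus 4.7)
The plan is to realize $\{\rho\leqslant\varepsilon\}$ as a plumbing of disk bundles over the surfaces $E_i$ and then appeal to the standard identification of boundaries of such plumbings with graph manifolds. The starting point is to check that, for $\varepsilon>0$ sufficiently small, $\{\rho\leqslant\varepsilon\}$ is a closed regular neighbourhood of $E$ in $\St$, and in particular deformation retracts onto $E$. This follows from the fact that $\rho$ is real analytic and proper with $\rho^{-1}(0)=E$: by \L ojasiewicz inequalities, outside $E$ the gradient of $\rho$ is non-zero on small enough level sets, so one can build a retraction onto $E$ by integrating $-\nabla\rho$, and for generic small $\varepsilon$ the hypersurface $\{\rho=\varepsilon\}$ is smooth and transverse to each $E_i$ near $\partial(\text{tube around }E_i)$.

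Next, I would cut the neighbourhood into local pieces indexed by the strata of $E$. For each intersection point $p\in E_i\cap E_j$, transversality gives real analytic coordinates $(x_1,x_2,y_1,y_2)$ on a small ball $B_p\subset \St$ in which $E_i=\{y_1=y_2=0\}$ and $E_j=\{x_1=x_2=0\}$; up to shrinking $\varepsilon$, the set $\{\rho\leqslant\varepsilon\}\cap B_p$ is ambient-isotopic to the standard ``plumbing box'' $D^2\times D^2$ glued to the disk bundles of $E_i$ and $E_j$ along two solid tori $D^2\times \partial D^2$ and $\partial D^2\times D^2$. Away from the intersection points, each surface $E_i$ carries a real analytic normal disk bundle $N_i\to E_i^0$, where $E_i^0:=E_i\setminus \bigcup_p \mathring{D}_p$ is obtained by removing an open disk around each intersection point of $E_i$ with other components. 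The orientation of $\St$ together with the chosen orientation of $E_i$ orients the fibers of $N_i$, hence determines the Euler number of $\partial N_i\to E_i^0$, which by definition is the self-intersection $e_i=E_i\cdot E_i$ in $\St$.

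Gluing these local pieces together realizes $\{\rho\leqslant\varepsilon\}$ as the total plumbing built from the $N_i$'s and the standard plumbing boxes at the intersection points, which is exactly the $4$-dimensional plumbing associated to the decorated dual graph of $E$. The sign decoration of each edge now requires checking a single local orientation issue: at $p\in E_i\cap E_j$, the gluing of the two solid tori $\partial N_i|_{\partial D_p}$ and $\partial N_j|_{\partial D_p}$ along the common torus $\partial D_p^{(i)}\times \partial D_p^{(j)}$ exchanges ``base'' and ``fiber'' circles, and its sign is dictated by whether the direct sum of the orientations of $E_i$ and $E_j$ at $p$ recovers the ambient orientation of $\St$. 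This is precisely the convention used in Definition \ref{def simple conf of surfaces, plumbing graphs.} to decorate the edge with $\oplus$ or $\ominus$, and matches the gluing matrix $\epsilon\begin{bmatrix}0&1\\1&0\end{bmatrix}$ appearing in the definition of $M_\Gamma$.

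The main obstacle, in my view, is not the plumbing identification itself (which follows the classical argument of Mumford and Neumann once the tubular structure is available) but rather justifying that the rug function $\rho$ provides a neighbourhood with the correct local product structure across the intersection points: one must argue that the ``corners'' where two tubular neighbourhoods meet can be straightened by an ambient isotopy so that the resulting neighbourhood is genuinely obtained by plumbing disk bundles, rather than some homotopically equivalent but a priori different object. This is handled by a standard smoothing-of-corners argument analogous to the one used at the end of the proof of Proposition~\ref{prop diffeomorphic boundaries}, combined with the uniqueness of regular neighbourhoods of compact polyhedra in a smooth manifold, which ensures the final homeomorphism type is independent of the choices made and depends only on the combinatorial and numerical data recorded in $\Gdp{E}{\St}$.
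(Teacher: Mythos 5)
Your plan is correct in outline, but it follows a noticeably different route from the paper's. You work directly with the given rug function $\rho$: you flow along $-\nabla\rho$ (\L ojasiewicz) to see that $\{\rho\leqslant\varepsilon\}$ is a regular neighbourhood of $E$, then cut it into disk bundles over the punctured surfaces $E_i^0$ and standard plumbing boxes at the crossings, and finally invoke corner smoothing plus uniqueness of regular neighbourhoods to conclude. The paper instead never analyses the given $\rho$ near the crossings: it first extends the notion of rug function to semi-analytic functions and quotes Durfee (\cite[Proposition 3.5]{Dur83}) to get that the homeomorphism type of the boundary $\{\rho=\varepsilon\}$ is independent of the chosen (semi-analytic) rug function, and then builds by hand a tailor-made semi-analytic neighbourhood --- one rug function per component $E_i$, giving a tube $T_i$ whose boundary is the $\Sp^1$-bundle of Euler class $e_i$, plumbed together via suitable normalizations of these functions --- whose boundary is the graph manifold $M_{\Gdp{E}{\St}}$ by construction, in the spirit of \cite{Mum61}. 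The trade-off is exactly at the point you flag as the main obstacle: straightening the given neighbourhood to a genuine plumbing across the intersection points is the delicate step in your approach, and your appeal to regular-neighbourhood uniqueness is essentially a re-proof of the uniqueness statement the paper outsources to Durfee; conversely, the paper's route requires the (mild) extension of the rug-function framework to semi-analytic functions, but then the plumbing structure is visible by construction rather than extracted from $\rho$. Two small points to tighten if you pursue your version: the absence of critical values of $\rho$ in some interval $(0,\varepsilon_0)$ should be justified by the \L ojasiewicz gradient inequality or curve selection (not by genericity of $\varepsilon$), and the Euler number should be attached to the closed disk bundle over $E_i$ (equivalently to $\partial N_i$ after the boundary tori are capped off according to the plumbing), since over the punctured surface $E_i^0$ alone it is not well defined.
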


\begin{defn}
In this setting, the manifold $\{\rho\leqslant\varepsilon\}$ is called a \textbf{tubular neighbourhood} of $E$. 
\end{defn}

\begin{proof}[About the proof of Theorem \ref{thm corresp boundary plumbing graph}.]

This theorem can be seen as an extension of what is done in \cite{Mum61}, in the case of a configuration of complex analytic curves in a smooth complex surface. 

In our case, observe first that we can extend the definition of rug functions to semi-analytic functions $\rho$, and still have a unique homeomorphism type for the boundary of the neighbourhood $\{\rho\leqslant\varepsilon\}$ of $E$ for $\varepsilon>0$ small enough, following the proof of \cite[Proposition 3.5]{Dur83}. Now, one can build by hand a semi-analytic neighbourhood whose boundary is homeomorphic to the manifold $\Gdp{E}{\St}$. 

This is done by building a rug function for each irreducible component $E_i$ of $E$, providing a tubular neighbourhood $T_i$ of each $E_i$ whose boundary is an $\Sp^1$-bundle of Euler class $e_i$ over $E_i$. One then plumbs those bundles using appropriate normalizations of the rug functions, building a semi-analytic neighbourhood of $E$ which is homeomorphic to the desired graph manifold.
\end{proof}

\begin{rk}
Note that the decorations on the edges of the graph $\Gdp{E}{\St}$ depend on the orientations of the surfaces $E_i$. However, if the surfaces $E_i$ are only orientable, the different possible plumbing dual graphs still encode the same graph manifold, see move {\bf [R0]} of the plumbing calculus in \cite{Neu81}.
\end{rk}

We end this section with a tool that can be used to compute the self-intersections of the irreducible components of $E$:

\begin{defn}\label{def adapted function}\index{Adapted function}
Let $E=\bigcup\limits_{finite}E_i$ be a simple configuration of compact oriented real analytic surfaces in a $4$-dimensional real analytic manifold $\St$. A real analytic function $g\colon \St \rightarrow \C$ is called \textbf{adapted} to $E$ if 
\begin{enumerate}
\item $E_{tot}:=g^{-1}(0)$ is a simple configuration of oriented, not necessarily compact, real analytic surfaces, such that $E_{tot}\supset E$.
\item \begin{enumerate}
\item \label{mutiplicity gen} For any component $E_i$ of $E_{tot}$, $\forall ~ p\in E_i \setminus \bigcup\limits_{j \neq i}E_j$, there is a neighbourhood $U_p$ of $p$ in $\St$ and complex coordinates $(x_p,y_p)$ on $U_p$ such that $U_p\cap E_i=\{x_p=0\}$ and $n_i\in \N^*$ such that $$g=x_p^{n_i} \cdot \varphi$$ where $\varphi\colon U_p \rightarrow \C$ is a unit at $p$.

\item \label{multiplicity double} For any components $E_i$ of $E$, $E_j$ of $E_{tot}$, $\forall ~ P\in E_i\cap E_j$, there is a neighbourhood $U_p$ of $p$ in $\St$ and complex coordinates $(x_p,y_p)$ on $U_p$ such that $U_p\cap E_i=\{x_p=0\}$, $U_p\cap E_k=\{y_p=0\}$,  and $n_i, n_k\in \N^*$ such that $$g=x_p^{n_i}y_p^{n_j} \cdot \varphi$$ where $\varphi\colon U_p \rightarrow \C$ is a unit at $p$.
\end{enumerate}
\end{enumerate}
\end{defn}

\begin{defn}\label{def multiplicity}
In this setting, the integer $n_i$ of point \ref{mutiplicity gen} of Definition \ref{def adapted function}, independent of the point $p\in E_i \setminus \bigcup\limits_{j \neq i}E_j$, is called \textbf{the multiplicity of $g$ on $E_i$}, denoted $m_{E_i}(g)$.
\end{defn}

\begin{lemma}{\bf (Computing self-intersections.)}\label{lemma compute self-intersections}

Let $\St,E,g,E_{tot}$ be as in Definition \ref{def adapted function}. Let $E^{(1)}$ be an irreducible component of $E$. Then the self-intersection $e^{(1)}$ of the surface $E^{(1)}$ in $\St$ verifies the following condition: let $p_1,\cdots,p_n$ be the intersection points of $E^{(1)}$ with other components of $E_{tot}$, $p_j\in E_j\cap E^{(1)}$, where the same component may appear several times. Then $$n^{(1)}\cdot e^{(1)}+\sum\limits_{i=1}^{n} \epsilon_i \cdot n_i=0$$ where $\epsilon_i\in \{-1,+1\}$ refers to the sign associated to the intersection $p_i$ in the following sense: if $p\in E_i\cap E_j$, associate $+1$ to $p$ if and only if the combination of the orientations of $E_i$ and $E_j$ at $p$ provides the ambient orientation of $\St$.
\end{lemma}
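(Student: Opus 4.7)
My plan is to identify the self-intersection $e^{(1)}$ with the Euler number of the oriented rank-two normal bundle $N \to E^{(1)}$, endow $N$ with a canonical complex line bundle structure coming from the local complex coordinates supplied by Definition \ref{def adapted function}, and then read $\deg(N^*)^{\otimes n^{(1)}} = -n^{(1)} e^{(1)}$ off a Poincar\'e--Hopf count of the zeros of an explicit global section $\sigma$ of $(N^*)^{\otimes n^{(1)}}$ built directly out of $g$.

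The section $\sigma$ will be defined by the local recipe: around $q \in E^{(1)}$, with coordinates $(x_q, y_q)$ and unit $\varphi$ provided by Definition \ref{def adapted function} so that $E^{(1)} = \{x_q = 0\}$ and $g = x_q^{n^{(1)}} \varphi$, set $\sigma(q) := \varphi(0, y_q)\,(dx_q|_{E^{(1)}})^{\otimes n^{(1)}}$. First I will check that this is independent of the coordinate choice: under a change of normal coordinate $\tilde x_q = h\, x_q$ with $h$ a unit, $\varphi|_{x_q=0}$ is rescaled by $h^{-n^{(1)}}$ while $(dx_q)^{\otimes n^{(1)}}$ is rescaled by $h^{n^{(1)}}$, so the two scalings cancel and $\sigma$ patches globally. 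Clause (\ref{mutiplicity gen}) of Definition \ref{def adapted function} ensures that $\varphi$ is a unit on $E^{(1)} \setminus \{p_1, \dots, p_n\}$, so $\sigma$ is nowhere zero outside the intersection points.

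Next I shall compute the local index of $\sigma$ at each $p_j \in E_j \cap E^{(1)}$. By clause (\ref{multiplicity double}), in coordinates $(x, y)$ with $E^{(1)} = \{x=0\}$ and $E_j = \{y=0\}$, one has $g = x^{n^{(1)}} y^{n_j} \varphi$, whence $\sigma = y^{n_j}\,\varphi(0, y)\,(dx)^{\otimes n^{(1)}}$ along $E^{(1)}$, exhibiting an isolated zero of order $n_j$ at $p_j$. Measured in the complex orientation of the $y$-axis (equivalently of the fiber of $(N^*)^{\otimes n^{(1)}}$), the signed local index of this zero is $+n_j$; its sign with respect to the chosen orientation of $E^{(1)}$ is then $+1$ or $-1$ according to whether the local complex orientation on $E^{(1)}$ near $p_j$ agrees with the chosen one, and this comparison is precisely $\epsilon_j$ by its definition in the lemma statement. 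Summing over all zeros and applying Poincar\'e--Hopf, I will obtain
$$-n^{(1)} e^{(1)} \;=\; \deg (N^*)^{\otimes n^{(1)}} \;=\; \sum_{j=1}^{n} \epsilon_j\, n_j,$$
which is the asserted identity after rearrangement.

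The hard part will be the sign-bookkeeping in the last step: the topological definition of $\epsilon_j$ via the compatibility of the orientations of $T_{p_j} E_j \oplus T_{p_j} E^{(1)}$ with the ambient orientation of $\St$ must be matched on the nose with the analytic local index of a section of a complex line bundle over the oriented surface $E^{(1)}$. I expect to handle this by tracking, point by point, the local complex structure on $T_{p_j}\St$ induced by the coordinates of Definition \ref{def adapted function}, comparing its splitting $T_{p_j} E_j \oplus T_{p_j} E^{(1)}$ with the \emph{a priori} chosen orientations of $E_j$ and $E^{(1)}$, and checking that both sign conventions produce the same $\pm 1$.
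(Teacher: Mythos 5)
Your route is genuinely different from the paper's: the paper disposes of this lemma by a short homological argument (the cycle $\{g=0\}$, with its multiplicities, is homologous in a tube around $E$, relative to the boundary, to a nearby level $\{g=\delta\}$ which misses $E^{(1)}$, so its intersection number with $E^{(1)}$ vanishes), whereas you trivialize the problem bundle-theoretically: $g$ furnishes a section of $(N^*)^{\otimes n^{(1)}}$ over $E^{(1)}$ and Poincar\'e--Hopf counts its zeros. That is a legitimate and essentially self-contained alternative, and your verification that the section patches (the transition $\tilde x_q=h\,x_q$ acting by $h^{n^{(1)}}$ on $(dx_q)^{\otimes n^{(1)}}$ and by $h^{-n^{(1)}}$ on $\varphi$) is the right computation, modulo the small point that the form $\tilde x_q=h\,x_q$ with $h$ a complex unit must itself be extracted from the two expressions of $g$ (from $x_p^{n^{(1)}}\varphi=x_q^{n^{(1)}}\psi$ one gets that $(x_p/x_q)^{n^{(1)}}$ is a unit, hence so is $x_p/x_q$); this is also what makes the complex structure on the normal bundle globally well defined.

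The genuine gap is exactly the sign bookkeeping that you postpone, and as you sketch it it does not close. First, the identity $-n^{(1)}e^{(1)}=\deg (N^*)^{\otimes n^{(1)}}$ uses that the complex orientation of $N$ coming from the charts agrees with the orientation of $N$ induced by the chosen orientation of $E^{(1)}$ and the ambient orientation of $\St$; this global comparison factor never appears in your argument, and in the paper's construction the ambient orientation is the \emph{opposite} of the local complex one near points of type $\ominus$, so it cannot simply be ignored. Second, the local sign you compute at $p_j$ is the comparison of the chosen orientation of $E^{(1)}$ with the chart's complex orientation of $E^{(1)}$; this cannot be ``precisely $\epsilon_j$'' on the strength of the definitions alone, because $\epsilon_j$ also involves the chosen orientation of $E_j$ and of $\St$, neither of which enters your index count (reversing the orientation of $E_j$ flips $\epsilon_j$ but changes nothing in your computation). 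When one carries the bookkeeping through, your argument yields the formula with the sign ``chart complex orientation of $\St$ at $p_j$ versus the given ambient orientation''; identifying this with $\epsilon_j$ requires the orientation compatibilities of the configurations actually produced in the paper (at each double point both branches are simultaneously oriented by, or simultaneously against, the local complex structure, as in the orientation discussions for points of type $\oplus$ and $\ominus$). So the deferred ``hard part'' is not a routine check: it needs these compatibilities to be stated and used, and without them the asserted matching of signs is false as written.
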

\begin{proof}
The proof follows the standard argument in the holomorphic category. The difference of the two members of the equation is the intersection number of $E^{(1)}$ with the cycle defined by $g=0$. This cycle is homologous with that defined by a nearby level of $g$, which does not meet $E^{(1)}$ any more. The intersection number being invariant by homology, one gets the desired result. 

In order to make this argument rigorous, one has to work in convenient tubular neighborhoods of $E$ and to look at the cycles defined by the levels of $g$ in the homology of the tube relative to the boundary.
\end{proof}
\section{Graph coverings}\label{subs graph cover}

In this subsection we introduce the notion of graph covering, which will be useful when describing the method of computation of a plumbing graph for the manifold $\partial F$. Here, $\Gamma$ is a non-oriented graph, and $\VG,\EG$ denote respectively the set of vertices and the set of edges of $\Gamma$.

\begin{defn}
\index{$\Z$-action on a graph} (See \cite[1.3]{Nem00}.) We say that $\Z$ \textbf{acts on a graph $\Gamma$} if there are group actions $$a_\PV\colon \Z\times\VG\rightarrow \VG$$ and $$a_\PE\colon \Z\times\EG\rightarrow \EG$$ such that if $v_1$ and $v_2$ are the end-vertices of $e\in \EG$, then $a_\PV(1,v_1)$ and $a_\PV(1,v_2)$ are the end-vertices of $a_\PE(1,e)$.

%If $\Gamma_1$ and $\Gamma_2$ are both endowed with $\Z$-actions, a morphism $p\colon \Gamma_1 \rightarrow \Gamma_2$ is called \textbf{equivariant} if the maps $p_\PV$ and $p_\PE$ are both equivariant with respect to the actions of $\Z$, i.e. if they commute with the action of $\Z$. If, in addition, $p$ is an isomorphism, such a morphism is called an \textbf{equivariant isomorphism} of graphs.
\end{defn}

\begin{defn}\label{def ordre cyclique}
Let $F$ be a finite set. A \textbf{cyclic order} on $F$ is an automorphism of $F$ that generates a transitive group of automorphisms. That is, as a permutation it is a cycle.
\end{defn}

\begin{ex}
A $\Z$-action on a graph $\Gamma$ induces a cyclic order on each orbit, made either of edges or vertices, under this action. Reciprocally, the data of cyclic orders on sets partitioning $\VG$ and $\EG$, with the appropriate compatibility axioms, gives rise to a $\Z$-action on $\Gamma$.
\end{ex}

\begin{defn}
Let $\Gamma$ be a graph, endowed with the trivial action of $\Z$. A \textbf{$\Z$-covering}, or \textbf{cyclic covering}, of $\Gamma$, is a graph $G$ such that $\Z$ acts on $G$, together with an equivariant map $p \colon G\rightarrow \Gamma$, such that the restriction of the action of $\Z$ to any set of the type $p^{-1}(v)$ or $p^{-1}(e)$ is transitive.

In this setting, if $v\in \VG$ and $e\in \EG$, then the elements of $p^{-1}(v)$ are called the \textbf{vertices associated to $v$}, while the elements of $p^{-1}(e)$ are called the \textbf{edges associated to $e$}.
\end{defn}

\begin{rk}
Let $G$ be a cyclic covering of a graph $\Gamma$. For any $v\in \VG$ or $e\in \EG$, denote $n_v$, respectively $n_e$, the cardinal of $p^{-1}(v)$, respectively $p^{-1}(e)$. Then if $v_1,v_2\in \VG$ are the end-vertices of $e\in \EG$, there is $d_e\in \N^*$ such that $$n_e=d_e\cdot lcm(n_{v_1},n_{v_2}).$$
\end{rk}

\begin{defn}
A \textbf{covering data} for a graph $\Gamma$ is a system of positive integers $({\bf n_v},{\bf n_e})=\left\{\{n_v\}_{v\in \VG},\{n_e\}_{e\in \EG}\right\}$ such that for all $e\in \EG$ with end-vertices $v_1,v_2$, the number $n_e$ is a multiple of $lcm(n_{v_1},n_{v_2})$.
\end{defn}

\section{Hirzebruch-Jung chains}\label{appendix resol HJ}

We introduce here the Hirzebruch-Jung chains (or strings) that occur in the final step of our resolution of $(\Skt,0)$. For more details, one can consult \cite[III.5]{BPV84}, \cite[4.3]{NemSzi12}.

Let $a,b,c\in \N$. Recall that $(a,b,c)$ denotes $gcd(a,b,c)$. Suppose that $d:=(a,b,c)=1$. In these conditions, $(V,p):=\left(\{x^a=y^bz^c\},0\right)^N$ is an isolated singularity of complex surface. We describe here a graph of resolution of this singularity, decorated with the self-intersections of the irreducible components of the exceptional divisor and the multiplicities of the pullback of the function $g(x,y,z)=x^{n_1}y^{n_2}z^{n_3}$.

Denote $\delta:=\dfrac{a}{(a,b)(a,c)}$, and by $\alpha$ the unique integer in $[0,\delta-1]$ such that $$a~|~\alpha c (a,b)+b(a,c).$$

Let $$\frac{\delta}{\alpha}=k_1-\cfrac{1}{k_2-\cfrac{1}{\cdots-\cfrac{1}{k_l}}}$$ be the negative fraction expansion of $\delta / \alpha$, $k_i\geqslant 2$, $k_i\in \N$.

Define $\mu_{l+1}:=\dfrac{b\cdot n_1+a\cdot n_2}{(a,b)},\mu_{0}:=\dfrac{c\cdot n_1+a\cdot n_3}{(a,c)},\mu_1:=\dfrac{\alpha\cdot \mu_0+\mu_{l+1}}{\delta}$ and let $\mu_2,\cdots,\mu_l$ be defined by the relation $$\forall ~ 1\leqslant i \leqslant l, \mu_{i+1}=k_i\cdot\mu_i-\mu_{i-1}.$$

Then the graph of Figure \ref{fig:HJbambouxyz} is a graph of resolution of $(V,0)$. Left and right-hand arrows represent, respectively, the pullbacks of the $z$ and the $y$-axes. The numbers between parentheses are the multplicities of the pullback of the function $g(x,y,z)=x^{n_1}y^{n_2}z^{n_3}$ on the irreducible components of its total transform.

\begin{figure}[h]
\begin{center}

        \includegraphics[totalheight=1.4cm]{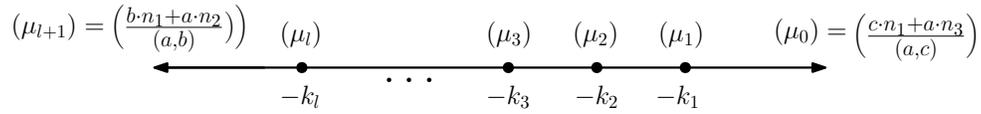}
    \caption{The string $Str(a;b,c|n_1;n_2,n_3)$.}
    \label{fig:HJbambouxyz}
\end{center}
\end{figure}

We denote by $Str^\oplus(a;b,c|n_1;n_2,n_3)$, resp. $Str^\ominus(a;b,c|n_1;n_2,n_3)$ the chain of Figure \ref{fig:HJbambouxyz} with each edge decorated by $\plus$, resp. $\minus$, and the self-intersection decorations removed.
\end{appendices}
\bibliographystyle{alpha}
\bibliography{biblio} 

\Adresses
\end{document}